\providecommand{\U}[1]{\protect\rule{.1in}{.1in}}
\newtheorem{theorem}{Theorem}
\newtheorem{definition}[theorem]{Definition}
\newtheorem{lemma}[theorem]{Lemma}
\newtheorem{proposition}[theorem]{Proposition}
\newtheorem{remark}[theorem]{Remark}
\newenvironment{proof}[1][Proof]{\noindent\textbf{#1.} }{\ \rule{0.5em}{0.5em}}
\begin{document}

\title{On a free boundary problem and minimal surfaces}
\author{Yong Liu\\School of Mathematics and Physics, \\North China Electric Power University, Beijing, China,\\Email: liuyong@ncepu.edu.cn
\and Kelei Wang\\School of Mathematics and Statistics, Wuhan Univeristy\\Email:wangkelei@whu.edu.cn
\and Juncheng Wei\\Department of Mathematics, \\University of British Columbia, Vancouver, B.C., Canada, V6T 1Z2\\Email: jcwei@math.ubc.ca}
\maketitle

\begin{abstract}
From minimal surfaces such as Simons' cone and catenoids, using
refined Lyapunov-Schmidt reduction method, we construct new solutions for a free
boundary problem whose free boundary has two components. In dimension $8$, using
variational arguments, we also obtain solutions which are global  minimizers of the
corresponding energy functional. This shows that Savin's theorem  \cite{S} is optimal.

\end{abstract}

\section{\bigskip Introduction}
In this paper, we are interested in  the following free
boundary problem$:$
\begin{equation}
\left\{
\begin{array}
[c]{l}%
\Delta u=0\text{ in }\Omega:=\left\{  -1<u<1\right\}  ,\\
\left\vert \nabla u\right\vert =1\text{ on }\partial\Omega.
\end{array}
\right.  \label{FB}%
\end{equation}
Here the domain $\Omega\subset\mathbb{R}^{n}$ is a priori unspecified and
$\partial\Omega$ is the free boundary.  Solutions of $\left(  \ref{FB}\right)
$ arise formally as critical points of the energy functional:%
\begin{equation}
J\left(  u\right)  :=\int\left[  \left\vert \nabla u\right\vert ^{2}%
+\chi_{\left(  -1,1\right)  }\left(  u\right)  \right]  .
\label{two phase problem}%
\end{equation}
In this variational formulation, the boundary condition $\left\vert \nabla
u\right\vert =1$ should be understood in some weak sense if the free boundary
$\partial\Omega$ is not regular enough. Problem $\left(  \ref{FB}\right)
$ can be regarded as a simplified version of the classical one-phase free
boundary problem:%
\begin{equation}
\left\{
\begin{array}
[c]{l}%
\Delta u=0\text{ in }\Omega:=\left\{  u>0\right\}  ,\\
\left\vert \nabla u\right\vert =1\text{ on }\partial\Omega.
\end{array}
\right.  \label{onephase}%
\end{equation}

\medskip

The regularity of the free boundary problems  actually has been a subject of extensive
studies, pioneered by the work of Caffarelli (see \cite{Alt,C2,Ca1,Ca2,Ca3} and the references therein). It is now known that in dimension $n\leq4$, the free boundary of a
solution to $\left(  \ref{onephase}\right)  $ has no singularity, provided
that it is an energy \textit{minimizer} (\cite{Alt,C1,Jerison1}). In fact, it
is conjectured that for $n\leq6,$ minimizers should have smooth free boundary.
On the other hand, in higher dimensions $n\geq7,$ an energy minimizing free
boundary may have singularities. To explain this, let us mention that by blow
up analysis, the regularity of the free boundary is essentially related to the
existence or nonexistence of minimizing cone. Let us consider the cone in
$\mathbb{R}^{n}$ given by (see \cite{C1})
\begin{equation}
\left\vert x_{n}\right\vert <\alpha_{n}\sqrt{x_{1}^{2}+...+x_{n}^{2}},
\label{cone}%
\end{equation}
where $\alpha_{n}$ is a dimensional constant choosen such that on this cone
there is a solution to the one-phase free boundary problem. It has been proved
(\cite{Jerison2}) that in dimension $n=7$ (Actually also for $n=9,11,13,15$
and hopefully for all $n\geq7$), the solution to $\left(  \ref{onephase}%
\right)  $ corresponding to the cone $\left(  \ref{cone}\right)  $ is a
minimizer for the energy functional. For $3\leq n\leq6,$ this solution is
already known to be unstable, thanks to the work of \cite{C1}. On the other
hand, if a solution to $\left(  \ref{onephase}\right)  $ is a minimizer and if
the free boundary is a priori a graph, then by the result of \cite{Jerison3},
this free boundary will be real analytic. It is worth pointing out that all
these regularity results are in many respect analogous to that of the minimal
surface theory, and these two subjects are closely related.

\medskip

In $\mathbb{R}^{2}$, Traizet (\cite{Traizet,Traizet2}) proved that there is a
one-to-one correspondence between solutions to $\left(  \ref{FB}\right)  $ and
$\left(  \ref{onephase}\right)  $ and certain type of minimal surfaces in
$\mathbb{R}^{3}.$ Hence at least in dimension two$,$ this problem is well
understood, although even for the minimal surfaces in $\mathbb{R}^{3},$ many
questions remain unanswered up to now. We also refer to \cite{Pacard},
\cite{Jerison5}, \cite{Ruiz} for related existence and classification results
for other types of free boundary problems. Now we emphasize that in higher
dimensions, the explicit correspondence between minimal surfaces and free
boundary problem is not available. However, in $\mathbb{R}^{9},$ it is proved
by Kamburov (\cite{Kam}) using sub and super solution method that there exists
a solution to $\left(  \ref{FB}\right)  $ where the free boundary is close to
two copies of the famous Bomberi-De Giorgi-Giusti minimal graph. His result
indicates that there should be deeper relation between minimal surface and the
free boundary problem (\ref{FB}). Here in this paper we would like to further explore
this relation by constructing solutions to $\left(  \ref{FB}\right)  $ based
on minimal surfaces.

\medskip

Notice that problem (\ref{FB}) can be considered as special case of  over-determined problems. In recent years the following so-called Serrin's overdetermined problem
\begin{equation}
\left\{
\begin{array}
[c]{l}%
\Delta u=f(u)\text{ in }\Omega:=\left\{  u>0\right\}  ,\\
u=0, \ \left\vert \nabla u\right\vert = Constant \text{ on }\partial\Omega.
\end{array}
\right.  \label{onephase}%
\end{equation}
has also received much attention.  We refer to \cite{F1, FV0, FMV, Manuel2, RS, Ruiz, Si, WW} and the references therein.

\medskip

Another motivation  for studying (\ref{FB}) is related to De Giorgi's conjecture. In 1978 De Giorgi conjectured that the only bounded  solution to
\begin{equation}
\label{AC}
\Delta u+ u- u^3=0 \ \mbox{in} \ {\mathbb{R}}^n
\end{equation}
 which is monotone in $x_n$  must be one dimensional (up to rotation and translation) at least in dimension $n\leq 8$. De Giorgi's conjecture
is a natural, parallel statement to Bernstein theorem for minimal graphs, which in its most general
form, due to Simons \cite{Simons}, states that any minimal hypersurface in $\mathbb{R}^n$, which is also a graph of a
function of $n-1$ variables, must be a hyperplane if $n \leq  8$. Strikingly, Bombieri, De Giorgi and
Giusti \cite{BDG} proved that this fact is false in dimension $n \geq 9$.

Great advance in De Giorgi's conjecture has been achieved in recent years, having been fully
established in dimensions $n = 2$ by Ghoussoub and Gui \cite{GG} and for $n = 3$ by Ambrosio and
Cabr´e \cite{AC}. A celebrated result by Savin \cite{Savin1} established its validity for $4 \leq n \leq 8$ under the following additional assumption
$$ \lim_{x_n \to \pm \infty} u(x^{'}, x_n)=\pm 1$$
(See Savin-Sciunzi-Valdinoci \cite{VSS} and  Farina-Valdinoci \cite{FV1, FV2} for generalizations.) Del Pino, Kowalczyk and Wei \cite{DKW} constructed a counterexample in dimensions $n\geq 9$.

\medskip

Replacing the monotonicity assumption by global minimality of energy, Savin proved that in dimensions $n\leq 7$ all global minimizers  to (\ref{AC}) are one-dimensional. We proved that Savin's result is optimal by constructing global minimizers in dimensional $8$ (\cite{Liu}). (Stable solutions are constructed in Pacard-Wei \cite{PW}.)

\medskip

In a recent paper \cite{S},  Savin also extended the De Giorgi type conjecture result to problems with more general nonlinearities including
$$ \Delta u= W_u (u)$$
where $W= (1-u^2)^{\alpha}, \alpha \geq 0$. $\alpha =0$ corresponds to  the problem (\ref{FB}). In particular he proved  global minimizers of (\ref{FB}) must be one-dimensional if $n\leq 7$.  One of our results below shows that this is optimal.

\medskip

The purpose of this paper is to establish the connection between minimal surfaces and problem (\ref{FB}). In particular we  shall construct new solutions to (\ref{FB}) by developing new gluing methods for overdetermined problems.  We know very little information about the solutions of (\ref{FB}) in dimensions $n\geq 3$. In dimension $2$ Traizet's characterization \cite{Traizet} reduces the problem to singly minimal surfaces in $\mathbb{R}^3$. In dimension $9$, Kambrunov's solution \cite{Kam}  is a monotone solution whose two components are approximately Bombieri-De Giorgi-Giusti graphs.  For $ 3\leq n \leq 8$ we know no solutions to (\ref{FB}). In this paper we establish a connection between minimal surfaces and solutions to (\ref{FB}) and thereby provide plenty of new solutions to (\ref{FB}). In addition, we shall prove the existence of global minimizers in $\mathbb{R}^8$ and execute the Jeriosn-Monneau program for problem (\ref{FB}).

\medskip

Rather than considering the most general minimal surfaces, we shall focus on two types of
classical minimal surfaces. The first type of minimal surfaces are the area
minimizing cones (minimizing hypersurfaces) in $\mathbb{R}^{n}$($n\geq8$). As
an example, let us consider the famous Simons' cone:%
\[
S:=\left\{  \left(  x_{1},...,x_{8}\right)  \in\mathbb{R}^{8}:\Sigma_{i=1}%
^{4}x_{i}^{2}=\Sigma_{i=5}^{8}x_{i}^{2}\right\}  .
\]
This is a minimal surface with one singularity at the origin. The fact that
Simons' cone is area minimizing has been proved in the classical work of
Bombieri-De Giorgi-Giusti \cite{BDG}. Using the minimizing property,
Hardt-Simon \cite{HS} was able to show that there exists a family of foliated
minimal surfaces $S_{\delta}^{+}$ lying on one side of the cone and is
asymptotic to the cone at infinity. Similarly, the other side of the cone is
also foliated by a family of minimal surfaces $S_{\delta}^{-}.$ Due to scaling
invariance, this family of surfaces $S_{\delta}^{\pm}$ can be obtained simply
as homothety of $S_{1}^{\pm},$ that is $S_{\delta}^{\pm}=\delta^{-1}S_{1}%
^{\pm}.$ Actually, Hardt-Simon proved more. They showed that the Simons' cone
is strictly area minimizing which implies that each surface $S_{\delta}^{\pm}$
approaches the cone at the slowest possible rate.

As we mentioned before, there should be similarities between the minimal
surfaces and free boundary problem. A natural question is whether there are
analogous solutions for the free boundary problem $\left(  \ref{FB}\right)  $
as the Simons' cone and its associated foliation. We answer this affirmatively.

\begin{theorem}
\label{simons}For each $\varepsilon$ small enough, there exists domain
$\Omega^{\varepsilon}$ close to the radius one tubular neighbourhood of
$S_{\varepsilon}^{+}$ and solution $u_{\varepsilon}$ to the free boundary
problem $\left(  \ref{FB}\right)  .$ Moreover, $u_{\varepsilon}$ is stable in
the sense that there exists a function $\Phi>0$ in $\Omega^{\varepsilon},$
and
\begin{equation}
\left\{
\begin{array}
[c]{l}%
\Delta\Phi=0,\text{ in }\Omega^{\varepsilon},\\
\Phi_{\nu}+H\Phi=0,\text{ on }\partial\Omega^{\varepsilon}.
\end{array}
\right.  \label{stable}%
\end{equation}
Here $\nu$ is the outward normal to $\partial\Omega^{\varepsilon}$ and $H$ is
the mean curvature of $\partial\Omega^{\varepsilon}.$
\end{theorem}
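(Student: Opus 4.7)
The natural one-dimensional profile for problem \eqref{FB} is the linear function $U(t)=t$ on $[-1,1]$, which is harmonic and satisfies $|U'|=1$ at both endpoints. Accordingly, the plan is to construct $u_\varepsilon$ via a Lyapunov-Schmidt reduction whose leading ansatz is the signed distance to $S_\varepsilon^+$, truncated to the slab of width $2$ around this surface. Parametrize the two sought components of the free boundary as normal graphs
\[
\Gamma_\pm = \bigl\{\, y + (\pm 1 + h_\pm(y))\,\nu(y) : y \in S_\varepsilon^+ \,\bigr\},
\]
where $h_\pm : S_\varepsilon^+ \to \mathbb{R}$ are small unknown functions. In Fermi coordinates $(y,t)$ along $S_\varepsilon^+$ the ambient Laplacian reads
\[
\Delta = \partial_t^2 - H_{\Sigma_t}(y)\,\partial_t + \Delta_{\Sigma_t},
\]
and since $S_\varepsilon^+ = \varepsilon^{-1} S_1^+$ its second fundamental form satisfies $|A_{S_\varepsilon^+}| = O(\varepsilon)$, so the slab of width $2$ lies well inside the injectivity radius of the normal exponential map and $H_{\Sigma_t}(y) = t\,|A_{S_\varepsilon^+}|^2(y) + O(\varepsilon^3)$. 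Thus $u_{\mathrm{app}}(y,t) := t$ is harmonic up to a bulk error of order $\varepsilon^2$, satisfies $|\nabla u_{\mathrm{app}}|=1$ exactly on the unperturbed walls $\{t=\pm 1\}$, and, after pulling back to the fixed slab, satisfies the full free boundary condition modulo terms at worst linear in $(h_\pm,\nabla h_\pm)$. A correction of the form $\tfrac{1}{2}|A_{S_\varepsilon^+}|^2(y)(t^2-1)$ can be added to kill the leading bulk error.

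Setting $u = u_{\mathrm{app}} + \phi$ and linearizing, the coupled system for $(\phi,h_+,h_-)$ on the fixed slab has the schematic form
\[
\begin{cases}
\Delta \phi = E(y,t) & \text{in } \{|t|<1\},\\
\partial_t \phi \mp J_{S_\varepsilon^+}[h_\pm] = F_\pm(y) & \text{on } \{t=\pm 1\},
\end{cases}
\]
where $J_{S_\varepsilon^+} = \Delta_{S_\varepsilon^+} + |A_{S_\varepsilon^+}|^2$ is the Jacobi operator of the Hardt-Simon leaf. Decomposing $\phi(y,t) = \phi_0(y) + \phi_1(y,t)$ with $\int_{-1}^{1}\phi_1\,dt = 0$, the problem reduces to (i) an ODE-in-$t$ problem for $\phi_1$ solvable pointwise in $y$, and (ii) a coupled elliptic system for $(\phi_0,h_+,h_-)$ on $S_\varepsilon^+$ whose principal part is $J_{S_\varepsilon^+}$. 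The crucial analytical input is a uniform-in-$\varepsilon$ invertibility of $J_{S_\varepsilon^+}$ in weighted H\"older spaces adapted to the conic asymptotics of $S_\varepsilon^+$. This is precisely where the strict area-minimality of Simons' cone enters: the Hardt-Simon foliation \cite{HS} provides a positive Jacobi field $Z := \partial_\delta S_\delta^+|_{\delta=\varepsilon}$ on $S_\varepsilon^+$, from which one derives strict stability and the required weighted a priori estimates, with the admissible decay rates dictated by the indicial roots of $J$ on Simons' cone. I expect this invertibility step, with weights handling simultaneously the transition region and the linearly growing conic ends, to be the main technical obstacle of the proof.

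With the linear theory in place the nonlinear remainder is a contraction on a small ball in the chosen weighted space and the contraction mapping principle produces $u_\varepsilon$ and the domain $\Omega^\varepsilon$. For stability, take $\Phi$ to be a positive harmonic function in $\Omega^\varepsilon$ whose leading Fermi-coordinate profile is $\Phi(y,t)\approx Z(y)$: since $Z$ lies in the kernel of $J_{S_\varepsilon^+}$, the identity $\Delta = \partial_t^2 - H\partial_t + \Delta_{\Sigma_t}$ together with the Robin condition $\Phi_\nu + H\Phi = 0$ reduces at leading order to $J_{S_\varepsilon^+} Z = 0$, and the higher-order corrections can be absorbed by a further small perturbation built from the same Jacobi operator inverse used above. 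Positivity of $\Phi$ is inherited from positivity of $Z$ together with the maximum principle in the slab, proving the stability condition \eqref{stable}.
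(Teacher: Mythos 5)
Your overall scheme (Fermi coordinates around the Hardt--Simon leaf, two interface functions $h_\pm$, reduction to equations on $S_\varepsilon^+$, invertibility of the Jacobi operator coming from the foliation, contraction mapping) is the same as the paper's, but the structure you assign to the reduced linear problem is wrong in a way that matters. The Neumann condition $\left\vert \nabla u\right\vert =1$ linearizes to $\partial_t\phi - f = \text{small}$ (Lemma \ref{l3}); no Jacobi operator appears in the boundary conditions. The operator $J$ enters only through the bulk source $\Delta w_h = J_{\Gamma_0}g + (\Delta_{\Gamma_0}f+\left\vert A\right\vert^2)t + \dots$, and after solving the Dirichlet problem in $t$ and imposing the two Neumann conditions the reduced system (\ref{fg1}) splits by parity: the difference of the boundary data yields a Jacobi equation for $g=(h_1+h_{-1})/2$, but the sum yields an equation for $f=(h_1-h_{-1})/2$ whose symbol is $\left\vert\xi\right\vert/\tanh\left\vert\xi\right\vert-2\sim\left\vert\xi\right\vert$, i.e.\ a first-order nonlocal operator of type $(-\Delta_{S_\varepsilon}+1)^{1/2}$, not a second-order operator with principal part $J$. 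Your decomposition $\phi=\phi_0(y)+\phi_1(y,t)$ with zero $t$-average and a ``coupled elliptic system whose principal part is $J_{S_\varepsilon^+}$'' therefore misdescribes the width mode, and the linear estimates must instead be built for the fractional operator (this is done in Proposition \ref{L2} via the explicit kernels $K_1,K_2$ and potential theory); as written, your linear step does not close, and this nonlocal feature is precisely one of the main difficulties the paper isolates. A minor related slip: the leading bulk error $-t\left\vert A\right\vert^2$ is odd in $t$, so your even correction $\tfrac12\left\vert A\right\vert^2(t^2-1)$ does not remove it; the odd/even bookkeeping also produces the cancellation at $t=\pm1$ that gives the extra decay needed for the fixed point.

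The stability argument also has a gap. Problem (\ref{stable}) is a homogeneous (kernel) problem for the linearized free boundary operator: $\Phi$ harmonic with a Robin condition on both components. A perturbation of $Z$ ``absorbing higher-order corrections by the same Jacobi inverse'' only solves inhomogeneous problems; it cannot by itself produce an exact nontrivial kernel element, and for a single solution there is no a priori reason such a $\Phi$ exists. The paper gets $\Phi$ from the structure of the construction: the solutions follow the leaves $S_\varepsilon^+$ of the Hardt--Simon foliation, hence form a smooth ordered family in $\varepsilon$, and $\Phi=\frac{d}{d\varepsilon}(w_h+\phi)$ is positive and satisfies (\ref{stable}). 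To repair your argument you should either establish smooth, monotone dependence of your family on the parameter and differentiate, or give a genuine existence proof for the exact Robin-harmonic kernel element near $Z$.
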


By this theorem, there are solutions whose nodal set is close to
$S_{\varepsilon}^{+}$ \textit{for }$\varepsilon$\textit{ small}. It is well
known that the family of minimal surfaces $S_{\delta}^{+},\delta\in
\mathbb{R},$ are all area minimizing. Therefore, it is natural to ask that
whether the solutions $u_{\varepsilon}$ are also minimizers of the energy
functional $J.$ We believe this is true, but here in this paper we shall only
give the following.

\begin{theorem}
\label{mini}There exists a nontrivial solution (not one dimensional) $U$ to
the free boundary problem $\left(  \ref{FB}\right)  $ in $\mathbb{R}^{8}$
which is also energy minimizing.
\end{theorem}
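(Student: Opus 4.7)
The plan is to adapt to the free boundary setting the construction of a non--one--dimensional global minimizer in $\mathbb{R}^{8}$ carried out by the first author for the Allen--Cahn equation \cite{Liu}, using the stable solution supplied by Theorem \ref{simons} (and its mirror image across Simons' cone) as barriers to control the free boundary uniformly up to infinity.

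Write $\mathbb{R}^{8}=\mathbb{R}^{4}\times\mathbb{R}^{4}$ with coordinates $(x,y)$, so that Simons' cone is $S=\{|x|=|y|\}$. For each large $R$, I would minimize $J$ on $B_{R}$ over $H^{1}$ functions with values in $[-1,1]$, subject to a fixed boundary condition on $\partial B_{R}$ that is $O(4)\times O(4)$--invariant, odd under swapping $x$ and $y$, and essentially equal to $\mathrm{sign}(|x|^{2}-|y|^{2})$. The existence of a minimizer $u_{R}$ in this class follows from the direct method together with the Alt--Caffarelli regularity theory for $J$. Averaging over the symmetry group, one may take $u_{R}$ itself to be $O(4)\times O(4)$--invariant and odd under the swap; in particular $u_{R}$ vanishes identically on $S$, and its free boundary has two components, exchanged by the swap, one on each side of $S$. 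The symmetry alone already forces $u_{R}$ to be non--one--dimensional.

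The core step is a uniform--in--$R$ trapping of the free boundary of $u_{R}$ in a bounded tubular neighbourhood of $S$. Theorem \ref{simons} provides, for each small $\varepsilon>0$, a stable solution $u_{\varepsilon}$ whose free boundary sits in a unit tubular neighbourhood of the Hardt--Simon leaf $S_{\varepsilon}^{+}$; composing with the $|x|\leftrightarrow|y|$ swap gives a companion solution $u_{\varepsilon}^{-}$ whose free boundary lies near $S_{\varepsilon}^{-}$. Using these, together with the positive Jacobi field $\Phi$ from (\ref{stable}) to manufacture first--variation--type super-- and sub--solutions, I would show by a sliding/comparison argument that no free boundary point of $u_{R}$ can land on a leaf $S_{\delta}^{\pm}$ with $\delta$ too small or too large: the minimality of $u_{R}$, combined with the strict area--minimality of $S$ from Hardt--Simon, would contradict the assumption of contact with a far or near leaf. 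Once the trapping is in place, passing $R\to\infty$ along a subsequence and invoking local compactness of minimizers in the Alt--Caffarelli setting yields a global minimizer $U$ of $J$ on $\mathbb{R}^{8}$. The trapping survives in the limit, so the free boundary of $U$ is sandwiched between two Hardt--Simon leaves; since no leaf is a hyperplane, $U$ is not one--dimensional.

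The main obstacle I foresee is precisely this uniform trapping. One must simultaneously preclude the free boundary from being attracted to the singular tip of $S$ (where the approximate solutions underlying Theorem \ref{simons} degenerate) and from escaping toward infinity along a near--hyperplane direction that would make $u_{R}$ look one--dimensional on arbitrarily large balls. Both directions require carefully calibrated barriers built from the Hardt--Simon foliation and the stability eigenfunction $\Phi$, together with a quantitative form of the stability inequality in (\ref{stable}) that passes to the $R\to\infty$ limit; this is the analogue in the free boundary context of the delicate barrier construction in \cite{Liu}.
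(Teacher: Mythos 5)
Your overall strategy (minimize on large symmetric domains, use the solutions produced from the Hardt--Simon foliation as sliding barriers to trap the free boundary, then pass to the limit) is indeed the paper's strategy, but two of your specific choices create genuine gaps. First, the odd symmetry under the swap $(x,y)\mapsto(y,x)$, $u\mapsto -u$, is both unjustified and dangerous for the statement you must prove. If you minimize only within the odd-equivariant class, the limit $U$ is a minimizer against odd competitors, not against all compactly supported perturbations, so the conclusion ``energy minimizing'' is exactly what is lost. If instead you minimize without constraint and try to symmetrize, the lattice trick (replacing a minimizer by $\min$ or $\max$ over group translates, which the paper uses in Lemma \ref{minimizer} to get $O(4)\times O(4)$ invariance) fails for a symmetry that flips the sign of $u$: $\min\{u,-u(\sigma\cdot)\}$ is again a minimizer but is not odd, so ``averaging over the symmetry group'' does not make $u_R$ odd. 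The paper avoids this entirely: it imposes only $O(4)\times O(4)$ invariance, chooses boundary data $b_a$ squeezed strictly between $u^-_{\varepsilon_0}$ and $u^+_{\varepsilon_0}$, and gets non-one-dimensionality from the trapping $u^-_{\varepsilon_0}<U<u^+_{\varepsilon_0}$ alone (no hyperplane fits in the region between the two leaves), so the odd symmetry is unnecessary. Note also that your datum $\mathrm{sign}(|x|^2-|y|^2)$ is \emph{not} between the two barrier solutions near the cone, so even the initial ordering needed to start the sweeping is not available with that boundary condition.

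Second, your trapping step skips the ingredient the paper identifies as the main difficulty: regularity. To run the sweeping/comparison argument with the ordered family $u^{\pm}_{\varepsilon}$, one must apply the strong maximum principle at a first-touching point, which requires knowing that the free boundary of the constrained minimizer $u_a$ is smooth; the paper proves this by blow-up and the Weiss monotonicity formula, reducing (thanks to the $O(4)\times O(4)$ symmetry) to minimizing cones in $\mathbb{R}^{2}$ and to axially symmetric cones in $\mathbb{R}^{5}$, where the Jerison--Savin classification of stable cones applies --- this is precisely where the dimension $8$ and the symmetry enter. Moreover, the touching point produced by the sliding argument must be excluded on the fixed boundary $\partial B_a$, which the paper does via the Karakhanyan--Kenig--Shahgholian result that the free boundary meets the fixed boundary tangentially. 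Your proposal mentions neither step, and the ``super- and sub-solutions manufactured from the Jacobi field $\Phi$'' are not needed (nor clearly constructible): the ordered family $u^{\pm}_{\varepsilon}$, $\varepsilon\le\varepsilon_{0}$, itself serves as the sweeping barrier. With the odd constraint removed, the boundary data placed between the two barriers, and the regularity and fixed-boundary ingredients added, your outline becomes the paper's proof.
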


With additional efforts, one can actually prove that for each $S_{\delta}%
^{+},$ there exists an energy minimizer whose nodal set is asymptotic to
$S_{\delta}^{+}$ at infinity. We will not pursue this in this paper. One can
compare this result with a similar result for the Allen-Cahn
equation \cite{Liu}.

Using the variational method of Jerison-Monneau \cite{JM}, we can construct monotone
solutions in $\mathbb{R}^{9}$ using this minimizer $U.$ This complements the
result of Kamburov \cite{Kam}, where the existence of monotone solutions is
established by sub and super solution method.

\begin{theorem}
\label{mono}There is a family of solutions in $\mathbb{R}^{9}$ to $\left(
\ref{FB}\right)  $ which are monotone in the $x_{9}$ direction.
\end{theorem}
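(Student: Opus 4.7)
The plan is to adapt the variational scheme of Jerison-Monneau \cite{JM}, as executed for the Allen-Cahn equation in \cite{Liu}, to the free boundary functional $J$. The input is the minimizer $U$ on $\mathbb{R}^{8}$ from Theorem \ref{mini}, whose nodal set has two components asymptotic to the Simons cone. Its trivial extension $\tilde U(x',x_9) := U(x')$ is already a global minimizer in $\mathbb{R}^{9}$, but it is translation invariant in $x_9$; the task is to break this symmetry to produce a nontrivial monotone solution.

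First, for large parameters $R, L$ I would minimize $J$ on the cylinder $\mathcal{C}_{R,L} = B_R \times (-L, L) \subset \mathbb{R}^{9}$ with Dirichlet data $+1$ on the top cap $B_R \times \{L\}$, $-1$ on the bottom cap $B_R \times \{-L\}$, and a fixed interpolating profile (for instance a cutoff of $\tilde U$) on the lateral boundary. The Alt-Caffarelli regularity theory then guarantees that the minimizer $v_{R,L}$ is Lipschitz and satisfies (\ref{FB}). Next, I would show that $v_{R,L}$ is monotone in $x_9$ by a sliding argument: for $h > 0$ the shifted function $w := v_{R,L}(\cdot, \cdot + h)$ dominates $v_{R,L}$ on the boundary of the overlap region by construction of the boundary data, and the Alt-Caffarelli-Friedman truncation trick applied to the minimizing pair $\max(v_{R,L}, w)$ and $\min(v_{R,L}, w)$ forces the pointwise inequality throughout. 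Strict monotonicity then follows from a Hopf argument at interior regular points of the free boundary.

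Finally, I would pass to the limit $R, L \to \infty$ along a subsequence. Uniform Lipschitz and density estimates together with the standard convergence theorem for minimizers of $J$ give local $C^{0,\alpha}$ compactness and produce a solution $V$ of (\ref{FB}) on $\mathbb{R}^{9}$ monotone in $x_9$. The family required by the statement is then obtained by translating $V$ in $x_9$ and by varying the lateral data. The decisive non-triviality step, namely that $V$ does not collapse to the one dimensional profile $\max(-1, \min(1, x_9))$, is extracted from a quantitative energy gap: inside $\mathcal{C}_{R,L}$ the trivial lift $\tilde U$ has strictly smaller energy than the one dimensional competitor, by an amount inherited from the fact that $U$ itself is minimizing and not one dimensional in $\mathbb{R}^{8}$, and this gap is preserved under blow down. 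The main obstacle is the monotonicity step, since the jump term $\chi_{(-1,1)}$ in $J$ interacts awkwardly with the max/min truncation at points where the free boundaries of $v_{R,L}$ and $w$ cross; this is resolved by combining the Alt-Caffarelli-Friedman two-phase-type comparison with the partial regularity of the free boundary guaranteed by minimality.
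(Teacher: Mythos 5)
Your overall strategy (minimize $J$ on large cylinders, force monotonicity by sliding, pass to the limit) is the right family of ideas, but the decisive step --- ruling out that the limit $V$ is the one--dimensional profile --- is exactly where the argument has a genuine gap. With your boundary data ($-1$ on the bottom cap, $+1$ on the top cap) the trivial lift $\tilde U(x',x_9)=U(x')$ is not an admissible competitor at all, since it does not attain the cap values, so the ``energy gap'' cannot even be set up as stated. Moreover the comparison you invoke is regime dependent: a vertical configuration modeled on $U$ costs on the order of $R^{7}L$ in $B_R\times(-L,L)$, while the horizontal one--dimensional slab costs on the order of $R^{8}$, so which is cheaper depends on how $R$ and $L$ tend to infinity; and even in the regime where the vertical configuration is cheaper, a comparison of two globally divergent energies gives no control on the local structure of the normalized limit along a subsequence of translations --- nothing in your argument prevents $v_{R,L}$ from converging, after normalization, to $\Theta(x_9-c)$ with $\Theta$ the one--dimensional solution. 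Preventing precisely this collapse is the whole difficulty of the Jerison--Monneau program, and it is not obtained by the soft compactness argument you describe.

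The paper avoids this by a different construction. It takes two \emph{ordered} global minimizers $v_1\le v_2$ in $\mathbb{R}^{8}$, with $v_2(x,y)=-v_1(y,x)$, and imposes lateral data interpolating in $x_9$ between $v_1$ (bottom) and $v_2$ (top), rather than $\pm 1$ on the caps. The monotone cylinder minimizers then converge, as $x_9\to\pm\infty$, to $v_1$ and $v_2$, and one gets the dichotomy of Proposition \ref{monotone}: either the limit is already a nontrivial monotone solution in $\mathbb{R}^{9}$, or for every $\delta\in\left[v_1(0),v_2(0)\right]$ there is a nontrivial global minimizer $v$ in $\mathbb{R}^{8}$ with $v(0)=\delta$. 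In the second case the paper deforms the one--dimensional solution $\Theta$ to $v$ through the boundary data $\Theta_t(x',x_9)=\Theta\left(tv(x')+(1-t)x_9\right)$, minimizes for each $t$, and uses the continuity of $t\mapsto\partial_{x_9}U_{t,R'}(0)$ (the Jerison--Monneau continuity argument) to extract a monotone solution that is not one dimensional. Note also that the paper works throughout in the $O(4)\times O(4)$--invariant class, which is what makes the regularity input (Jerison--Savin in the axially symmetric case, the De Silva--Jerison gradient bound) applicable; your sketch omits this symmetry restriction. To repair your proposal you would need to replace the cap data $\pm1$ and the energy--gap heuristic by an argument of this dichotomy--plus--continuity type, or by some other quantitative mechanism that excludes the one--dimensional limit.
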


Our second type of minimal surfaces will be the catenoids, which is a family
of classical minimal surfaces with finite total curvature. They are
rotationally symmetric and given explicitly by the equation
\[
x_{1}^{2}+x_{2}^{2}=\frac{1}{\varepsilon^{2}}\cosh^{2}\left(  \varepsilon
x_{3}\right)  .
\]
Here $\varepsilon>0$ is a parameter. In higher dimensions, we have analogous
codimension one minimal submanifold which we call higher dimensional
catenoids. To be more precise, let $\left(  x_{1},...,x_{n}\right)  $ be the
coordinate in $\mathbb{R}^{n}$ ($n>3$). Let $\phi$ be the solution of%
\[
\left\{
\begin{array}
[c]{l}%
\frac{\omega^{\prime\prime}}{1+\omega^{\prime2}}-\frac{n-2}{\omega}=0,\\
\omega\left(  0\right)  =1,\omega^{\prime}\left(  0\right)  =0.
\end{array}
\right.
\]
Then the surface $\mathcal{C}_{1}$ in $\mathbb{R}^{n}$ given by
\[
r:=\varepsilon\sqrt{x_{1}^{2}+...+x_{n-1}^{2}}=\omega\left(  x_{n}\right)
\]
is a minimal surface, called catenoid. We can also write it as
\[
x_{n}=\bar{\omega}\left(  r\right)  ,r\in\lbrack r_{0},+\infty).
\]
Then there are constants $c_{n},c_{n}^{\prime}$ such that
\[
x_{n}\sim c_{n}-c_{n}^{\prime}r^{3-n}.
\]
Actually a homothety of $\mathcal{C}_{1}$ is also a minimal surface, which we
denoted by $\mathcal{C}_{\varepsilon},$ which is then described by
\[
x_{n}=\bar{\omega}_{\varepsilon}\left(  r\right)  :=\frac{1}{\varepsilon}%
\bar{\omega}\left(  \varepsilon r\right)  .
\]
We refer to \cite{Tam} for more detailed properties on catenoids, including
their Morse index. Here we are interested in $\mathcal{C}_{\varepsilon}$ with
$\varepsilon$ small. In this case, the catenoid has a large waist.

\begin{theorem}
\label{catenoid}For $\varepsilon$ small enough, there exists a rationally
symmetric domain $\Omega^{\varepsilon}$ close to radius one tubular
neighbourhood of $\mathcal{C}_{\varepsilon}$ and a solution $u_{\varepsilon}$
to the free boundary problem $\left(  \ref{FB}\right)  .$
\end{theorem}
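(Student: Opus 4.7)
My plan is to follow the refined Lyapunov--Schmidt reduction method alluded to in the abstract, mirroring analogous constructions around catenoids for the Allen--Cahn equation. I would parametrize a tubular neighborhood of $\mathcal{C}_\varepsilon$ by Fermi coordinates $(y,t)\mapsto y+t\,\nu(y)$, where $y\in\mathcal{C}_\varepsilon$, $\nu$ is a chosen unit normal, and $t\in(-1,1)$, and take as approximate solution the one-dimensional profile $U_0(y,t):=t$ inside the width-one tube, extended by $\pm 1$ outside. Because $|A_{\mathcal{C}_\varepsilon}|=O(\varepsilon)$ away from the waist and the catenoid is asymptotic to two parallel hyperplanes at rate $r^{3-n}$, this $U_0$ is nearly harmonic in the tube and trivially satisfies $|\nabla U_0|=1$ on the nominal boundary components $\{t=\pm 1\}$.

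I then introduce two rotationally symmetric graph perturbations $h_\pm\colon\mathcal{C}_\varepsilon\to\mathbb{R}$ defining the true free-boundary components $\{y\pm(1+h_\pm(y))\nu(y):y\in\mathcal{C}_\varepsilon\}$ and solve the Dirichlet problem $\Delta u=0$, $u=\pm 1$, uniquely on the perturbed domain. The remaining free boundary condition $|\nabla u|=1$ then reduces to a coupled nonlinear system for $(h_+,h_-)$. In the natural symmetric/antisymmetric coordinates $h_s=\tfrac12(h_++h_-)$, $h_a=\tfrac12(h_+-h_-)$, this system decouples at the linear level into
\[
\mathcal{L}_\varepsilon h_a=F_a(\varepsilon,h_+,h_-),\qquad \mathcal{M}_\varepsilon h_s=F_s(\varepsilon,h_+,h_-),
\]
where $\mathcal{L}_\varepsilon:=\Delta_{\mathcal{C}_\varepsilon}+|A_{\mathcal{C}_\varepsilon}|^2$ is the Jacobi operator of the catenoid and $\mathcal{M}_\varepsilon$ is a lower-order operator arising from the width constraint. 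The right-hand sides $F_a,F_s$ are small in appropriate weighted H\"older norms because $\mathcal{C}_\varepsilon$ is minimal and $|A_{\mathcal{C}_\varepsilon}|^2$ decays.

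The principal obstacle lies in the linear theory for $\mathcal{L}_\varepsilon$. The higher-dimensional catenoid carries a nontrivial bounded Jacobi kernel generated by the $x_n$--translation mode (bounded with opposite constant asymptotes on the two ends) and the dilation mode (bounded with the same asymptote on both ends); restricting to rotationally symmetric functions at the outset kills the translation modes orthogonal to the axis. In a weighted H\"older space imposing decay strictly faster than $r^{3-n}$, $\mathcal{L}_\varepsilon$ becomes Fredholm with a two-dimensional cokernel spanned precisely by these two bounded Jacobi fields, and I need sharp invertibility estimates modulo this cokernel uniformly in small $\varepsilon$. To absorb the cokernel I introduce two modulation parameters --- an axial translation $c$ of $\mathcal{C}_\varepsilon$ and a scale correction $\varepsilon'$ of $\varepsilon$ --- and verify that the map sending $(c,\varepsilon')$ to the cokernel projection of the error is a local diffeomorphism near the origin.

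With the invertibility of $\mathcal{L}_\varepsilon$ modulo cokernel and the direct solvability of $\mathcal{M}_\varepsilon$ in place, I close the argument by the contraction mapping principle: the full nonlinear system becomes a fixed-point problem for $(h_+,h_-)$ in a small ball of the weighted space, coupled with the finite-dimensional matching equation in $(c,\varepsilon')$. The smallness of $\varepsilon$ provides the contraction. The hardest technical ingredient is the weighted linear analysis just described --- establishing sharp a priori estimates for $\mathcal{L}_\varepsilon^{-1}$ on the non-compact catenoid, which has a waist region of high curvature and two asymptotic ends of very slow decay, and verifying the nondegeneracy of the finite-dimensional reduction in $(c,\varepsilon')$.
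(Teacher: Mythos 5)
Your proposal follows the same overall architecture as the paper: Fermi coordinates around $\mathcal{C}_{\varepsilon}$, two boundary perturbations of the width-one tube, an exact Dirichlet solve in the perturbed domain (thereby avoiding any linearization around the merely Lipschitz one-dimensional profile), and a reduction of the condition $\left\vert \nabla u\right\vert =1$ to a coupled system for the mean displacement and the width, with the Jacobi operator governing the mean part and a contraction argument closing the construction. Where you genuinely diverge is in the linear theory, and the comparison is instructive. For the Jacobi operator the paper does not set up a Fredholm-plus-modulation scheme: working with functions even in $l$ (which already kills the odd axial-translation field), it inverts $J_{\mathcal{C}_{1}}$ directly, by variation of parameters as in \cite{Oscar}, on a weighted space enlarged by the one-dimensional deficiency space $\mathcal{D}$ spanned by the $\left\vert l\right\vert ^{3-n}$ mode; no orthogonality conditions, no parameters $(c,\varepsilon')$, and no nondegeneracy matrix are needed. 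Your modulation route is a legitimate alternative, but it imports extra machinery whose details (convergence of the pairings against non-decaying Jacobi fields, and the $n=3$ case where the ends and the dilation field behave logarithmically) would still have to be checked, whereas the paper's deficiency-space device sidesteps them. More importantly, you locate the ``hardest ingredient'' in the Jacobi inversion, while in the paper that step is essentially quoted; the genuinely new linear work is the second equation of the reduced system (the analogue of (\ref{fg1})), which is \emph{not} a local lower-order operator ``solved directly'': after Fourier transform in $l$ it is the nonlocal multiplier $\left\vert \xi\right\vert ^{2}p_{2,\xi}^{\prime}\left(  1\right)  -1=\frac{\left\vert \xi\right\vert }{\tanh\left\vert \xi\right\vert }-1$, a square-root-of-Laplacian type operator whose symbol degenerates quadratically at $\xi=0$, and its inversion with the weighted decay estimates (kernel singularities, the $x\tanh(\pi x)$ transform identity, Proposition \ref{L2} and its catenoid analogue) is where much of the paper's effort lies. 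Similarly, the equation for the mean part is really $J_{\Gamma_{0}}g$ equated to a Dirichlet-to-Neumann type operator (symbol $\sim\left\vert \xi\right\vert $) applied to the boundary errors, so decay for $g$ has to be extracted from principal-value estimates of that nonlocal operator, not just from the Jacobi inversion. So: your plan is viable and structurally faithful, but it understates the nonlocal boundary-operator analysis that constitutes the core technical content, and it replaces the paper's simpler deficiency-space inversion of the Jacobi operator by a heavier finite-dimensional reduction.
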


Now let us explain the main ideas of the proof.  The proofs of Theorems \ref{simons} and \ref{catenoid} are based on the infinite dimensional gluing methods developed in \cite{Manuel, Manuel2}.   In \cite{Oscar,Manuel}, entire
solutions for the Allen-Cahn equation have been constructed. The zero level
sets of the solutions lie close to certain nondegenerate minimal surfaces. To
construct these solutions, they used the method of infinite dimensional
Lyapunov-Schmidt reduction. More recently, in $\cite{Manuel2},$ an
over-determined problem was investigated using similar method. Here we develop new gluing methods for (\ref{FB}).
There are two main difficulties in performing gluing  methods for (\ref{FB}). The first one is that the one-dimensional solution, which is given by
\begin{equation}
 u_0 (x_1)= \left\{\begin{array}{l}
 -1, x_1 \leq -1;\\
 x_1, -1<x_1 <1; \\
 1, \ x_1\geq 1,
\end{array}
\right.
\end{equation}
is only continuous and is not differentiable. This means that one can not linearize the problem around this one dimensional profile. This is quite different from \cite{Oscar, Manuel, Manuel2}.  The second difficulty is that this is an over-determined problem and we have to adjust two interfaces.

\medskip

  To solve the problem $\left(  \ref{FB}\right)  ,$ we introduce a
pair of unknown functions $\left(  h_{1},h_{2}\right)  $ on a rescaled minimal
surface. Using these two functions, we define a perturbed domain $\Omega_{h}$
which will be very close to the radius one tubular neighbourhood
$\mathcal{N}_{1}$ of the minimal surface$.$ The functions $h_{1}$ and $h_{2}$
measures the deviation of $\Omega_{h}$ to $\mathcal{N}_{1}.$ Next, we define
suitable approximate solutions for $\left(  \ref{FB}\right)  $ on $\Omega_{h}%
$. We analyze in detail the differences between this approximate solution and
the harmonic function in $\Omega$ with Dirichlet boundary condition. In the
last step, we use fixed point argument to show that one can find functions
$h_{1}$ and $h_{2}$ such that our problem is solvable and we can get a
solution $u.$ In this step, we show that to match the required Neumann
boundary condition, we need to analyze the solvability and a priori estimate
of a system of equations for the function $h_{1},h_{2}$. (See (\ref{fg1}).) It turns out that  one of them
reduces to the analyze of the Jacobi operator on the minimal surface
\begin{equation}
 \Delta_M h+|A|^2 h= f
 \end{equation}
but the other problem is of fractional differential operator
\begin{equation}
 (-\Delta_M +1)^{\frac{1}{2}} h= f.
 \end{equation}

\medskip

 We
remark that the family of solutions constructed from the Simons' cone are
ordered and hence stable, while the solutions arising from catenoids are
unstable.

\medskip

To prove Theorems \ref{mini} and \ref{mono}, we first extend the construction of Jerison-Monneau \cite{JM} and follow the variational approach in \cite{Liu} to construct minimizers in
$\mathbb{R}^{8}$ and monotone solutions in $\mathbb{R}^{9}.$  The main difficulty is the regularity of the solutions. To this end, we use axial symmetry of the solutions and  also  make use of classical regularity result of Weiss \cite{Weiss2, Weiss3} as well as recent regularity results of
Jerison-Savin \cite{Jerison1}.

\medskip

\textit{Acknowledgement.} The research of J. Wei is partially supported by
NSERC of Canada. Part of the paper was finished while Y. Liu was visiting the
University of British Columbia in 2016. He appreciates the institution for its
hospitality and financial support. K. Wang is supported by \textquotedblleft
the Fundamental Research Funds for the Central Universities". Y. Liu is
partially supported by the Fundamental Research Funds for the Central
Universities 13MS39.

\section{Solutions from Simons' cone}

\subsection{Preliminary on Simons' cone and the associated foliation}

Let us first of all recall some basic facts about the geometry of the Simons'
cone. Throughout the paper we shall use $S^{k}\left(  \rho\right)  $ to denote
the radius $\rho$ sphere in $\mathbb{R}^{k+1}.$ In the manifold $S^{7}\left(
1\right)  ,$ we shall consider the codimension one submanifold
\[
\Lambda:=S^{3}\left(  \rho\right)  \times S^{3}\left(  \rho\right)  ,
\]
where
\[
\rho=\sqrt{\frac{1}{2}}.
\]
The induced metric on $\Lambda$ is given by $g^{\ast}:=\rho^{2}g_{1}+\rho
^{2}g_{2},$ where $g_{1},g_{2}$ are the metric on the two copies $S^{3}\left(
1\right)  .$ The Simons cone is defined to be
\[
S:=\left\{  rX\in\mathbb{R}^{8}:r\in\left(  0,+\infty\right)  ,X\in
\Lambda\right\}  .
\]
One can verify that this is a minimal hypersurface in $\mathbb{R}^{8}.$ The
induced metric tensor on $S$ is then given by
\[
dr^{2}+r^{2}g^{\ast}.
\]
For a codimension one submanifold $M$ in $\mathbb{R}^{n},$ with the induced
metric, we shall use $J_{M}$ to denote its Jacobi operator, which explicitly
has the form
\[
J_{M}=\Delta_{M}+\left\vert A\right\vert ^{2},
\]
where $\left\vert A\right\vert ^{2}=\Sigma_{i=1}^{n-1}k_{i}^{2}$ is the
squared norm of the second fundamental form of $M,$ with $k_{i}$ being the
principle curvatures of $M.$ The Jacobi operator about $S$ is then given by
\[
J_{S}=\Delta_{S}+\left\vert A\right\vert ^{2}=\partial_{r}^{2}+\frac{6}%
{r}\partial_{r}+\frac{\Delta_{g^{\ast}}+6}{r^{2}}.
\]

The set $\mathbb{R}^{8}\backslash S$ has two components$.$ Each component is
foliated by a family of smooth minimal hypersurfaces $S_{\varepsilon}^{\pm}$
which are asymptotic to $S$ at infinity. We can choose $S_{1}$ to be the
surface having the form%
\[
S_{1}\backslash B_{r_{0}}=\left\{  X+\eta_{0}\left(  X\right)  \nu,X\in
S\right\}  ,
\]
where $\nu$ is a choice of the unit normal at $S,$ and $\eta_{0}\left(
X\right)  =\left\vert X\right\vert ^{-2}+o\left(  \left\vert X\right\vert
^{-2}\right)  .$ Then $S_{\varepsilon}=\varepsilon^{-1}S_{1}.$

Let $x=\sqrt{x_{1}^{2}+...+x_{4}^{2}},$ $y=\allowbreak\sqrt{x_{5}%
^{2}+...+x_{8}^{2}}.$ We can write the standard metric on $\mathbb{R}^{8}$ in
the polar coordinate as
\[
dx^{2}+x^{2}d\theta^{2}+dy^{2}+y^{2}d\bar{\theta}^{2},
\]
where $d\theta^{2}$ and $d\bar{\theta}^{2}$ represents the metric tensor on
the unit three-dimensional sphere $S^{3}\left(  1\right)  .$ Suppose in the
$\left(  x,y\right)  $ coordinate $S_{\delta}$ is described by $y=\varphi
_{\delta}\left(  x\right)  $ for a monotone function $\varphi_{\delta},$ then
the metric tensor on $S_{\delta}$ is
\[
\left[  1+\varphi_{\delta}^{\prime}{}^{2}\left(  x\right)  \right]
dx^{2}+\varphi_{\delta}^{2}\left(  x\right)  d\bar{\theta}^{2}+x^{2}%
d\theta^{2}.
\]
Let us introduce the arc length variable $l$ by the formula%
\[
l=\int_{0}^{x}\sqrt{1+\varphi_{\delta}^{\prime2}\left(  t\right)  }dt.
\]
Then the metric $\mathtt{g}_{\delta}$ on $S_{\delta}$ also read as
\[
dl^{2}+\varphi_{\delta}^{2}\left(  x\right)  d\theta^{2}+x^{2}d\bar{\theta
}^{2}.
\]
Note that $\det\mathtt{g}_{\delta}=\varphi_{\delta}^{6}\left(  x\right)
x^{6}.$ Let $\eta$ be a function on $S_{\delta}$ which is invariant under the
action of the group $O\left(  4\right)  \times O\left(  4\right)  .$ The
Laplacian operator on $S_{\delta}$ acting on function $\eta$ has the form%
\begin{align}
\Delta_{S_{\delta}}\eta &  =\frac{1}{\sqrt{\det\mathtt{g}_{\delta}}}%
\partial_{i}\left(  \sqrt{\det\mathtt{g}_{\delta}}\mathtt{g}_{\delta}%
^{i,j}\partial_{j}\eta\right) \nonumber\\
&  =\frac{d^{2}\eta}{dl^{2}}+\frac{\frac{d\left[  \varphi_{\delta}^{3}\left(
x\right)  x^{3}\right]  }{dl}}{\varphi_{\delta}^{3}\left(  x\right)  x^{3}%
}\frac{d\eta}{dl}\nonumber\\
&  =\frac{d^{2}\eta}{dl^{2}}+\left(  \frac{3}{x}+\frac{3\varphi_{\delta
}^{\prime}}{\varphi_{\delta}}\right)  \frac{dx}{dl}\frac{d\eta}{dl}.
\label{L1}%
\end{align}

\subsection{Analysis of the approximate solutions}

We will construct solutions based on the minimal hypersurfaces $S_{\varepsilon
}$ where $\varepsilon>0$ is sufficiently small. Let us choose a unit normal
$\nu$ for the codimension one manifold $S_{\varepsilon}.$ Let $h_{1},h_{-1}\in
C_{loc}^{2,\alpha}\left(  S_{\varepsilon}\right)  ,$ small in certain sense.
For each function $\eta$ defined on $S_{\varepsilon},$ we set
\[
\Gamma_{\eta}:=\left\{  X+\eta\left(  X\right)  \nu\left(  X\right)  :X\in
S_{\varepsilon}\right\}  .
\]
Although $\Gamma_{\eta}$ depends also on $\varepsilon,$ we will not make this
dependence explicit in the notation. We establish a Fermi coordinate in a
tubular neighbourhood of $S_{\varepsilon}.$ By $s$ we denote the signed
distance of a point to $S_{\varepsilon}.$ Slightly abusing the notation,
define
\[
\Gamma_{s}:=\left\{  X+s\nu\left(  X\right)  :X\in S_{\varepsilon}\right\}  .
\]
Note that for $\varepsilon$ small, this is well defined and $\Gamma_{s}$ is
smooth, for all $|s|<1$.

Let us consider the region $\Omega$ trapped between the surfaces
$\Gamma_{-1+h_{-1}}$ and $\Gamma_{1+h_{1}}.$ For each pair of functions
$h=\left(  h_{-1},h_{1}\right)  ,$ we shall define an approximate solution
$w_{h}$ in $\Omega:$%
\[
w_{h}\left(  s,l\right)  =\frac{s-g\left(  l\right)  }{1+f\left(  l\right)
},
\]
where
\begin{align*}
f  &  =\frac{h_{1}-h_{-1}}{2},\\
g  &  =\frac{h_{1}+h_{-1}}{2}.
\end{align*}
Note that in the current situation, the range of $l$ is $[0,+\infty).$ With
this definition, $w_{h}$ satisfies the boundary condition:%

\[
w_{h}=\left\{
\begin{array}
[c]{l}%
-1,\text{ on }\Gamma_{-1+h_{-1}},\\
1,\text{ on }\Gamma_{1+h_{1}}.
\end{array}
\right.
\]
It will be convenient for us to introduce a new variable
\[
t=\frac{s-g\left(  l\right)  }{1+f\left(  l\right)  }.
\]
Then the domain $\Omega_{h}$ can be parameterized by $\left(  l,t\right)  $
with $t\in\left[  -1,1\right]  .$

Let us use $H_{M}$ to denote the mean curvature of a codimension one
submanifold $M.$ The formula of Laplacian operator in the Fermi coordinate
(see \cite{DW}) tells us that
\begin{align*}
\Delta w_{h}\left(  s,l\right)   &  =\Delta_{\Gamma_{s}}w_{h}+\partial_{s}%
^{2}w_{h}-H_{\Gamma_{s}}\partial_{s}w_{h}\\
&  =\Delta_{\Gamma_{s}}w_{h}-\frac{H_{\Gamma_{s}}}{1+f}.
\end{align*}
We need to understand the main order of these terms.

\begin{lemma}
\label{l1}We have
\[
\Delta_{\Gamma_{0}}w_{h}=-\Delta_{\Gamma_{0}}g-t\Delta_{\Gamma_{0}}f+E_{1},
\]
where
\[
E_{1}=-tf\Delta_{\Gamma_{0}}f+\Delta_{\Gamma_{0}}\left(  fg\right)
-g\Delta_{\Gamma_{0}}f+\Delta_{\Gamma_{0}}\left[  \left(  s-g\right)
\frac{f^{2}}{1+f}\right]  .
\]

\end{lemma}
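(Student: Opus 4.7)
The identity is a purely algebraic one, arising from how $\Delta_{\Gamma_{0}}$ sees the ratio $w_{h}=(s-g)/(1+f)$ when the normal coordinate $s$ is held fixed. My plan is to rewrite $w_{h}$ as a polynomial in $f$ plus an exact remainder, apply $\Delta_{\Gamma_{0}}$ termwise, and then re-express the resulting coefficient of $\Delta_{\Gamma_{0}}f$ in terms of the variable $t$.

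First I would use the identity
\begin{equation*}
\frac{1}{1+f} \;=\; 1 - f + \frac{f^{2}}{1+f},
\end{equation*}
which recasts the approximate solution as
\begin{equation*}
w_{h} \;=\; (s-g) \;-\; (s-g)f \;+\; (s-g)\frac{f^{2}}{1+f}.
\end{equation*}
Since $s$ is independent of the coordinates along $\Gamma_{0}$ in the Fermi frame, $\Delta_{\Gamma_{0}}$ kills it, and applying the Laplacian term by term gives
\begin{equation*}
\Delta_{\Gamma_{0}} w_{h} \;=\; -\Delta_{\Gamma_{0}} g \;-\; s\,\Delta_{\Gamma_{0}} f \;+\; \Delta_{\Gamma_{0}}(fg) \;+\; \Delta_{\Gamma_{0}}\!\left[(s-g)\frac{f^{2}}{1+f}\right],
\end{equation*}
where I used $\Delta_{\Gamma_{0}}(sf)=s\,\Delta_{\Gamma_{0}}f$ together with $\Delta_{\Gamma_{0}}(gf)=\Delta_{\Gamma_{0}}(fg)$.

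The second step is to trade the constant coefficient $s$ for the variable $t$. From the defining relation $t=(s-g)/(1+f)$ I get $s=t+tf+g$, hence
\begin{equation*}
-s\,\Delta_{\Gamma_{0}}f \;=\; -t\,\Delta_{\Gamma_{0}}f \;-\; tf\,\Delta_{\Gamma_{0}}f \;-\; g\,\Delta_{\Gamma_{0}}f.
\end{equation*}
Substituting back isolates the leading term $-t\,\Delta_{\Gamma_{0}}f$ and collects the remaining pieces into exactly the claimed remainder $E_{1}$.

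There is no analytic obstacle, only bookkeeping: the point to watch is that $s$ and the tangential coordinates really are independent in the Fermi chart, so that $\Delta_{\Gamma_{0}}$ treats $s$ as a frozen parameter, and that the piece $(s-g)f^{2}/(1+f)$ is genuinely quadratic in $f$, which is what makes $E_{1}$ lower order once the subsequent smallness assumptions on $h_{\pm 1}$ (hence on $f,g$) are imposed. These two features are exactly what the next step of the reduction will exploit.
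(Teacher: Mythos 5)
Your proposal is correct and follows essentially the same route as the paper: expand $\tfrac{1}{1+f}=1-f+\tfrac{f^{2}}{1+f}$, apply $\Delta_{\Gamma_{0}}$ with $s$ frozen, and then substitute $s=t(1+f)+g$ to convert the coefficient $-s\,\Delta_{\Gamma_{0}}f$ into $-t\,\Delta_{\Gamma_{0}}f$ plus the remainder terms of $E_{1}$. No gaps; this matches the paper's computation.
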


\begin{remark}
$E_{1}$ can be regarded as a perturbation term.
\end{remark}

\begin{proof}
Having in mind that $f,g$ are small, we write
\begin{align*}
w_{h}  &  =\frac{s-g\left(  l\right)  }{1+f\left(  l\right)  }=\left(
s-g\right)  \left(  1-f+\frac{f^{2}}{1+f}\right) \\
&  =s-g-sf+gf+\left(  s-g\right)  \frac{f^{2}}{1+f}\text{. }%
\end{align*}
We then compute
\[
\Delta_{\Gamma_{0}}w_{h}=-\Delta_{\Gamma_{0}}g-s\Delta_{\Gamma_{0}}%
f+\Delta_{\Gamma_{0}}\left(  fg\right)  +\Delta_{\Gamma_{0}}\left[  \left(
s-g\right)  \frac{f^{2}}{1+f}\right]  .
\]
Inserting the relation $s=t\left(  1+f\right)  +g$ into the left hand side, we
get%
\begin{align*}
\Delta_{\Gamma_{0}}w_{h}  &  =-\Delta_{\Gamma_{0}}g-\left[  t\left(
1+f\right)  +g\right]  \Delta_{\Gamma_{0}}f+\Delta_{\Gamma_{0}}\left(
fg\right)  +\Delta_{\Gamma_{0}}\left[  \left(  s-g\right)  \frac{f^{2}}%
{1+f}\right] \\
&  =-\Delta_{\Gamma_{0}}g-t\Delta_{\Gamma_{0}}f-tf\Delta_{\Gamma_{0}}%
f+\Delta_{\Gamma_{0}}\left(  fg\right)  -g\Delta_{\Gamma_{0}}f+\Delta
_{\Gamma_{0}}\left[  \left(  s-g\right)  \frac{f^{2}}{1+f}\right]  .
\end{align*}
This finishes the proof.
\end{proof}

Let us use $k_{i},i=1,...,6$ to denote the principle curvatures of
$S_{\varepsilon}.$

\begin{lemma}
\label{l2}We have the following formula:%
\[
\frac{H_{\Gamma_{s}}}{1+f}=t\left\vert A\right\vert ^{2}+g\left\vert
A\right\vert ^{2}+E_{2},
\]
where
\[
E_{2}=\frac{1}{1+f}\sum\limits_{i=1}^{6}\frac{s^{2}k_{i}^{3}}{1-sk_{i}}%
-\frac{fg\left\vert A\right\vert ^{2}}{1+f}.
\]

\end{lemma}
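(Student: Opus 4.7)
The plan is to expand the mean curvature of the parallel hypersurface $\Gamma_s$ as a power series in $s$, using the principal curvatures $k_i$ of $S_\varepsilon$, then substitute the Fermi-coordinate identity $s = t(1+f) + g$ and divide by $1+f$.

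First, I would recall the classical formula for the mean curvature of a parallel hypersurface in Euclidean space: if $\Gamma_0 = S_\varepsilon$ has principal curvatures $k_i$ at a point $X$, then the corresponding point on $\Gamma_s = \{X + s\nu(X)\}$ has principal curvatures $k_i/(1-sk_i)$, so
\begin{equation*}
H_{\Gamma_s} = \sum_{i=1}^{6} \frac{k_i}{1-sk_i}.
\end{equation*}
Performing the elementary identity
\begin{equation*}
\frac{k_i}{1-sk_i} = k_i + sk_i^2 + \frac{s^2 k_i^3}{1-sk_i},
\end{equation*}
summing over $i$, and invoking minimality of $S_\varepsilon$ (so that $\sum_i k_i = H_{S_\varepsilon} = 0$) together with $\sum_i k_i^2 = |A|^2$, I obtain
\begin{equation*}
H_{\Gamma_s} = s|A|^2 + \sum_{i=1}^{6} \frac{s^2 k_i^3}{1-sk_i}.
\end{equation*}

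Next, I would substitute $s = t(1+f) + g$, which is just the definition of the $(l,t)$ parameterization. This gives
\begin{equation*}
H_{\Gamma_s} = \bigl[t(1+f) + g\bigr]|A|^2 + \sum_{i=1}^{6} \frac{s^2 k_i^3}{1-sk_i}.
\end{equation*}
Dividing by $1+f$ then yields
\begin{equation*}
\frac{H_{\Gamma_s}}{1+f} = t|A|^2 + \frac{g|A|^2}{1+f} + \frac{1}{1+f}\sum_{i=1}^{6} \frac{s^2 k_i^3}{1-sk_i}.
\end{equation*}
To match the statement, I would rewrite the middle term using $\frac{g}{1+f} = g - \frac{fg}{1+f}$, which produces exactly $g|A|^2 + E_2$ with $E_2$ as claimed.

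Honestly there is no real obstacle here; the lemma is a book-keeping identity whose only substantive input is the minimality condition $\sum k_i = 0$. The one thing to be careful about is making sure the expansion is valid, i.e.\ that $|sk_i|<1$ uniformly, which is guaranteed because $|s|<1$ on $\Omega_h$ and $|A|^2 = O(\varepsilon^2)$ on $S_\varepsilon$ for $\varepsilon$ small; in particular the remainder $\sum s^2 k_i^3/(1-sk_i)$ is genuinely of higher order and may legitimately be absorbed into $E_2$.
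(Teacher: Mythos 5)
Your argument is correct and is essentially the paper's own proof: both expand $H_{\Gamma_s}=\sum_i k_i/(1-sk_i)$ via the identity $k_i/(1-sk_i)=k_i+sk_i^2+s^2k_i^3/(1-sk_i)$, use minimality $\sum_i k_i=0$, substitute $s=t(1+f)+g$, and split $\frac{g}{1+f}=g-\frac{fg}{1+f}$ to produce $E_2$. Your added remark on the validity of the expansion ($|sk_i|<1$ for $|s|<1$ and $\varepsilon$ small) is a harmless extra observation not spelled out in the paper.
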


\begin{proof}
By a well known formula (see \cite{DW}),
\[
H_{\Gamma_{s}}=\sum\limits_{i=1}^{6}\frac{k_{i}}{1-sk_{i}}=\sum\limits_{i=1}%
^{6}k_{i}+\sum\limits_{i=1}^{6}sk_{i}^{2}+\sum\limits_{i=1}^{6}\frac
{s^{2}k_{i}^{3}}{1-sk_{i}}.
\]
Recall that $\sum\limits_{i=1}^{6}k_{i}=H_{\Gamma_{0}}=0.$ Hence
\begin{align*}
\frac{H_{\Gamma_{s}}}{1+f}  &  =\frac{\left\vert A\right\vert ^{2}}%
{1+f}\left[  \left(  1+f\right)  t+g\right]  +\frac{1}{1+f}\sum\limits_{i=1}%
^{6}\frac{s^{2}k_{i}^{3}}{1-sk_{i}}\\
&  =t\left\vert A\right\vert ^{2}+g\left\vert A\right\vert ^{2}-\frac
{fg\left\vert A\right\vert ^{2}}{1+f}+\frac{1}{1+f}\sum\limits_{i=1}^{6}%
\frac{s^{2}k_{i}^{3}}{1-sk_{i}}.
\end{align*}
The proof is thus completed.
\end{proof}

We seek a solution $u$ to the free boundary problem $\left(  \ref{FB}\right)
$ in the form $u=w_{h}+\phi.$ Here we require $\phi=0$ on $\partial\Omega
_{h}.$ Let us now analyze the boundary condition $\left\vert \nabla
u\right\vert =1$ on $\partial\Omega_{h}.$ Suppose in the $\left(
l,\theta,\bar{\theta},s\right)  $ coordinate the metric tensor $\mathfrak{g}$
in a tubular neighbourhood of $S_{\varepsilon}$ has matrix with entries
$\mathfrak{g}_{i,j}$ and its inverse matrix has entries $\mathfrak{g}^{i,j}.$
Since we are working in the Fermi coordinate, the entries in the last column
and row are all zero, except the rightmost entry on the last row. We omit the
subscript $h$ in $w_{h}$ and write it as $w.$

\begin{lemma}
\label{l3}The condition $\left\vert \nabla u\right\vert =1$ on $\Gamma
_{i+h_{i}}$ is equivalent to%
\[
\partial_{t}\phi-f=E_{3,i}.
\]
Here for $i=-1,1,$ $E_{3,i}$ is defined on $\Gamma_{i+h_{i}}$ to be
\[
-\frac{1}{2}\left(  1+\mathfrak{g}^{1,1}h_{i}^{\prime2}\right)  \left(
\partial_{t}\phi\right)  ^{2}+\frac{\mathfrak{g}^{1,1}h_{i}^{\prime}}%
{1+f}\partial_{t}\phi+\frac{1}{2}f^{2}-\frac{1}{2}\mathfrak{g}^{1,1}\left(
g^{\prime}+tf^{^{\prime}}\right)  ^{2}.\text{ }%
\]

\end{lemma}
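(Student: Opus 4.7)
The plan is to compute $|\nabla u|^2$ at a point of $\Gamma_{i+h_i}$ using coordinates adapted to $\Omega_h$, impose $|\nabla u|^2 = 1$, and then rearrange the resulting algebraic identity so as to isolate the leading-order linear balance $\partial_t \phi \sim f$, collecting everything else as $E_{3,i}$.

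First I would interpret the stretched variable $t = (s - g(l))/(1+f(l))$ as a new normal coordinate that flattens $\Omega_h$: in the chart $(l, \theta, \bar\theta, t)$ the domain becomes the slab $\{-1 \le t \le 1\}$, the component $\Gamma_{i+h_i}$ becomes the slice $\{t = i\}$ for $i = \pm 1$, and the approximate solution takes the trivial form $w_h = t$. The Dirichlet condition $\phi = 0$ on $\partial\Omega_h$ then reads $\phi(\cdot, i) \equiv 0$, which automatically forces every tangential derivative $\partial_l\phi$, $\partial_\theta\phi$, $\partial_{\bar\theta}\phi$ to vanish on the boundary.

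Next I would transport the ambient metric to the new chart. Since $ds = (1+f)\,dt + (g'+tf')\,dl$, the Fermi metric $\mathfrak{g}_{\alpha\beta}\,dy^\alpha dy^\beta + ds^{2}$ pulls back to a form whose inverse has the entries
\[
\tilde{\mathfrak{g}}^{l,l} = \mathfrak{g}^{1,1}, \qquad \tilde{\mathfrak{g}}^{l,t} = -\frac{\mathfrak{g}^{1,1}(g'+tf')}{1+f}, \qquad \tilde{\mathfrak{g}}^{t,t} = \frac{1 + \mathfrak{g}^{1,1}(g'+tf')^{2}}{(1+f)^{2}},
\]
with the spherical blocks unchanged. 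Because $u = t + \phi$, one has $\partial_t u = 1 + \partial_t \phi$ and the tangential derivatives $\partial_\alpha u$ equal $\partial_\alpha \phi$. On the slice $\{t = i\}$ the latter vanish, so the cross term $\tilde{\mathfrak{g}}^{l,t}$ drops out and only the $tt$-component contributes:
\[
|\nabla u|^{2}\big|_{\Gamma_{i+h_i}} = \tilde{\mathfrak{g}}^{t,t}\,(1 + \partial_t \phi)^{2}.
\]

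Finally, imposing $|\nabla u|^{2} = 1$ and noting that $g'+if' = h_i'$ at $t = i$ yields the algebraic relation $(1 + \mathfrak{g}^{1,1}(h_i')^{2})(1 + \partial_t \phi)^{2} = (1+f)^{2}$. Expanding both sides as polynomials in the small quantities $f$, $h_i'$, $\partial_t \phi$, pulling the linear balance $\partial_t \phi - f$ to one side, and grouping all quadratic and higher-order contributions on the other produces the stated identity $\partial_t \phi - f = E_{3,i}$. The main obstacle is the bookkeeping of the coordinate change — keeping track of the non-orthogonal inverse metric in the $(l,t)$ frame and verifying that the cross term drops out precisely thanks to the Dirichlet condition, so that the remainder $E_{3,i}$ consists only of quadratic or higher-order perturbations suitable for the later fixed-point argument.
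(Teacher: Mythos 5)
Your plan is correct and is essentially the paper's own computation, just organized in a slightly cleaner chart: the paper stays in the Fermi variables $\left(  s,l\right)  $, uses the chain-rule identity $\partial_{l}\phi=-h_{i}^{\prime}\partial_{s}\phi$ coming from $\phi=0$ on $\Gamma_{i+h_{i}}$, and substitutes $\partial_{s}\phi=\partial_{t}\phi/\left(  1+f\right)  $ into $\left(  \ref{G1}\right)  $, whereas you pass to the $\left(  l,t\right)  $ chart where the same information appears as $\partial_{l}\phi|_{t=i}=0$. Your inverse-metric entries are correct, and on the boundary (where $g^{\prime}+if^{\prime}=h_{i}^{\prime}$ and $\partial_{l}w=-h_{i}^{\prime}/\left(  1+f\right)  $) your factored relation $\left(  1+\mathfrak{g}^{1,1}h_{i}^{\prime2}\right)  \left(  1+\partial_{t}\phi\right)  ^{2}=\left(  1+f\right)  ^{2}$ is exactly what $\left(  \ref{G1}\right)  $ reduces to.

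One caveat concerns your last sentence, which asserts that expanding this relation "produces the stated identity." The exact expansion gives
\[
\partial_{t}\phi-f=-\frac{1}{2}\left(  1+\mathfrak{g}^{1,1}h_{i}^{\prime
2}\right)  \left(  \partial_{t}\phi\right)  ^{2}-\mathfrak{g}^{1,1}
h_{i}^{\prime2}\,\partial_{t}\phi+\frac{1}{2}f^{2}-\frac{1}{2}\mathfrak{g}
^{1,1}h_{i}^{\prime2},
\]
whose cross term is $-\mathfrak{g}^{1,1}h_{i}^{\prime2}\partial_{t}\phi$ rather than the $+\frac{\mathfrak{g}^{1,1}h_{i}^{\prime}}{1+f}\partial_{t}\phi$ appearing in the stated $E_{3,i}$. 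The mismatch traces to the paper's step $\left(  \ref{G2}\right)  $, where the cross term $2\mathfrak{g}^{1,1}\partial_{l}w\,\partial_{l}\phi=-2\mathfrak{g}^{1,1}h_{i}^{\prime}\partial_{l}w\,\partial_{s}\phi$ is recorded as $-2\mathfrak{g}^{1,1}h_{i}^{\prime}\partial_{s}\phi$, i.e. the factor $\partial_{l}w=-h_{i}^{\prime}/\left(  1+f\right)  $ on the boundary is dropped; your version is the exact one. Since both remainders are quadratically small in $\left(  f,h_{i}^{\prime},\partial_{t}\phi\right)  $ and obey the same bounds, Lemma \ref{decay} and the subsequent fixed-point argument are unaffected, so this is a discrepancy with the paper's bookkeeping rather than a gap in your argument; just do not claim the literal formula for $E_{3,i}$ as stated, or else note that it agrees with yours up to such admissible terms.
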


\begin{proof}
We compute the norm of the gradient in the $\left(  s,l\right)  $ coordinate
and get the following equation to be satisfied on the boundary $\partial
\Omega_{h}:$
\begin{equation}
\left\vert \nabla\left(  w+\phi\right)  \right\vert ^{2}=\left(  \partial
_{s}w+\partial_{s}\phi\right)  ^{2}+\mathfrak{g}^{1,1}\left(  \partial
_{l}w+\partial_{l}\phi\right)  ^{2}=1. \label{G1}%
\end{equation}
Direct computation yields
\[
\partial_{s}w=\frac{1}{1+f},
\]
and
\[
\partial_{l}w=\frac{-g^{\prime}}{1+f}-\frac{\left(  s-g\right)  f^{\prime}%
}{\left(  1+f\right)  ^{2}}.
\]
On the other hand, differentiating the identity $\phi\left(  -1+h_{1}%
,l\right)  =0$ with respect to $l,$ we obtain%
\[
\partial_{l}\phi=-\partial_{s}\phi h_{1}^{\prime}\text{ on }\Gamma_{-1+h_{-1}%
}.
\]

On $\Gamma_{-1+h_{-1}},$ the right hand side of $\left(  \ref{G1}\right)  $ is
equivalent to
\begin{equation}
\left(  1+\mathfrak{g}^{1,1}h_{1}^{\prime2}\right)  \left(  \partial_{s}%
\phi\right)  ^{2}+\left(  2\partial_{s}w-2\mathfrak{g}^{1,1}h_{1}^{\prime
}\right)  \partial_{s}\phi+\left(  \partial_{s}w\right)  ^{2}+\mathfrak{g}%
^{1,1}\left(  \partial_{l}w\right)  ^{2}=1. \label{G2}%
\end{equation}
Inserting the equation
\[
\partial_{s}\phi=\frac{\partial_{t}\phi}{1+f}%
\]
into $\left(  \ref{G2}\right)  ,$ we get
\[
\left(  1+\mathfrak{g}^{1,1}h_{1}^{\prime2}\right)  \left(  \partial_{t}%
\phi\right)  ^{2}+\left(  2-2\frac{\mathfrak{g}^{1,1}h_{1}^{\prime}}%
{1+f}\right)  \partial_{t}\phi-2f-f^{2}+\mathfrak{g}^{1,1}\left(  g^{\prime
}+\frac{\left(  s-g\right)  f^{\prime}}{1+f}\right)  ^{2}=0.
\]
This completes the proof.
\end{proof}

The function $\phi$ should also satisfy
\[
\Delta\phi=-\Delta w=J_{\Gamma_{0}}g+\left(  \Delta_{\Gamma_{0}}f+\left\vert
A\right\vert ^{2}\right)  t-E_{1}+E_{2}+\Delta_{\Gamma_{0}}w-\Delta
_{\Gamma_{s}}w,\text{ in }\Omega_{h}.
\]
Here we recall that by $J_{\Gamma_{0}}$ we denote the Jacobi operator of
$\Gamma_{0}.$ Therefore, we are lead to solve the following nonlinear problem
for the unknown functions $\left(  f,g,\phi\right)  .$%

\begin{equation}
\left\{
\begin{array}
[c]{l}%
\Delta\phi=J_{\Gamma_{0}}g+\left(  \Delta_{\Gamma_{0}}f+\left\vert
A\right\vert ^{2}\right)  t-E_{1}+E_{2}+\Delta_{\Gamma_{0}}w-\Delta
_{\Gamma_{s}}w,\text{ in }\Omega_{h},\\
\phi=0\text{ and }\partial_{t}\phi-f=E_{3,i},\text{on }\partial\Omega_{h}.
\end{array}
\right.  \label{fi1}%
\end{equation}

\begin{lemma}
\label{Lap}We have the following estimate for the Laplacian operator acting on
functions depending on $s$ and $l:$%
\[
\Delta_{\Gamma_{0}}\eta- \partial_{l}^{2}\eta-\frac{3}{l}\partial_{l}%
\eta=O\left(  \frac{\varepsilon}{1+\varepsilon l}\right)  \partial_{l}\eta,
\]
and%
\begin{equation}
\Delta_{\Gamma_{s}}\eta-\Delta_{\Gamma_{0}}\eta=O\left(  \frac{\varepsilon
^{2}}{\left(  1+\varepsilon l\right)  ^{2}}\right)  \partial_{l}\eta+O\left(
\frac{\varepsilon}{1+\varepsilon l}\right)  \partial_{l}^{2}\eta. \label{s0}%
\end{equation}

\end{lemma}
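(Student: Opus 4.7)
The plan is to express both Laplacians via explicit local formulas and then reduce the estimates to asymptotic properties of the Simons cone foliation. For the first estimate, the starting point is formula (\ref{L1}), which writes $\Delta_{\Gamma_0}$ acting on a function of $l$ alone as $\partial_l^2 + a(l)\partial_l$ with
\[
a(l) = \left(\frac{3}{x}+\frac{3\varphi_\varepsilon'}{\varphi_\varepsilon}\right)\frac{dx}{dl}
\]
evaluated along $S_\varepsilon$. The task therefore reduces to bounding $|a(l) - 3/l|$ by $C\varepsilon/(1+\varepsilon l)$. I would rescale by $\varepsilon$: setting $\tilde x = \varepsilon x$, $\tilde l = \varepsilon l$, and $\tilde \varphi = \varphi_1$, the inequality becomes $|\tilde a(\tilde l) - 3/\tilde l| \leq C/(1+\tilde l)$ for the analogous coefficient $\tilde a$ on the unrescaled surface $S_1$. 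On bounded pieces of $S_1$ this is immediate from smoothness; in the asymptotic regime $\tilde l \to \infty$ it follows from the Hardt-Simon expansion $\varphi_1(\tilde x) = \tilde x + O(\tilde x^{-2})$ (with derivatives), plugged into the formula for $\tilde a$ together with the matching expansion $\tilde l = \sqrt{2}\,\tilde x + O(\tilde x^{-1})$ for the arclength change of variables.

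For the second estimate, I would use the Fermi parametrization $X \mapsto X + s\,\nu(X)$ to pull the intrinsic metric on $\Gamma_s$ back to $S_\varepsilon$. The pulled-back metric satisfies $\mathfrak{g}_s = \mathfrak{g}_0 (I - sA)^2$, where $A$ is the shape operator of $S_\varepsilon$; equivalently, the principal curvatures evolve as $k_i/(1 - s k_i)$, exactly as used already in Lemma \ref{l2}. Substituting this into the coordinate formula
\[
\Delta_{\Gamma_s}\eta = \frac{1}{\sqrt{\det \mathfrak{g}_s}}\,\partial_i\!\left(\sqrt{\det \mathfrak{g}_s}\,\mathfrak{g}_s^{ij}\,\partial_j \eta\right)
\]
and expanding in powers of $s$, every term contributing to $\Delta_{\Gamma_s}\eta - \Delta_{\Gamma_0}\eta$ carries either a factor of $s|A|$ or of $s^2|A|^2$, because the $s=0$ term is exactly $\Delta_{\Gamma_0}\eta$. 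Since $|s|\leq 1$ on the tubular neighborhood and $|A| \leq C\varepsilon/(1+\varepsilon l)$ (by rescaling the bound $|A_{S_1}|(\tilde l) = O(1/(1+\tilde l))$, which in turn follows from $S_1$'s asymptoticity to the Simons cone), the coefficient of $\partial_l \eta$ inherits the factor $|A|^2 = O(\varepsilon^2/(1+\varepsilon l)^2)$ and the coefficient of $\partial_l^2 \eta$ inherits the factor $|A| = O(\varepsilon/(1+\varepsilon l))$, as claimed in (\ref{s0}).

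The main obstacle is producing the \emph{sharp} decay in the first step. A crude comparison only shows that $|\tilde a(\tilde l) - 3/\tilde l|$ is bounded, whereas one needs decay $1/(1+\tilde l)$; this requires precisely the quantitative rate at which $\varphi_1$ approaches the cone profile, supplied by the Hardt-Simon theorem (strict area minimality implying the slowest-possible-rate asymptotic). Once this quantitative asymptotic is in place, both estimates follow by direct substitution into (\ref{L1}) and the Fermi-coordinate expansion, with the curvature bound $|A| = O(\varepsilon/(1+\varepsilon l))$ doing all the remaining work.
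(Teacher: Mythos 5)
Your treatment of the first estimate follows the same route as the paper (start from (\ref{L1}), compare the first--order coefficient with $3/l$, rescale to $S_1$), but your diagnosis of where the difficulty sits is off. The required bound $|\tilde a(\tilde l)-3/\tilde l|\le C/(1+\tilde l)$ does \emph{not} need the sharp Hardt--Simon rate $\tilde x^{-2}$: for $\tilde l\gtrsim 1$ the coefficient $\tilde a$ and $3/\tilde l$ are each separately $O(1/\tilde l)$ (all one uses is $|\varphi_1'|\le C$, $\varphi_1\ge c(1+\tilde x)$, $\tilde l\asymp 1+\tilde x$); in fact $\tilde a\approx 6/\tilde l$ in the conical regime, so the difference is genuinely of size $1/\tilde l$ and no finer cancellation is claimed or available. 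The only cancellation used is near the axis, where the $3/\tilde x$ and $3/\tilde l$ singularities must match; that follows from smoothness and $\varphi_1'(0)=0$ (so $\tilde l=\tilde x+O(\tilde x^{3})$), which is what the paper's displayed identity encodes. So your claim that ``a crude comparison only shows boundedness'' is incorrect (and $\tilde l=\sqrt2\,\tilde x+c+o(1)$ with a generically nonzero constant, not $\sqrt2\,\tilde x+O(\tilde x^{-1})$); still, since the stronger expansion you invoke is true, this half of your argument survives.

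The genuine gap is in the second estimate. Your accounting ``every correction term carries a factor $s|A|$ or $s^2|A|^2$, hence the coefficient of $\partial_l\eta$ inherits $|A|^2$'' is false. Writing the difference as the paper does,
\[
\Delta_{\Gamma_s}\eta-\Delta_{\Gamma_0}\eta=\partial_l\Bigl(\ln\prod_{i}(1-k_is)\Bigr)\mathfrak{g}_s^{1,1}\partial_l\eta
+\partial_l\bigl(\ln\sqrt{\det\mathfrak{g}_0}\bigr)\bigl(\mathfrak{g}_s^{1,1}-\mathfrak{g}_0^{1,1}\bigr)\partial_l\eta
+\partial_l\bigl((\mathfrak{g}_s^{1,1}-\mathfrak{g}_0^{1,1})\partial_l\eta\bigr),
\]
the first and third contributions to the $\partial_l\eta$--coefficient are of the form $s\,\partial_l k_i\times(\text{bounded})$: they are \emph{linear in the derivative of the second fundamental form}, not quadratic in $|A|$. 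The bound $O(\varepsilon^2/(1+\varepsilon l)^2)$ for them comes from the separate decay $|dk_i/dl|\le C\varepsilon^2/(1+\varepsilon l)^2$, which is precisely the fact the paper isolates as the crux of its proof and which your proposal never states or uses; a pointwise bound on $|A|$ alone cannot yield it. The middle term is likewise only \emph{linear} in $sk_i$, multiplied by $\partial_l\ln\sqrt{\det\mathfrak{g}_0}$, which behaves like $3/l$ near the axis and like $C\varepsilon/(1+\varepsilon l)$ only in the conical regime; it is not pointwise $O(|A|^2)$, and handling it requires using that extra decay of $\partial_l\ln\sqrt{\det\mathfrak{g}_0}$ for $\varepsilon l\gtrsim1$ (and, near the axis, pairing the resulting $\varepsilon l^{-1}\partial_l\eta$ with the evenness/second--derivative control of $\eta$). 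The $\partial_l^2\eta$ part of your claim (factor $\mathfrak{g}_s^{1,1}-\mathfrak{g}_0^{1,1}=O(s|A|)$) is fine, but as written your argument does not deliver the $\partial_l\eta$ part of (\ref{s0}); the curvature--derivative decay is the missing ingredient and is the actual content of the paper's proof of this estimate.
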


\begin{proof}
By $\left(  \ref{L1}\right)  ,$ we have%
\[
\Delta_{\Gamma_{0}}\eta-\frac{d^{2}\eta}{dl^{2}}-\frac{3}{l}\frac{d\eta}%
{dl}=\left[  \left(  \frac{3}{x}+\frac{3\varphi_{\varepsilon}^{\prime}%
}{\varphi_{\varepsilon}}\right)  \frac{dx}{dl}-\frac{3}{l}\right]  \frac
{d\eta}{dl}.
\]
We compute
\begin{align*}
\left(  \frac{3}{x}+\frac{3\varphi_{\varepsilon}^{\prime}}{\varphi
_{\varepsilon}}\right)  \frac{dx}{dl}-\frac{3}{l}  &  =\frac{1}{\sqrt
{1+\varphi_{\varepsilon}^{\prime2}}}\left(  \frac{3}{x}+\frac{3\varphi
_{\varepsilon}^{\prime}}{\varphi_{\varepsilon}}\right)  -\frac{3}{l}\\
&  =\frac{1}{\sqrt{1+\left(  \varphi_{1}^{\prime}\left(  \varepsilon x\right)
\right)  ^{2}}}\left(  \frac{3}{x}+\frac{3\varepsilon\varphi_{1}^{\prime
}\left(  \varepsilon x\right)  }{\varphi_{1}\left(  \varepsilon x\right)
}\right)  -\frac{3}{l}\\
&  =3\frac{l-x\sqrt{1+\left(  \varphi_{1}^{\prime}\left(  \varepsilon
x\right)  \right)  ^{2}}}{lx\sqrt{1+\left(  \varphi_{1}^{\prime}\left(
\varepsilon x\right)  \right)  ^{2}}}+\varepsilon O\left(  \frac
{1}{1+\varepsilon l}\right) \\
&  =O\left(  \frac{\varepsilon}{1+\varepsilon l}\right)  .
\end{align*}

Next we prove $\left(  \ref{s0}\right)  .$ Let us denote by $\mathfrak{g}_{s}$
the metric tensor of $\Gamma_{s}.$ Explicitly, $\mathfrak{g}_{s}\left(
l,\theta,\bar{\theta}\right)  =\mathfrak{g}\left(  l,\theta,\bar{\theta
},s\right)  .$ From the calculation in \cite{DW}, we know that
\[
\sqrt{\det\mathfrak{g}_{s}}=\sqrt{\det\mathfrak{g}_{0}}\prod\limits_{i=1}%
^{6}\left(  1-k_{i}s\right)  ,
\]
where $k_{i}$ are the principle curvatures of $\Gamma_{0}=S_{\varepsilon}.$
Hence, for a function $\eta$ depending on $s$ and $l,$
\begin{align*}
\Delta_{\Gamma_{s}}\eta &  =\frac{1}{\sqrt{\det\mathfrak{g}_{s}}}\partial
_{i}\left(  \sqrt{\det\mathfrak{g}_{s}}\mathfrak{g}_{s}^{i,j}\partial_{j}%
\eta\right) \\
&  =\partial_{l}\left(  \ln\left(  \sqrt{\det\mathfrak{g}_{0}}\prod
\limits_{i=1}^{6}\left(  1-k_{i}s\right)  \right)  \right)  \mathfrak{g}%
_{s}^{1,1}\partial_{l}\eta+\partial_{l}\left(  \mathfrak{g}_{s}^{1,1}%
\partial_{l}\eta\right)  .
\end{align*}
Consequently,
\begin{align*}
\Delta_{\Gamma_{s}}\eta-\Delta_{\Gamma_{0}}\eta &  =\partial_{l}\left(
\ln\left(  \prod\limits_{i=1}^{6}\left(  1-k_{i}s\right)  \right)  \right)
\mathfrak{g}_{s}^{1,1}\partial_{l}\eta\\
&  +\partial_{l}\left(  \ln\sqrt{\det\mathfrak{g}_{0}} \right)  \left(
\mathfrak{g}_{s}^{1,1}-\mathfrak{g}_{0}^{1,1}\right)  \partial_{l}\eta\\
&  +\partial_{l}\left(  \left(  \mathfrak{g}_{s}^{1,1}-\mathfrak{g}_{0}%
^{1,1}\right)  \partial_{l}\eta\right)  .
\end{align*}
Then the desired estimate follows from the fact that
\[
\left\vert \frac{dk_{i}}{dl}\right\vert \leq C\frac{\varepsilon^{2}}{\left(
1+\varepsilon l\right)  ^{2}}.
\]

\end{proof}

By the previous computations, the term $-E_{1}+E_{2}+\Delta_{\Gamma_{0}%
}w-\Delta_{\Gamma_{s}}w$ will be small and can be regarded as perturbation terms.

To get a solution $\left(  f,g,\phi\right)  $ for the original problem, let us
introduce the functional framework to work with. Let $\alpha\in\left(
0,1\right)  $ be a fixed constant. Note that the functions $f$ and $g$ are
both defined on the minimal surface $S_{\varepsilon}.$ However, we shall work
both in functional spaces defined on $S_{\varepsilon}$ and $S_{1}.$ Hence we
introduce the following

\begin{definition}
For $\mu=0,1,2,$ $\beta\geq0,\delta>0,$ the space $\mathcal{B}_{\beta
,\mu;\delta}$ consists of those functions $\eta$ defined on $S_{\delta}$ such
that
\[
\left\Vert \eta\right\Vert _{\beta,\mu;\delta}:=\sup_{\left\vert z\right\vert
=l}\left[  \left(  1+\delta l\right)  ^{\beta}\left\Vert \eta\right\Vert
_{C^{\mu,\alpha}\left(  S_{\delta}\cap B_{1}\left(  z\right)  \right)
}\right]  <+\infty.
\]

\end{definition}

\begin{definition}
The space $\mathcal{\bar{B}}_{\beta,2;\delta}$ consists of those functions
$\eta$ defined on $S_{\delta}$ such that
\begin{align*}
\left\Vert \eta\right\Vert _{\beta,2;\delta,\symbol{94}}  &  :=\sup
_{\left\vert z\right\vert =l}\left[  \left(  1+\delta l\right)  ^{\beta
}\left\Vert \eta\right\Vert _{C^{0,\alpha}\left(  S_{\delta}\cap B_{1}\left(
z\right)  \right)  }+\left(  1+\delta l\right)  ^{\beta+1}\left\Vert
\eta^{\prime}\right\Vert _{C^{0,\alpha}\left(  S_{\delta}\cap B_{1}\left(
z\right)  \right)  }\right] \\
&  +\sup_{\left\vert z\right\vert =l}\left[  \left(  1+\delta l\right)
^{\beta+2}\left\Vert \eta^{\prime\prime}\right\Vert _{C^{0,\alpha}\left(
S_{\delta}\cap B_{1}\left(  z\right)  \right)  }\right]  <+\infty.
\end{align*}

\end{definition}

With the above definition, we shall assume a priori $f\in\mathcal{B}%
_{2,2;\varepsilon}.$ We also assume the rescaled function $\bar{g}\left(
\cdot\right)  =g\left(  \frac{\cdot}{\varepsilon}\right)  \in\mathcal{\bar{B}%
}_{\beta_{0},2;1},$ where $\beta_{0}>2$ is a fixed constant with $\beta_{0}-2$
small. On the other hand, the function $\phi$ is defined on $\Omega_{h},$
which depends on $f$ and $g.$ This turns out to be not very convenient for our
later purpose. Hence slightly abusing the notation, we also regard $\phi$ as
the restriction of a function $\mathcal{T}\left(  \phi\right)  $ on
$\Xi:=\left[  -1,1\right]  \times\mathbb{[}0,+\infty),$ where $\mathcal{T}%
\left(  \phi\right)  $ is a function of $t$ and $l$ defined for $\left(
t,l\right)  \in\bar{\Xi}:=\left[  -1,1\right]  \times\mathbb{R}$, even in the
variable $l$.

\begin{definition}
For $\mu=0,1,2,$ $\beta\geq0$, the space $\mathcal{B}_{\beta,\mu;\ast}$
consists of those functions $\phi$ such that
\[
\left\Vert \phi\right\Vert _{\beta,\mu;\ast}:=\sup_{l\in\mathbb{R};z\in
\bar{\Xi},\left\vert z\right\vert =\left\vert l\right\vert }\left[  \left(
1+\varepsilon\left\vert l\right\vert \right)  ^{\beta}\left\Vert
\mathcal{T}\left(  \phi\right)  \right\Vert _{C^{\mu,\alpha}\left(  \bar{\Xi
}\cap B_{1}\left(  z\right)  \right)  }\right]  <+\infty.
\]

\end{definition}

We shall assume $\phi\in\mathcal{B}_{2,2;\ast}.$ The following invertibility
property of the Jacobi operator on $S_{1}$ will play an important role in our analysis.

\begin{lemma}
\label{lem inverse Jacobian op} For each function $\xi\in\mathcal{B}%
_{\beta_{0}+2,0;1},$ there is a solution $\eta\in\mathcal{\bar{B}}_{\beta
_{0},2;1}$ such that%
\[
J_{S_{1}}\left(  \eta\right)  =\xi,
\]
Moreover, it satisfies%
\[
\left\Vert \eta\right\Vert _{\beta_{0},2;1,\symbol{94}}\leq C\left\Vert
\xi\right\Vert _{\beta_{0}+2,0;1}.
\]

\end{lemma}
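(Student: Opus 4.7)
The approach rests on the strict stability of the Hardt--Simon foliation. Differentiating the family $\{S_\delta\}_{\delta > 0}$ at $\delta = 1$ produces a transverse vector field whose normal component yields a positive solution $\Phi_0 > 0$ of $J_{S_1} \Phi_0 = 0$ on $S_1$. From $S_\delta = \delta^{-1} S_1$ and $\eta_0(X) = |X|^{-2} + o(|X|^{-2})$, one computes directly that $\Phi_0(X) = c_0 |X|^{-2} + o(|X|^{-2})$ as $|X| \to \infty$ with $c_0 > 0$, and $\Phi_0$ is bounded below by a positive constant on each compact subset. This positive Jacobi field with the critical decay rate is the cornerstone of the argument.

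Setting $\eta = \Phi_0 v$ and using $J_{S_1}\Phi_0 = 0$, a short computation gives
\[
J_{S_1}(\Phi_0 v) \;=\; \Phi_0^{-1} \operatorname{div}_{S_1}\!\bigl(\Phi_0^2 \nabla v\bigr),
\]
so that $J_{S_1}\eta = \xi$ becomes the coercive divergence-form equation $\operatorname{div}_{S_1}(\Phi_0^2 \nabla v) = \Phi_0 \xi$, with natural quadratic form $\int_{S_1} \Phi_0^2 |\nabla v|^2$. On each exhaustion $S_1 \cap B_R(0)$ I would solve the associated Dirichlet problem by Lax--Milgram to produce $v_R$, and set $\eta_R := \Phi_0 v_R$. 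Standard elliptic regularity gives $\eta_R \in C^{2,\alpha}_{\mathrm{loc}}$ satisfying $J_{S_1} \eta_R = \xi$ on the truncation.

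The passage to the limit and the decay estimate rest on an explicit barrier. Separating variables on the cone yields indicial equations $\gamma^2 + 5\gamma + (6 - \mu_k) = 0$ in each $O(4) \times O(4)$-spherical mode; the invariant mode $\mu_0 = 0$ has roots $-2, -3$, and all higher modes have roots strictly outside $[-3, -2]$. For $\beta_0 \in (2, 3)$ close to $2$, the test function $\bar\eta(X) := A(1 + |X|)^{-\beta_0}$ satisfies $J_{S_1} \bar\eta \le -c(\beta_0)\, A (1+|X|)^{-\beta_0 - 2}$ outside a large ball (up to exponentially small corrections from the Hardt--Simon asymptotics), so with $A = C \|\xi\|_{\beta_0 + 2, 0; 1}$ it dominates $|\xi|$. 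Comparing $v_R$ with $\bar\eta / \Phi_0$ in divergence form, which is legitimate because $\Phi_0 > 0$, yields the uniform pointwise bound
\[
\sup_X (1 + |X|)^{\beta_0}\, |\eta_R(X)| \;\le\; C\, \|\xi\|_{\beta_0 + 2, 0; 1},
\]
independently of $R$. Extracting a $C^{2,\alpha}_{\mathrm{loc}}$ limit and applying interior Schauder estimates on each rescaled unit ball $B_1(z)$ with $|z| = l$ (rescale by $l^{-1}$, transform the equation, then return) upgrades the $L^\infty$ decay to the full norm on $\bar{\mathcal{B}}_{\beta_0, 2; 1}$.

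The hard part is the sharpness of the weight. Since $\beta_0$ must lie strictly between the two indicial exponents $-2$ and $-3$, uniqueness can fail at $\beta_0 = 2$ (one could add multiples of $\Phi_0$) while solvability can fail at $\beta_0 = 3$, so the admissible window is narrow and any barrier must match the rate almost sharply. Executing the barrier argument globally on $S_1$ rather than merely on the asymptotic cone requires controlling the Hardt--Simon correction to both $\Phi_0$ and the metric, and verifying that all non-invariant spherical modes contribute strict indicial inequalities at weight $-\beta_0$. Both ingredients ultimately reflect the strict area-minimization of Simons' cone established by Bombieri--De Giorgi--Giusti and sharpened by Hardt--Simon.
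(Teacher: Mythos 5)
The paper gives no details here (it simply defers to the Pacard--Wei argument), so your proposal has to stand on its own. Its skeleton is sound and in the right spirit: the positive Jacobi field $\Phi_0\sim c_0|X|^{-2}$ coming from the Hardt--Simon foliation, the ground-state identity $J_{S_1}(\Phi_0 v)=\Phi_0^{-1}\operatorname{div}_{S_1}(\Phi_0^{2}\nabla v)$, exhaustion by Dirichlet problems, a barrier with weight $\beta_0\in(2,3)$ inside the indicial gap (the indicial computation $\gamma^{2}+5\gamma+6-\mu=0$, with roots $-2,-3$ in the invariant mode and $0,-5$ in the next one, is correct), and scaled Schauder estimates to pass from the weighted $L^\infty$ bound to the $\bar{\mathcal B}_{\beta_0,2;1}$ norm.

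The genuine gap is at the decisive step, the claim that the comparison argument gives $\sup\,(1+|X|)^{\beta_0}|\eta_R|\le C\Vert\xi\Vert_{\beta_0+2,0;1}$ uniformly in $R$. Your barrier $\bar\eta=A(1+|X|)^{-\beta_0}$ is a supersolution of $J_{S_1}$ only outside a fixed large ball $B_{R_0}$; on the compact core the zeroth-order term $|A|^{2}\bar\eta>0$ is not small and $J_{S_1}\bar\eta\le-|\xi|$ can fail, and rewriting in divergence form does not help, since $\operatorname{div}_{S_1}\bigl(\Phi_0^{2}\nabla(\bar\eta/\Phi_0)\bigr)=\Phi_0\,J_{S_1}\bar\eta$. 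So the maximum principle can only be applied on $S_1\cap(B_R\setminus B_{R_0})$ and needs $|\eta_R|\le\bar\eta$ on the inner boundary $\partial B_{R_0}$, i.e. a bound for $\sup_{B_{R_0}}|\eta_R|$ in terms of $\Vert\xi\Vert$ uniform in $R$, which you never establish; enlarging $A$ to match the unknown inner boundary values only yields $M_R\le C\bigl(\Vert\xi\Vert+\sup_{B_{R_0}}|\eta_R|\bigr)$, which does not close. This compact-region bound is precisely where triviality of the kernel in the weighted space must be used (were the estimate provable without it, it would also hold when the kernel is nontrivial, which is impossible): one closes it, e.g., by a contradiction/compactness argument --- normalize a hypothetical bad sequence, extract a limit solving $J_{S_1}\tilde\eta=0$ with $|\tilde\eta|\le C(1+|X|)^{-\beta_0}$, and exclude it because the only $O(4)\times O(4)$-invariant Jacobi field on $S_1$ regular at the core and decaying at infinity is a multiple of $\Phi_0\sim|X|^{-2}$, which is not $O(|X|^{-\beta_0})$ for $\beta_0>2$ (with a mode-by-mode indicial analysis if non-invariant data are allowed) --- or, in the invariant setting, by an explicit variation-of-parameters representation built from $\Phi_0$ and the second, core-singular solution. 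A minor point: the corrections from the Hardt--Simon asymptotics are polynomially, not exponentially, small, though this does not affect the exterior barrier inequality.
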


\begin{proof}
The proof of this lemma goes in a similar fashion as that of \cite{PW}, we
omit the details.
\end{proof}

We would like to solve the nonlinear problem $\left(  \ref{fi1}\right)  $
using fixed point arguments.

\begin{lemma}
\label{estimate1}For each $\eta\in\mathcal{B}_{\beta,0;\ast},$ there exists a
unique solution $\phi\in\mathcal{B}_{\beta,2;\ast},$ to the problem%
\begin{equation}
\left\{
\begin{array}
[c]{l}%
\partial_{t}^{2}\phi+\partial_{l}^{2}\phi+\frac{3}{l}\partial_{l}\phi
=\eta,\text{ in }\Omega_{h},\\
\phi=0\text{ on }\partial\Omega_{h},
\end{array}
\right.  \label{a}%
\end{equation}
with $\left\Vert \phi\right\Vert _{\beta,2;\ast}\leq C\left\Vert
\eta\right\Vert _{\beta,0;\ast}.$ This solution will be denoted by
$L_{1}\left(  \eta\right)  .$
\end{lemma}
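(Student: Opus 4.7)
The plan is to solve this linear model problem by approximating on bounded cylinders, proving a uniform weighted estimate via a tailored barrier, and passing to the limit. Because $\partial_l^2+\tfrac{3}{l}\partial_l$ is the radial Laplacian on $\mathbb{R}^4$, I interpret $\mathcal{T}(\phi)$ as an $SO(4)$-invariant function on $[-1,1]\times\mathbb{R}^4$ via $l=|y|$, so that the equation is recast as the honest Laplace equation $\Delta_{t,y}\phi=\eta$ on $[-1,1]\times\mathbb{R}^4$ with Dirichlet boundary $\{t=\pm 1\}$; the apparent singularity of $\tfrac{3}{l}$ at $l=0$ is thereby removed. On the truncated cylinder $Q_R:=[-1,1]\times B_R(0)$ with zero boundary data I produce a unique smooth solution $\phi_R$ by standard $H^1_0$ variational theory, with coercivity supplied by the positive Dirichlet spectral gap $\pi^2/4$ in $t$.

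The heart of the matter is a uniform weighted $L^\infty$ bound, for which I construct the supersolution
$$
\Psi(t,l)\;:=\;A\,(1+\varepsilon^2 l^2)^{-\beta/2}\bigl(1-\tfrac{1}{2}t^2\bigr),\qquad A:=C_\beta\|\eta\|_{\beta,0;\ast}.
$$
Direct differentiation yields $\partial_t^2\Psi=-A(1+\varepsilon^2 l^2)^{-\beta/2}$, while a short computation shows
$$
\bigl|\Delta_y(1+\varepsilon^2 l^2)^{-\beta/2}\bigr|\;\le\;C_\beta\,\varepsilon^2\,(1+\varepsilon^2 l^2)^{-\beta/2}.
$$
For $\varepsilon$ small this radial part is a lower-order perturbation of the purely $t$-part, so $L\Psi\le -\tfrac{A}{2}(1+\varepsilon^2 l^2)^{-\beta/2}\le -|\eta|$ throughout $Q_R$. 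Since $\Psi\ge 0$ on $\partial Q_R$, the maximum principle applied to $L(\Psi\pm\phi_R)\le 0$ forces $|\phi_R|\le\Psi$, giving $\|\phi_R\|_{\beta,0;\ast}\le C\|\eta\|_{\beta,0;\ast}$ uniformly in $R$.

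For the $\mathcal{B}_{\beta,2;\ast}$ bound I apply interior and boundary Schauder estimates for $\Delta_{t,y}$ on unit balls in $[-1,1]\times\mathbb{R}^4$, using that the weight $(1+\varepsilon|l|)^{-\beta}$ is comparable to its value at the ball's center by a factor independent of position. This yields $\|\phi_R\|_{\beta,2;\ast}\le C(\|\phi_R\|_{\beta,0;\ast}+\|\eta\|_{\beta,0;\ast})\le C\|\eta\|_{\beta,0;\ast}$. A diagonal Arzela--Ascoli extraction as $R\to\infty$ produces a $C^{2,\alpha}_{\mathrm{loc}}$ subsequential limit $\phi=L_1(\eta)\in\mathcal{B}_{\beta,2;\ast}$ solving (\ref{a}), and uniqueness in $\mathcal{B}_{\beta,2;\ast}$ follows from applying the same barrier with arbitrarily small constant $A$ to any element of the kernel.

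The main obstacle is engineering the $t$-profile of the barrier so that the spectral gap at the Dirichlet boundary is fully exploited. The natural first eigenfunction $\cos(\pi t/2)$ vanishes at $t=\pm 1$ and therefore cannot dominate $|\eta|$ there, whereas the quadratic profile $1-\tfrac{1}{2}t^2$ stays strictly positive on $[-1,1]$ and produces the uniform contribution $\partial_t^2=-1$; combined with the $O(\varepsilon^2)$ smallness of the radial derivatives of the weight, this makes the pointwise supersolution inequality robust, and the remainder of the argument is standard.
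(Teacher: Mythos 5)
The paper does not actually spell out a proof of this lemma (it is dismissed with ``follows from standard arguments''), so your write-up is supplying exactly the details the authors omitted, and your route --- reading $\partial_l^2+\tfrac{3}{l}\partial_l$ as the radial Laplacian in $\mathbb{R}^{4}$ so that the problem becomes the Dirichlet problem for $\Delta_{t,y}$ in the slab $[-1,1]\times\mathbb{R}^{4}$, solving on truncated cylinders, obtaining a uniform weighted sup bound from the supersolution $A(1+\varepsilon^{2}l^{2})^{-\beta/2}(1-\tfrac12 t^{2})$, upgrading to $\mathcal{B}_{\beta,2;\ast}$ by local Schauder estimates (the weight being comparable over unit balls), and extracting a limit --- is precisely the standard argument the paper has in mind. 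The barrier computation is correct: $\partial_t^2$ produces the uniform negative term $-Aw$, the radial derivatives of the weight contribute only $O(\varepsilon^{2})w$, and $(1+\varepsilon l)^{-\beta}$ and $(1+\varepsilon^{2}l^{2})^{-\beta/2}$ are comparable, so the comparison on $Q_R$ and the $R\to\infty$ limit go through and give existence with $\left\Vert \phi\right\Vert _{\beta,2;\ast}\leq C\left\Vert \eta\right\Vert _{\beta,0;\ast}$, with $C$ independent of $\varepsilon$ (small) and of $h$.

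The one step that does not work as written is uniqueness. If $\phi$ lies in the kernel with $\left\Vert \phi\right\Vert _{\beta,0;\ast}=M$, then on the lateral boundary $\{|y|=R\}$ of $Q_R$ you only know $|\phi|\leq M(1+\varepsilon R)^{-\beta}$, while your barrier with constant $A$ is of the very same size $\sim A(1+\varepsilon R)^{-\beta}$ there; the comparison therefore forces $A\gtrsim M$, and taking ``$A$ arbitrarily small'' is not admissible --- the argument returns $|\phi|\leq CM(1+\varepsilon l)^{-\beta}$, which is no improvement (and for $\beta=0$ there is no decay at all to exploit). The fix is the standard Phragm\'en--Lindel\"of argument in the slab, which uses only boundedness of $\phi$ and the Dirichlet gap in $t$: fix $b<\pi/2$ and $c\leq b/2$, and compare $\pm\phi$ on $Q_R$ with
\begin{equation*}
w_R(t,l):=\sup|\phi|\;\frac{\cos(bt)}{\cos b}\,\frac{\cosh(cl)}{\cosh(cR)},
\end{equation*}
which is a supersolution since the radial Laplacian of $\cosh(cl)$ in $\mathbb{R}^{4}$ is at most $4c^{2}\cosh(cl)\leq b^{2}\cosh(cl)$, dominates $|\phi|$ on all of $\partial Q_R$, and tends to $0$ pointwise as $R\to\infty$; hence $\phi\equiv0$. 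With this substitution your proof is complete.
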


\begin{remark}
In terms of the $\left(  t,l\right)  $ coordinate, the first equation in
$\left(  \ref{a}\right)  $ actually should be considered in the region
$\left(  t,l\right)  \in\left[  -1,1\right]  \times\lbrack0,+\infty).$
However, for the sake of notational simplicity, we just write it as in
$\Omega_{h}.$ Similarly, we use the notation $\partial\Omega_{h}$ in the
second equation of $\left(  \ref{a}\right)  .$
\end{remark}

The proof of Lemma \ref{estimate1} follows from standard arguments.

Next, given two functions $\gamma_{1}$ and $\gamma_{-1}$ defined on
$\mathcal{S}_{\varepsilon}$, we consider
\begin{equation}
\left\{
\begin{array}
[c]{l}%
\partial_{t}^{2}\phi+\partial_{l}^{2}\phi+\frac{3}{l}\partial_{l}%
\phi=J_{\Gamma_{0}}g+\left(  \Delta_{\Gamma_{0}}f+\left\vert A\right\vert
^{2}\right)  t,\text{ in }\Omega_{h},\\
\phi\left(  \pm1,l\right)  =0,\\
\partial_{t}\phi-f=\gamma_{-1},\text{for }t=-1,\\
\partial_{t}\phi-f=\gamma_{1},\text{ for }t=1.
\end{array}
\right.  \label{linear2}%
\end{equation}
To find the explicit form of the solution $\phi$ of this problem, we need to
introduce some notations. For each fixed $\xi\in\mathbb{R}^{4},$ let us use
$p_{1,\xi}\left(  \cdot\right)  $ to denote the solution of the problem
\[
\left\{
\begin{array}
[c]{l}%
p_{1,\xi}^{\prime\prime}\left(  t\right)  -\left\vert \xi\right\vert
^{2}p_{1,\xi}\left(  t\right)  =1,\\
p_{1,\xi}\left(  -1\right)  =p_{1,\xi}\left(  1\right)  =0.
\end{array}
\right.
\]
We use $p_{2,\xi}\left(  \cdot\right)  $ to denote the solution of
\[
\left\{
\begin{array}
[c]{l}%
p_{2,\xi}^{\prime\prime}\left(  t\right)  -\left\vert \xi\right\vert
^{2}p_{2,\xi}\left(  t\right)  =t,\\
p_{2,\xi}\left(  -1\right)  =p_{2,\xi}\left(  1\right)  =0.
\end{array}
\right.
\]
Note that $p_{1,\xi}$ is even, while $p_{2,\xi}$ is odd. For convenience, we
collect properties of $p_{i,\xi}$ in the following

\begin{lemma}
\label{pi}Explicitly,
\begin{align*}
p_{1,\xi}\left(  t\right)   &  =\frac{\cosh\left(  \left\vert \xi\right\vert
t\right)  }{\left\vert \xi\right\vert ^{2}\cosh\left\vert \xi\right\vert
}-\frac{1}{\left\vert \xi\right\vert ^{2}},\\
p_{2,\xi}\left(  t\right)   &  =\frac{\sinh\left(  \left\vert \xi\right\vert
t\right)  }{\left\vert \xi\right\vert ^{2}\sinh\left\vert \xi\right\vert
}-\frac{t}{\left\vert \xi\right\vert ^{2}}.
\end{align*}
Moreover,
\[
\frac{1}{p_{1,\xi}^{\prime}\left(  1\right)  }-\left\vert \xi\right\vert
=\frac{\left\vert \xi\right\vert }{\tanh\left\vert \xi\right\vert }-\left\vert
\xi\right\vert =O\left(  e^{-\frac{\left\vert \xi\right\vert }{2}}\right)
,\text{ as }\left\vert \xi\right\vert \rightarrow+\infty,
\]
and
\[
\left\vert \xi\right\vert ^{2}p_{2,\xi}^{\prime}\left(  1\right)
=\frac{\left\vert \xi\right\vert }{\tanh\left\vert \xi\right\vert }-1.
\]

\end{lemma}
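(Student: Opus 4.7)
The plan is to solve each boundary value problem by inspection and then read off the needed derivative identities. For $p_{1,\xi}$, the constant $-|\xi|^{-2}$ is an obvious particular solution of $p'' - |\xi|^{2}p = 1$, and the homogeneous equation is spanned by $\cosh(|\xi|t)$ and $\sinh(|\xi|t)$. Because both the inhomogeneity and the boundary data are invariant under $t \mapsto -t$, the solution must be even, so the $\sinh$ component vanishes and the $\cosh$ coefficient is fixed by $p_{1,\xi}(1) = 0$. An entirely parallel argument, with particular solution $-t/|\xi|^{2}$ and odd symmetry, yields the formula for $p_{2,\xi}$.

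With both closed forms in hand, I would differentiate at $t = 1$ to obtain
\[
p_{1,\xi}'(1) = \frac{\tanh|\xi|}{|\xi|}, \qquad p_{2,\xi}'(1) = \frac{\coth|\xi|}{|\xi|} - \frac{1}{|\xi|^{2}}.
\]
The identity $|\xi|^{2}p_{2,\xi}'(1) = |\xi|/\tanh|\xi| - 1$ is then immediate. For the remaining asymptotic claim I would rewrite
\[
\frac{1}{p_{1,\xi}'(1)} - |\xi| = \frac{|\xi|(1 - \tanh|\xi|)}{\tanh|\xi|}
\]
and use the elementary identity $1 - \tanh|\xi| = 2e^{-2|\xi|}/(1+e^{-2|\xi|})$ to conclude exponential decay at the rate $O(|\xi|e^{-2|\xi|})$ as $|\xi|\to\infty$, which is certainly $O(e^{-|\xi|/2})$.

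There is essentially no obstacle here: the whole argument is just the solution of two linear second-order ODEs with constant coefficients plus the standard asymptotics of $\coth$ at infinity. The one spot that deserves a moment of care is the parity observation, which is what reduces each ansatz to a single hyperbolic function; without it one would have to carry two constants through the boundary matching and verify that the $\sinh$ (resp.\ $\cosh$) coefficient vanishes by hand.
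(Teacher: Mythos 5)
Your proposal is correct and follows essentially the same route as the paper, which simply states that the lemma follows from direct computation: you solve the two constant-coefficient boundary value problems (using parity to reduce to a single hyperbolic function), differentiate at $t=1$, and read off the $\tanh$ identities and the exponential decay of $|\xi|/\tanh|\xi|-|\xi|$. All the stated formulas and asymptotics check out.
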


\begin{proof}
This follows from direct computation.
\end{proof}

In the following, we shall use the following Fourier type transform%
\[
\hat{\eta}\left(  t,\xi\right)  :=\int_{\mathbb{R}^{4}}e^{-2\pi i\left(
\xi_{1}z_{1}+...+\xi_{4}z_{4}\right)  }\eta\left(  t,l\right)  dz_{1}%
...dz_{4},
\]
where $l=\sqrt{z_{1}^{2}+...+z_{4}^{2}},\xi=\left(  \xi_{1},\xi_{2},\xi
_{3},\xi_{4}\right)  .$ Note that this actually corresponds to the usual
Fourier transform in $\mathbb{R}^{4}.$ We denote by $\left(  \cdot\right)
^{\vee}.$ Define a new function $f_{0}$ by
\[
f_{0}=-\left(  \frac{\left(  \left\vert A\right\vert ^{2}\right)
^{\symbol{94}}}{\left\vert \xi\right\vert ^{2}-\frac{1}{p_{2,\xi}^{\prime
}\left(  1\right)  }}\right)  ^{\vee}.
\]
By the discussion in the next proposition, this definition makes sense.

\begin{proposition}
\label{L2}Suppose $\gamma_{1}-\gamma_{-1}\in\mathcal{B}_{\beta_{0}%
+2,1;\varepsilon},\gamma_{1}+\gamma_{-1}\in\mathcal{B}_{\beta_{0}%
,1;\varepsilon}.$ Then the system $\left(  \ref{linear2}\right)  $ has a
solution $\left(  f,\bar{g}\right)  $ with%
\begin{equation}
\left\Vert f-f_{0}\right\Vert _{\beta_{0},2;\varepsilon}\leq C\left\Vert
\gamma_{1}+\gamma_{-1}\right\Vert _{\beta_{0},1;\varepsilon}, \label{equ1}%
\end{equation}
and
\begin{equation}
\left\Vert \bar{g}\right\Vert _{\beta_{0},2;1,\symbol{94}}\leq C\varepsilon
^{-2}\left\Vert \gamma_{1}-\gamma_{-1}\right\Vert _{\beta_{0}+2,1;\varepsilon
}. \label{equ2}%
\end{equation}
This solution $\left(  f,\bar{g}\right)  $ will be denoted by $L_{2}\left(
\gamma_{-1},\gamma_{1}\right)  .$
\end{proposition}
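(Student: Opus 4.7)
The plan is to decompose $\phi=\phi_{e}+\phi_{o}$ into its even and odd parts in $t$. This decouples the system because both the right-hand side $J_{\Gamma_{0}}g+(\Delta_{\Gamma_{0}}f+|A|^{2})t$ and the Dirichlet condition $\phi(\pm 1,l)=0$ split cleanly along this parity: the even piece sees only $g$, the odd piece only $f$. Averaging and differencing the two Neumann conditions $\partial_{t}\phi-f=\gamma_{\pm 1}$ at $t=\pm 1$ turns them into the single-sided data $\partial_{t}\phi_{e}(1)=(\gamma_{1}-\gamma_{-1})/2$ and $\partial_{t}\phi_{o}(1)-f=(\gamma_{1}+\gamma_{-1})/2$. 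The coupled problem thus splits into two independent scalar problems: the even one determines $g$ via the Jacobi operator, the odd one determines $f$ via the fractional-operator symbol advertised in the introduction.

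To solve each, I would extend the $l$-variable to all of $\mathbb{R}$ by even reflection (so that functions become radial on $\mathbb{R}^{4}$) and take the Fourier transform there. Treating $\Delta_{\Gamma_{0}}$ to leading order as the flat radial Laplacian $\partial_{l}^{2}+\frac{3}{l}\partial_{l}$ (the discrepancy being governed by Lemma \ref{Lap}), each ODE in $t$ has the explicit solution built from Lemma \ref{pi}: $\hat{\phi}_{e}(t)=\widehat{J_{\Gamma_{0}}g}\,p_{1,\xi}(t)$ and $\hat{\phi}_{o}(t)=\bigl(-|\xi|^{2}\hat{f}+\widehat{|A|^{2}}\bigr)p_{2,\xi}(t)$.

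Imposing the Neumann conditions, the even problem yields $\widehat{J_{\Gamma_{0}}g}=\widehat{(\gamma_{1}-\gamma_{-1})/2}\,/\,p_{1,\xi}^{\prime}(1)$; by Lemma \ref{pi} the Fourier multiplier $1/p_{1,\xi}^{\prime}(1)=|\xi|/\tanh|\xi|$ is positive, bounded below, and of order $1$. Rescaling via $\bar{g}(\cdot)=g(\cdot/\varepsilon)$ converts $J_{S_{\varepsilon}}g$ into $\varepsilon^{2}J_{S_{1}}\bar{g}$ pulled back, and inverting $J_{S_{1}}$ through Lemma \ref{lem inverse Jacobian op} produces $\bar{g}\in\mathcal{\bar{B}}_{\beta_{0},2;1}$ with the $\varepsilon^{-2}$ loss that appears in (\ref{equ2}). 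The odd problem yields $\hat{f}\cdot(|\xi|^{2}p_{2,\xi}^{\prime}(1)+1)=\widehat{|A|^{2}}\,p_{2,\xi}^{\prime}(1)-\widehat{(\gamma_{1}+\gamma_{-1})/2}$; again by Lemma \ref{pi} the operator on the left has symbol $|\xi|^{2}p_{2,\xi}^{\prime}(1)+1=|\xi|/\tanh|\xi|$, precisely the $(-\Delta_{M}+1)^{1/2}$-type symbol announced in the introduction, which is uniformly positive and of order $1$. Separating the right-hand side into its $|A|^{2}$-piece and its $\gamma$-piece identifies the first contribution with $\hat{f}_{0}$ and leaves a remainder whose inverse symbol $\tanh|\xi|/|\xi|$ is of order $-1$; this produces precisely the one-derivative gain required for (\ref{equ1}).

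The main technical obstacle is twofold. First, the perturbation $\Delta_{\Gamma_{0}}-(\partial_{l}^{2}+\frac{3}{l}\partial_{l})=O(\varepsilon/(1+\varepsilon l))\partial_{l}$ from Lemma \ref{Lap} must be absorbed; I would do this by setting up a contraction-mapping iteration around the leading Fourier solution above, where smallness of $\varepsilon$ guarantees convergence. Second, one must verify that all the Fourier multipliers appearing in the argument act continuously between the weighted H\"older spaces $\mathcal{B}_{\beta,\mu;\varepsilon}$ and $\mathcal{\bar{B}}_{\beta_{0},2;1}$, uniformly in $\varepsilon$. This is a pseudodifferential / Calderon--Zygmund mapping question adapted to unit-ball localization; the key inputs are the explicit form and exponential decay of $p_{i,\xi}(t)$ in $|\xi|$ away from $t=\pm 1$ (from Lemma \ref{pi}), which control the local H\"older norms, together with the fact that the polynomial weights $(1+\varepsilon l)^{\beta}$ on $S_{\varepsilon}$ rescale cleanly to $(1+|y|)^{\beta}$ on $S_{1}$.
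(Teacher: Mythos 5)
Your proposal follows essentially the same route as the paper's proof: Fourier transform in the radial $\mathbb{R}^{4}$ variable, the explicit profiles $p_{1,\xi},p_{2,\xi}$ of Lemma \ref{pi}, the resulting multiplier equations for $J_{\Gamma_{0}}g$ and for $f$ (with the $\left\vert A\right\vert ^{2}$-part identified with $f_{0}$), inversion of $J_{S_{1}}$ via Lemma \ref{lem inverse Jacobian op} producing the $\varepsilon^{-2}$ in (\ref{equ2}), and a perturbation/contraction step to pass from $\partial_{l}^{2}+\frac{3}{l}\partial_{l}$ to $\Delta_{\Gamma_{0}}$ in the $f$-equation; your explicit even/odd splitting in $t$ is present in the paper only implicitly through the parity of $p_{1,\xi}$ and $p_{2,\xi}$. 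Your multiplier identity $\left\vert \xi\right\vert ^{2}p_{2,\xi}^{\prime}\left(  1\right)  +1=\left\vert \xi\right\vert /\tanh\left\vert \xi\right\vert$ is the (sign-corrected) form of the paper's (\ref{fg2}), and the weighted H\"older mapping bounds you defer are precisely what the paper carries out through the principal-value kernel estimates for the order-one multiplier and the decay of the kernels $K_{1},K_{2}$.
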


\begin{proof}
We are lead to the problem%
\begin{equation}
\left\{
\begin{array}
[c]{l}%
\partial_{t}^{2}\hat{\phi}-\left\vert \xi\right\vert ^{2}\hat{\phi}=\left(
J_{\Gamma_{0}}g\right)  ^{\symbol{94}}+\left(  \Delta_{\Gamma_{0}}f+\left\vert
A\right\vert ^{2}\right)  ^{\symbol{94}}t,\text{ }t\in\left[  -1,1\right]  ,\\
\hat{\phi}\left(  -1,\xi\right)  =\hat{\phi}\left(  1,\xi\right)  =0,\\
\partial_{t}\hat{\phi}\left(  -1,\xi\right)  -\hat{f}\left(  \xi\right)
=\hat{\gamma}_{-1}\left(  \xi\right)  ,\\
\partial_{t}\hat{\phi}\left(  1,\xi\right)  -\hat{f}\left(  \xi\right)
=\hat{\gamma}_{1}\left(  \xi\right)  \text{.}%
\end{array}
\right.  \label{linear1}%
\end{equation}
The solution $\hat{\phi}$ of the first equation in $\left(  \ref{linear1}%
\right)  $ can be written in the form
\[
\hat{\phi}\left(  t,\xi\right)  =\left(  J_{\Gamma_{0}}g\right)
^{\symbol{94}}p_{1,\xi}\left(  t\right)  +\left(  \Delta_{\Gamma_{0}%
}f+\left\vert A\right\vert ^{2}\right)  ^{\symbol{94}}p_{2,\xi}\left(
t\right)  .
\]
Therefore, to get a solution for $\left(  \ref{linear1}\right)  $, it suffices
for us to solve the following problem:%
\begin{equation}
\left\{
\begin{array}
[c]{l}%
\left(  J_{\Gamma_{0}}g\right)  ^{\symbol{94}}p_{1,\xi}^{\prime}\left(
-1\right)  +\left(  \Delta_{\Gamma_{0}}f+\left\vert A\right\vert ^{2}\right)
^{\symbol{94}}p_{2,\xi}^{\prime}\left(  -1\right)  -\hat{f}\left(  \xi\right)
=\hat{\gamma}_{-1}\left(  \xi\right)  ,\\
\left(  J_{\Gamma_{0}}g\right)  ^{\symbol{94}}p_{1,\xi}^{\prime}\left(
1\right)  +\left(  \Delta_{\Gamma_{0}}f+\left\vert A\right\vert ^{2}\right)
^{\symbol{94}}p_{2,\xi}^{\prime}\left(  1\right)  -\hat{f}\left(  \xi\right)
=\hat{\gamma}_{1}\left(  \xi\right)  .
\end{array}
\right.  \label{fg}%
\end{equation}
Due to the symmetry of $p_{1,\xi}$ and $p_{2,\xi},$ $\left(  \ref{fg}\right)
$ is equivalent to
\begin{equation}
\left\{
\begin{array}
[c]{l}%
\left(  J_{\Gamma_{0}}g\right)  ^{\symbol{94}}=\ \frac{\hat{\gamma}_{1}\left(
\xi\right)  -\hat{\gamma}_{-1}\left(  \xi\right)  }{2p_{1,\xi}^{\prime}\left(
1\right)  },\\
\left(  \Delta_{\Gamma_{0}}f+\left\vert A\right\vert ^{2}\right)
^{\symbol{94}}=\frac{2\hat{f}\left(  \xi\right)  +\hat{\gamma}_{-1}\left(
\xi\right)  +\hat{\gamma}_{1}\left(  \xi\right)  }{2p_{2,\xi}^{\prime}\left(
1\right)  }.
\end{array}
\right.  \label{fg1}%
\end{equation}
One can perform inverse Fourier transform for the first equation in this
system and then use Lemma \ref{lem inverse Jacobian op} to get a solution $g.$

We proceed to estimate the norm of $\bar{g}\left(  \cdot\right)  =g\left(
\frac{\cdot}{\varepsilon}\right) $. Put $\rho=\gamma_{1}-\gamma_{-1}.$ We
would like to show
\[
\ \left\Vert \left(  \frac{\hat{\rho}\left(  \xi\right)  }{p_{1,\xi}^{\prime
}\left(  1\right)  }\right)  ^{\vee}\right\Vert _{\beta_{0}+2,0;\varepsilon
}\leq C\left\Vert \rho\right\Vert _{\beta_{0}+2,1;\varepsilon}.
\]
Once this is proved, the estimate $\left(  \ref{equ2}\right)  $ follows from
the invertibility property of the Jacobi operator $J_{S_{1}}.$ Observe that
$\frac{1}{p_{1,\xi}^{\prime}\left(  1\right)  }$ is real analytic in
$\left\vert \xi\right\vert .$ By Lemma \ref{pi},
\[
\frac{1}{p_{1,\xi}^{\prime}\left(  1\right)  }=\left\vert \xi\right\vert
+O\left(  e^{-\frac{\left\vert \xi\right\vert }{2}}\right)  ,\text{ as
}\left\vert \xi\right\vert \rightarrow+\infty.
\]
Let us now estimate the inverse Fourier transform of $\left\vert
\xi\right\vert \hat{\rho}\left(  \xi\right)  .$ Using the fact that in
$\mathbb{R}^{4},$ inverse Fourier transform of $\left\vert \xi\right\vert $ is
equal to $c_{0}\left\vert x\right\vert ^{-5}$, where $c_{0}$ is a contant(see
for instances, \cite{G} Theorem 2.4.6, or \cite{Valdinoci}), we get
\[
\left(  \left\vert \xi\right\vert \hat{\rho}\left(  \xi\right)  \right)
^{\vee}\left(  z\right)  =c_{0}\text{P.V.}\int_{\mathbb{R}^{4}}\frac
{\rho\left(  \left\vert z\right\vert \right)  -\rho\left(  \left\vert
y\right\vert \right)  }{\left\vert z-y\right\vert ^{5}}dy.
\]
For $\left\vert z\right\vert $ large, we have%
\begin{align}
\left\vert \int_{\left\vert z-y\right\vert >\frac{\left\vert z\right\vert }%
{2}}\frac{\rho\left(  \left\vert z\right\vert \right)  -\rho\left(  \left\vert
y\right\vert \right)  }{\left\vert z-y\right\vert ^{5}}dy\right\vert  &  \leq
C\left\vert \rho\left(  \left\vert z\right\vert \right)  \right\vert
+\int_{\left\vert z-y\right\vert >\frac{\left\vert z\right\vert }{2}}%
\frac{\left\vert \rho\left(  \left\vert y\right\vert \right)  \right\vert
}{\left\vert z-y\right\vert ^{5}}dy\nonumber\\
&  \leq C\left\vert \rho\left(  \left\vert z\right\vert \right)  \right\vert
+\frac{1}{\left\vert z\right\vert ^{5}}\int_{\left\vert z-y\right\vert
>\frac{\left\vert z\right\vert }{2}}\left\vert \rho\left(  \left\vert
y\right\vert \right)  \right\vert dy\nonumber\\
&  \leq C\left\vert \rho\left(  \left\vert z\right\vert \right)  \right\vert
+\frac{\left\Vert \rho\right\Vert _{\beta_{0}+2,1;\varepsilon}}{1+\varepsilon
^{5}\left\vert z\right\vert ^{5}}. \label{c01}%
\end{align}
On the other hand,
\begin{align}
\left\vert \int_{1<\left\vert z-y\right\vert <\frac{\left\vert z\right\vert
}{2}}\frac{\rho\left(  \left\vert z\right\vert \right)  -\rho\left(
\left\vert y\right\vert \right)  }{\left\vert z-y\right\vert ^{5}%
}dy\right\vert  &  \leq\frac{C\left\Vert \rho\right\Vert _{\beta
_{0}+2,1;\varepsilon}}{\left(  1+\varepsilon\left\vert z\right\vert \right)
^{\beta_{0}+2}}\int_{1<\left\vert z-y\right\vert <\frac{\left\vert
z\right\vert }{2}}\frac{dy}{\left\vert z-y\right\vert ^{5}}\label{c2}\\
&  \leq\frac{C\left\Vert \rho\right\Vert _{\beta_{0}+2,1;\varepsilon}}{\left(
1+\varepsilon\left\vert z\right\vert \right)  ^{\beta_{0}+2}}.\nonumber
\end{align}
Furthermore, using the fact that $\rho\in C^{1,\alpha},$ we get
\begin{equation}
\left\vert \text{P.V.}\int_{0<\left\vert z-y\right\vert <1}\frac{\rho\left(
\left\vert z\right\vert \right)  -\rho\left(  \left\vert y\right\vert \right)
}{\left\vert z-y\right\vert ^{5}}dy\right\vert \leq C\left\Vert \rho
\right\Vert _{C^{1,\alpha}\left(  B_{1}\left(  z\right)  \right)  }.
\label{c3}%
\end{equation}
Inequalities $\left(  \ref{c01}\right)  ,\left(  \ref{c2}\right)  ,\left(
\ref{c3}\right)  $ give us the required weighted $C^{0}$ estimate of $\left(
\left\vert \xi\right\vert \hat{\rho}\left(  \xi\right)  \right)  ^{\vee
}\left(  z\right)  .$ Similarly, one can also get corresponding estimate for
the Holder norm. Hence the desired estimate $\left(  \ref{equ2}\right)  $ follows.

To find the solution $f$ for the second equation in $\left(  \ref{fg1}\right)
,$ we first consider the equation
\begin{equation}
\left(  f^{\prime\prime}+\frac{3}{l}f^{\prime}+\left\vert A\right\vert
^{2}\right)  ^{\symbol{94}}=\frac{2\hat{f}\left(  \xi\right)  +\hat{\gamma
}_{-1}\left(  \xi\right)  +\hat{\gamma}_{1}\left(  \xi\right)  }{2p_{2,\xi
}^{\prime}\left(  1\right)  }. \label{fg3}%
\end{equation}
This can be written as
\begin{equation}
\hat{f}\left(  \xi\right)  =-\frac{\left(  \left\vert A\right\vert
^{2}\right)  ^{\symbol{94}}}{\left\vert \xi\right\vert ^{2}-\frac{1}{p_{2,\xi
}^{\prime}\left(  1\right)  }}+\frac{\hat{\gamma}_{-1}\left(  \xi\right)
+\hat{\gamma}_{1}\left(  \xi\right)  }{2\left(  \left\vert \xi\right\vert
^{2}p_{2,\xi}^{\prime}\left(  1\right)  -1\right)  }. \label{fg2}%
\end{equation}
We may take inverse Fourier transform on both sides of $\left(  \ref{fg2}%
\right)  .$ Let
\[
K_{1}=\left(  \frac{1}{\left\vert \xi\right\vert ^{2}-\frac{1}{p_{2,\xi
}^{\prime}\left(  1\right)  }}\right)  ^{\vee},\text{ }K_{2}=\left(  \frac
{1}{\left\vert \xi\right\vert ^{2}p_{2,\xi}^{\prime}\left(  1\right)
-1}\right)  ^{\vee}.
\]
In view of the explicit formula of $p_{2,\xi}^{\prime}\left(  1\right)  ,$ we
know $\left\vert \xi\right\vert ^{2}-\frac{1}{p_{2,\xi}^{\prime}\left(
1\right)  }$ and $\left\vert \xi\right\vert ^{2}p_{2,\xi}^{\prime}\left(
1\right)  -1$ are positive and real analytic. This implies that $K_{1}$ and
$K_{2}$ decay fast enough at infinity. On the other hand,
\[
\frac{1}{\left\vert \xi\right\vert ^{2}-\frac{1}{p_{2,\xi}^{\prime}\left(
1\right)  }}\sim\frac{1}{\left\vert \xi\right\vert ^{2}},\text{ }\frac
{1}{\left\vert \xi\right\vert ^{2}p_{2,\xi}^{\prime}\left(  1\right)  -1}%
\sim\frac{1}{\left\vert \xi\right\vert },\text{ as }\left\vert \xi\right\vert
\rightarrow+\infty.
\]
Observe that the inverse Fourier transform of $\left\vert \xi\right\vert
^{-1}$ is $c_{1}\left\vert x\right\vert ^{-3}($see \cite{G}$).$ It follows
that $K_{2}$ has a singularity of the order $O\left(  \left\vert x\right\vert
^{-3}\right)  $ near origin. The estimate $\left(  \ref{equ1}\right)  $ for
solution $f$ of $\left(  \ref{fg2}\right)  $ then follows from routine
calculation in potential theory. Since by Lemma \ref{Lap}, $\Delta_{\Gamma
_{0}}f$ is a small perturbation of $f^{\prime\prime}+\frac{3}{l}f^{\prime},$
then we can use a perturbation argument to show the same estimate for solution
$f$ of the second equation in $\left(  \ref{fg1}\right)  .$ This finishes the proof.
\end{proof}

With the model linear problem understood, we proceed to solve the nonlinear
problem. Let $\phi_{0}$ be the solution of the problem
\[
\left\{
\begin{array}
[c]{l}%
\partial_{t}^{2}\phi_{0}+\partial_{l}^{2}\phi_{0}+\frac{3}{l}\partial_{l}%
\phi_{0}=t\left\vert A\right\vert ^{2},\text{ in }\Omega_{h},\\
\phi_{0}=0\text{ on }\partial\Omega_{h}.
\end{array}
\right.
\]

\begin{lemma}
\label{decay}Suppose $\left\Vert f-f_{0}\right\Vert _{\beta_{0},2;\varepsilon
}\leq C\varepsilon^{2},\left\Vert \bar{g}\right\Vert _{\beta_{0}%
,2;1,\symbol{94}}\leq C\varepsilon$, and $\left\Vert \phi-\phi_{0}\right\Vert
_{\beta_{0},2;\ast}\leq C\varepsilon^{2}.$ There holds
\begin{align*}
\left\Vert E_{3,1}-E_{3,-1}\right\Vert _{\beta_{0}+2,1;\varepsilon}  &  \leq
C\varepsilon^{3},\\
\left\Vert E_{3,1}+E_{3,-1}\right\Vert _{\beta_{0},1;\varepsilon}  &  \leq
C\varepsilon^{3}.
\end{align*}

\end{lemma}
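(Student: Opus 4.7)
The plan is to reduce Lemma \ref{decay} to a direct expansion of $E_{3,i}$ using the a priori sizes supplied by the hypotheses. Substituting $h_i' = g' + if'$ (for $i = \pm 1$) into the formula from Lemma \ref{l3} and decomposing the boundary trace $\partial_t\phi|_{t=i} = u_+ + iu_-$ into its symmetric and antisymmetric parts in $i$, the quantity $E_{3,i}$ becomes a quadratic polynomial in the four small entries $f$, $f'$, $g'$, $u_\pm$, with $\mathfrak{g}^{1,1}$ as a bounded coefficient. In this presentation the sum $E_{3,1}+E_{3,-1}$ collects the terms that are even in $i$ while the difference $E_{3,1}-E_{3,-1}$ collects the odd ones, and the two estimates then reduce to the same pattern.

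The first step is to record the pointwise sizes forced by the hypotheses. Since $|A|^2 = O(\varepsilon^2(1+\varepsilon l)^{-2})$ on $S_\varepsilon$ and the symbol $|\xi|^2 - 1/p_{2,\xi}'(1)$ is elliptic of order two, the defining formula for $f_0$ yields $\|f_0\|_{\beta_0,2;\varepsilon} = O(\varepsilon^2)$, hence $|f|, |f'|, |f''| = O(\varepsilon^2(1+\varepsilon l)^{-\beta_0})$. The rescaling $\bar g(l) = g(l/\varepsilon)$ together with the increasing weights in the $\bar{\mathcal{B}}_{\beta_0,2;1}$ norm give $|g| = O(\varepsilon(1+\varepsilon l)^{-\beta_0})$ and $|g'| = O(\varepsilon^2(1+\varepsilon l)^{-\beta_0-1})$. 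Finally, the forcing $t|A|^2$ in the problem defining $\phi_0$ is odd in $t$, so by uniqueness $\phi_0$ is odd in $t$ and $\partial_t\phi_0$ is even; the $u_-$ part of the boundary trace of $\phi$ therefore comes entirely from the perturbation $\phi-\phi_0$, and both $u_+, u_- = O(\varepsilon^2(1+\varepsilon l)^{-\beta_0})$.

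Since every term of $E_{3,i}$ is a product of two small quantities, we have the pointwise bound $|E_{3,i}| = O(\varepsilon^4(1+\varepsilon l)^{-2\beta_0})$. For the sum the required weight is $(1+\varepsilon l)^{\beta_0}$, so multiplication gives $\varepsilon^4(1+\varepsilon l)^{-\beta_0} \leq C\varepsilon^4$, comfortably below the claimed $\varepsilon^3$. For the difference the required weight is the stronger $(1+\varepsilon l)^{\beta_0+2}$, but the purely even-in-$i$ contributions (such as $f^2$, $g'^2+f'^2$ and the symmetric quadratic in $u_\pm$) cancel; the surviving cross terms $g'u_-$, $f'u_+$, $u_+ u_-$, and $g'f'$ are still $O(\varepsilon^4(1+\varepsilon l)^{-2\beta_0})$, so the weighted bound is $\varepsilon^4(1+\varepsilon l)^{-\beta_0+2} \leq C\varepsilon^4$ provided $\beta_0 > 2$. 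The $C^{1,\alpha}$ parts of both norms follow from the same expansion applied to difference quotients together with the product rule for weighted Hölder norms.

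The main difficulty is entirely bookkeeping for the difference estimate, where the extra two powers of $(1+\varepsilon l)$ in the weight must be absorbed by the decay already present in the factors. This works because the even-in-$i$ cancellation is exact and the standing assumption $\beta_0 > 2$ provides just enough slack; no subtler argument is needed beyond careful tracking of the weights and the observation that $\phi_0$ is odd in $t$ so that $u_-$ is automatically of perturbative order.
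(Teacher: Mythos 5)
Your high-level plan is the same as the paper's: expand $E_{3,i}$ with $h_i'=g'+if'$, split the boundary trace of $\partial_t\phi$ into parts even and odd in $i$, note that $\phi_0$ is odd in $t$ so $\partial_t\phi_0$ is the same on both boundaries, and exploit cancellation of even-in-$i$ terms (in particular $f^2$) in the difference. However, the quantitative bookkeeping rests on two decay claims that are false, and they trivialize exactly the terms the paper has to work for. First, $\left\Vert f_0\right\Vert_{\beta_0,2;\varepsilon}=O(\varepsilon^2)$ does not follow from the ellipticity of the symbol $\left\vert\xi\right\vert^2-1/p_{2,\xi}'(1)$: inverting a nonvanishing symbol is an operator of order zero, so $f_0$ inherits only the $(1+\varepsilon l)^{-2}$ decay of $\left\vert A\right\vert^2$ and does \emph{not} lie in the $\beta_0$-weighted space with $\beta_0>2$ (this is precisely why the hypotheses, and Proposition \ref{L2}, are phrased in terms of $f-f_0$, while $f$ itself is only assumed in $\mathcal{B}_{2,2;\varepsilon}$). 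Second, your even part $u_+$ of the boundary trace contains $\partial_t\phi_0(\pm1,l)$, which again decays only like $(1+\varepsilon l)^{-2}$ because the forcing $t\left\vert A\right\vert^2$ has decay rate $2$; so $u_+=O(\varepsilon^2(1+\varepsilon l)^{-\beta_0})$ is wrong. With the correct rates, the critical surviving term in $E_{3,1}-E_{3,-1}$ is the cross term $f'u_+\sim f_0'\,\partial_t\phi_0$: if $f'$ is only allowed the rate-$2$ decay coming from $f\in\mathcal{B}_{2,2;\varepsilon}$, this is of size $\varepsilon^4(1+\varepsilon l)^{-4}$, which is \emph{not} bounded against the weight $(1+\varepsilon l)^{\beta_0+2}$ since $\beta_0+2>4$. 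The paper closes this precisely by invoking the improved derivative decay $\left\Vert f_0'\right\Vert_{3,1;\varepsilon}\leq C\varepsilon^2$ (together with $\beta_0-2$ small, so $\beta_0<3$); your proposal offers no valid justification for any such improvement, so the difference estimate as written has a genuine gap.

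A second, smaller inaccuracy: the even-in-$i$ cancellation is not ``exact.'' The coefficient $\mathfrak{g}^{1,1}$ (and the trace of $\partial_t\phi$) is evaluated on two different surfaces $\Gamma_{-1+h_{-1}}$ and $\Gamma_{1+h_1}$, so terms like $\mathfrak{g}^{1,1}f'^2$ or $\mathfrak{g}^{1,1}h_i'^2(\partial_t\phi)^2$ do not drop out of the difference identically; one must use the asymptotic expansion of $\mathfrak{g}^{1,1}$ in the normal variable (as the paper does for $t^2\mathfrak{g}^{1,1}f'^2$) and the evenness of $\partial_t\phi_0$ plus the $\beta_0$-decay of $\phi-\phi_0$ to show these residual differences are $O(\varepsilon^3)$ in the $\beta_0+2$ weighted norm. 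These residuals are harmless once the correct decays (including the improved decay of $f_0'$) are in hand, but they do need to be estimated rather than declared zero.
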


\begin{proof}
Recall that
\[
E_{3,i}=-\frac{1}{2}\left(  1+\mathfrak{g}^{1,1}h_{i}^{\prime2}\right)
\left(  \partial_{t}\phi\right)  ^{2}+\frac{\mathfrak{g}^{1,1}h_{i}^{\prime}%
}{1+f}\partial_{t}\phi+\frac{1}{2}f^{2}-\frac{1}{2}\mathfrak{g}^{1,1}\left(
g^{\prime}+tf^{^{\prime}}\right)  ^{2}.\text{ }%
\]
Using the boundedness of $\mathfrak{g}^{1,1},$ taking into account of the fact
that%
\[
\left\Vert g^{\prime}\right\Vert _{3,1;\varepsilon}\leq C\varepsilon
^{2},\left\Vert f\right\Vert _{2,2;\varepsilon}\leq C\varepsilon
^{2},\left\Vert \partial_{t}\phi\left(  \pm1,l\right)  -\partial_{t}\phi
_{0}\left(  \pm1,l\right)  \right\Vert _{\beta_{0},2;\varepsilon}\leq
C\varepsilon^{2},
\]
we find that
\[
\left\Vert \mathfrak{g}^{1,1}h_{i}^{\prime2}\left(  \partial_{t}\phi\right)
^{2}\right\Vert _{\beta_{0}+2,1;\varepsilon}+\left\Vert \mathfrak{g}%
^{1,1}\left(  g^{\prime}\right)  ^{2}\right\Vert _{\beta_{0}+2,1;\varepsilon
}+\left\Vert \mathfrak{g}^{1,1}g^{\prime}f\right\Vert _{\beta_{0}%
+2,1;\varepsilon}\leq C\varepsilon^{3}.
\]
Now we subtract $E_{3,1}$ with $E_{3,-1},$ the term $f^{2}$ will be cancelled.
Additionally, using the asymptotic expansion of $\mathfrak{g}^{1,1},$ we know
\[
\left\Vert \left(  t^{2}\mathfrak{g}^{1,1}f^{^{\prime}2}\right)
|_{t=-1}-\left(  t^{2}\mathfrak{g}^{1,1}f^{^{\prime}2}\right)  |_{t=1}%
\right\Vert _{\beta_{0}+2,1;\varepsilon}\leq C\varepsilon^{3}.
\]
Furthermore, observing that $\left\Vert f_{0}^{\prime}\right\Vert
_{3,1;\varepsilon}\leq C\varepsilon^{2},$ we get
\begin{align*}
&  \left\Vert \left(  \mathfrak{g}^{1,1}h_{-1}^{\prime}\partial_{t}%
\phi\right)  |_{t=-1}-\left(  \mathfrak{g}^{1,1}h_{1}^{\prime}\partial_{t}%
\phi\right)  |_{t=1}\right\Vert _{\beta_{0}+2,1;\varepsilon}\\
&  \leq C\varepsilon^{3}+C\left\Vert \left(  f_{0}^{\prime}\partial_{t}%
\phi_{0}\right)  |_{t=-1}+\left(  f_{0}^{\prime}\partial_{t}\phi_{0}\right)
|_{t=1}\right\Vert _{\beta_{0}+2,1;\varepsilon}\\
&  \leq C\varepsilon^{3}.
\end{align*}
Hence we get
\begin{align*}
\left\Vert E_{3,1}-E_{3,-1}\right\Vert _{\beta_{0}+2,1;\varepsilon}  &  \leq
C\varepsilon^{3}+\frac{1}{2}\left\Vert \partial_{t}\phi_{0}\left(
-1,l\right)  ^{2}-\partial_{t}\phi_{0}\left(  1,l\right)  ^{2}\right\Vert
_{\beta_{0}+2,1;\varepsilon}\\
&  \leq C\varepsilon^{3}.
\end{align*}
The proof of $\left\Vert E_{3,1}+E_{3,-1}\right\Vert _{\beta_{0}%
,1;\varepsilon}\leq C\varepsilon^{3}$ is similar.
\end{proof}

To proceed, let us consider the nonlinear problem
\begin{equation}
\left\{
\begin{array}
[c]{l}%
\Delta\phi=J_{\Gamma_{0}}g+\left(  \Delta_{\Gamma_{0}}f+\left\vert
A\right\vert ^{2}\right)  t-E_{1}+E_{2}+\Delta_{\Gamma_{0}}w-\Delta
_{\Gamma_{s}}w,\text{ in }\Omega_{h},\\
\phi=0\text{ on }\partial\Omega_{h}.
\end{array}
\right.  \label{nonlinear1}%
\end{equation}
Let us introduce the notation%
\begin{equation}
P\left(  f,\bar{g},\phi\right)  :=-E_{1}+E_{2}+\Delta_{\Gamma_{0}}%
w-\Delta_{\Gamma_{s}}w+\partial_{t}^{2}\phi+\partial_{l}^{2}\phi+\frac{3}%
{l}\partial_{l}\phi-\Delta\phi. \label{P}%
\end{equation}
We will investigate the Lipschitz dependence of $P$ on $f$ and $\bar{g}.$

\begin{lemma}
\label{P1}For $f_{i}\in\mathcal{B}_{2,2;\varepsilon},\bar{g}_{i}%
\in\mathcal{\bar{B}}_{\beta_{0},2;1},$ with $\left\Vert f_{i}-f_{0}\right\Vert
_{\beta_{0},2;\varepsilon}\leq C\varepsilon^{2},\left\Vert \bar{g}%
_{i}\right\Vert _{\beta_{0},2;1,\symbol{94}}\leq C\varepsilon,$ $i=1,2,$ we
have%
\[
\left\Vert P\left(  f_{1},\bar{g}_{1},\phi\right)  -P\left(  f_{2},\bar{g}%
_{2},\phi\right)  \right\Vert _{\beta_{0}+2,0;\varepsilon}=O\left(
\varepsilon^{2}\right)  \left\Vert f_{1}-f_{2}\right\Vert _{\beta
_{0},2;\varepsilon}+O\left(  \varepsilon^{3}\right)  \left\Vert \bar{g}%
_{1}-\bar{g}_{2}\right\Vert _{\beta_{0},2;1,\symbol{94}}.
\]

\end{lemma}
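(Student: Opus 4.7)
The plan is to decompose $P(f,\bar g,\phi)$ into the four groups of terms appearing in (\ref{P})---namely the $E_{1}$ contribution, the $E_{2}$ contribution, the geometric discrepancy $\Delta_{\Gamma_{0}}w-\Delta_{\Gamma_{s}}w$, and the coordinate discrepancy $\partial_{t}^{2}\phi+\partial_{l}^{2}\phi+\frac{3}{l}\partial_{l}\phi-\Delta\phi$---and to estimate the Lipschitz constant of each group separately with respect to $(f,\bar g)$, with $\phi$ held fixed. In each group I would use the telescoping identity $F(f_{1},\bar g_{1})-F(f_{2},\bar g_{2})=[F(f_{1},\bar g_{1})-F(f_{2},\bar g_{1})]+[F(f_{2},\bar g_{1})-F(f_{2},\bar g_{2})]$ to reduce to a linear dependence, and then apply the a priori bounds $\|f_{i}-f_{0}\|_{\beta_{0},2;\varepsilon}\le C\varepsilon^{2}$ (with the known size of $f_{0}$, which is of order $|A|^{2}=O(\varepsilon^{2}(1+\varepsilon l)^{-2})$) and $\|\bar g_{i}\|_{\beta_{0},2;1,\wedge}\le C\varepsilon$, so that $\|g_{i}\|_{\beta_{0},2;\varepsilon}=O(\varepsilon)$ after unrescaling, to extract the required small prefactors.

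For the $E_{1}$ terms from Lemma \ref{l1}, every summand is at least bilinear in $(f,g)$, so one factor in each difference is already small of order $\varepsilon^{2}$ (from the $f$-norm) or $\varepsilon$ (from the $g$-norm). This yields exactly the claimed $\varepsilon^{2}$ coefficient in front of $\|f_{1}-f_{2}\|_{\beta_{0},2;\varepsilon}$ and the $\varepsilon^{3}$ coefficient in front of $\|\bar g_{1}-\bar g_{2}\|_{\beta_{0},2;1,\wedge}$, once the two extra units of weight required on the left-hand side are absorbed by the corresponding decay of the remaining factor. For $E_{2}$, I use $|k_{i}|\lesssim \varepsilon/(1+\varepsilon l)$ so that the sum $\sum s^{2}k_{i}^{3}/(1-sk_{i})$ is already $O(\varepsilon^{3}(1+\varepsilon l)^{-3})$, and the Lipschitz dependence on $f$ enters only through the smooth factors $(1+f)^{-1}$ and through $s=t(1+f)+g$, both of which give a further small multiplier.

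For the geometric term $\Delta_{\Gamma_{0}}w-\Delta_{\Gamma_{s}}w$, I apply Lemma \ref{Lap} which provides the bound $O(\varepsilon/(1+\varepsilon l))$ times $\|\partial_{l}w\|+\|\partial_{l}^{2}w\|$; since $w$ is explicitly rational in $f$ and affine in $g,s$, the Lipschitz dependence on $(f,\bar g)$ comes from the chain rule on $(1+f)^{-1}$ and from $\partial_{l}g,\partial_{l}f$, each of which carries the decay built into the weighted norms. The $\varepsilon$ factor from Lemma \ref{Lap} combined with one additional power from the size of the coefficient yields the stated $\varepsilon^{2}$ and $\varepsilon^{3}$ prefactors.

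The most delicate piece will be the coordinate discrepancy $\partial_{t}^{2}\phi+\partial_{l}^{2}\phi+\frac{3}{l}\partial_{l}\phi-\Delta\phi$. Writing $\Delta=\partial_{s}^{2}+\Delta_{\Gamma_{s}}-H_{\Gamma_{s}}\partial_{s}$ in Fermi coordinates and then changing variables via $t=(s-g(l))/(1+f(l))$, I would explicitly compute the residual operator; it takes the schematic form
\begin{equation*}
A_{1}(f,g,f',g')\,\partial_{t}^{2}\phi+A_{2}(f,f',g,g',f'',g'')\,\partial_{t}\phi+A_{3}(f)\,\partial_{l}^{2}\phi+A_{4}(f,f')\,\partial_{l}\partial_{t}\phi+\cdots ,
\end{equation*}
where each coefficient $A_{j}$ vanishes at $(f,g)=(0,0)$ up to the model order. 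The Lipschitz bound then follows by differentiating each $A_{j}$ with respect to $(f,\bar g)$, multiplying by the fixed quantity $\|\phi\|_{\beta_{0},2;\ast}\le C\varepsilon^{2}$, and checking that in the $\mathcal{B}_{\beta_{0}+2,0;\varepsilon}$ norm the accumulated prefactors are exactly $O(\varepsilon^{2})$ against $\|f_{1}-f_{2}\|_{\beta_{0},2;\varepsilon}$ and $O(\varepsilon^{3})$ against $\|\bar g_{1}-\bar g_{2}\|_{\beta_{0},2;1,\wedge}$. The obstacle here is essentially bookkeeping in weighted Hölder norms: the extra $\varepsilon^{3}$ on the $\bar g$ side combines one power of $\varepsilon$ from rescaling $S_{1}\to S_{\varepsilon}$ with two more powers supplied by the curvature decay of the metric coefficients, and one must track these weights consistently across every term produced by the change of variable. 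Once this bookkeeping is complete, summing the four contributions gives the estimate claimed in the lemma.
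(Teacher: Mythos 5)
Your proposal is correct and follows essentially the same route as the paper's proof: a term-by-term Lipschitz estimate of $E_{1}$, $E_{2}$ and $\Delta_{\Gamma_{0}}w-\Delta_{\Gamma_{s}}w$, exploiting the at-least-bilinear structure of $E_{1}$, the smallness $\left\Vert f_{i}\right\Vert =O(\varepsilon^{2})$, $\left\Vert \bar{g}_{i}\right\Vert =O(\varepsilon)$, the curvature decay $\left\vert A\right\vert ^{2}=O\left(\varepsilon^{2}(1+\varepsilon l)^{-2}\right)$, and Lemma \ref{Lap} for the geometric discrepancy, with the same weighted-norm bookkeeping. Your additional treatment of the $\phi$-dependent coordinate discrepancy term (with $\phi$ held fixed) is a point the paper's own proof leaves implicit, but you handle it in the same spirit and it does not change the argument.
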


\begin{proof}
Let us consider the terms in $\left(  \ref{P}\right)  $. Recall that
\[
E_{1}\left(  f,\bar{g}\right)  =-tf\Delta_{\Gamma_{0}}f+\Delta_{\Gamma_{0}%
}\left(  fg\right)  -g\Delta_{\Gamma_{0}}f+\Delta_{\Gamma_{0}}\left[  \left(
s-g\right)  \frac{f^{2}}{1+f}\right]  .
\]
We compute directly that%
\begin{equation}
f_{1}\Delta_{\Gamma_{0}}f_{1}-f_{2}\Delta_{\Gamma_{0}}f_{2}=f_{1}%
\Delta_{\Gamma_{0}}\left(  f_{1}-f_{2}\right)  +\Delta_{\Gamma_{0}}%
f_{2}\left(  f_{1}-f_{2}\right)  . \label{e1}%
\end{equation}
Next, since
\[
\Delta_{\Gamma_{0}}\left(  fg\right)  -g\Delta_{\Gamma_{0}}f=2f^{\prime
}g^{\prime}+f\Delta_{\Gamma_{0}}g,
\]
we have%
\begin{align}
&  \left[  \Delta_{\Gamma_{0}}\left(  f_{1}g_{1}\right)  -g_{1}\Delta
_{\Gamma_{0}}f_{1}\right]  -\left[  \Delta_{\Gamma_{0}}\left(  f_{2}%
g_{2}\right)  -g_{2}\Delta_{\Gamma_{0}}f_{2}\right] \nonumber\\
&  =2\left(  f_{1}^{\prime}-f_{2}^{\prime}\right)  g_{1}^{\prime}%
+2f_{2}^{\prime}\left(  g_{1}^{\prime}-g_{2}^{\prime}\right) \nonumber\\
&  +\Delta_{\Gamma_{0}}g_{1}\left(  f_{1}-f_{2}\right)  +f_{2}\Delta
_{\Gamma_{0}}\left(  g_{1}-g_{2}\right)  . \label{e2}%
\end{align}
Now combining $\left(  \ref{e1}\right)  ,\left(  \ref{e2}\right)  $ and
performing a similar computation for the term $\Delta_{\Gamma_{0}}\left[
\left(  s-g\right)  \frac{f^{2}}{1+f}\right]  ,$ we obtain
\[
\left\Vert E_{1}\left(  f_{1},\bar{g}_{1}\right)  -E_{1}\left(  f_{2},\bar
{g}_{2}\right)  \right\Vert _{\beta_{0}+2,0;\varepsilon}=O\left(
\varepsilon^{2}\right)  \left\Vert f_{1}-f_{2}\right\Vert _{\beta
_{0},2;\varepsilon}+O\left(  \varepsilon^{3}\right)  \left\Vert \bar{g}%
_{1}-\bar{g}_{2}\right\Vert _{\beta_{0},2;1}.
\]

For the term
\[
E_{2}\left(  f,g\right)  =\frac{1}{1+f}\sum\limits_{i=1}^{6}\frac{s^{2}%
k_{i}^{3}}{1-sk_{i}}-\frac{fg\left\vert A\right\vert ^{2}}{1+f},
\]
we have%
\[
E_{2}\left(  f_{1},g_{1}\right)  -E_{2}\left(  f_{2},g_{2}\right)
=-\left\vert A\right\vert ^{2}\left(  \frac{f_{1}g_{1}}{1+f_{1}}-\frac
{f_{2}g_{2}}{1+f_{2}}\right)  +\frac{f_{2}-f_{1}}{(1+f_{1})(1+f_{2})}%
\sum_{i=1}^{6}\frac{s^{2}k_{i}^{3}}{1-sk_{i}}.
\]
Since $\left\vert A\right\vert ^{2}=O\left(  \frac{\varepsilon^{2}}{\left(
1+\varepsilon l\right)  ^{2}}\right)  ,$ we obtain
\[
\left\Vert E_{2}\left(  f_{1},\bar{g}_{1}\right)  -E_{2}\left(  f_{2},\bar
{g}_{2}\right)  \right\Vert _{\beta_{0}+2,0;\varepsilon}=O\left(
\varepsilon^{2}\right)  \left\Vert f_{1}-f_{2}\right\Vert _{\beta
_{0},2;\varepsilon}+O\left(  \varepsilon^{3}\right)  \left\Vert \bar{g}%
_{1}-\bar{g}_{2}\right\Vert _{\beta_{0},2;1}.
\]

It remains to analyze the term $\Delta_{\Gamma_{0}}w-\Delta_{\Gamma_{s}}w.$ To
handle it, we simply note that by Lemma $\ref{Lap}$ the following expansion
holds:
\begin{align*}
\Delta_{\Gamma_{0}}w-\Delta_{\Gamma_{s}}w  &  =O\left(  \frac{\varepsilon^{2}%
}{\left(  1+\varepsilon l\right)  ^{2}}\right)  \partial_{l}w+O\left(
\frac{\varepsilon}{1+\varepsilon l}\right)  \partial_{l}^{2}w\\
&  =O\left(  \frac{\varepsilon^{2}}{\left(  1+\varepsilon l\right)  ^{2}%
}\right)  \left(  \frac{-g^{\prime}\left(  1+f\right)  -\left(  s-g\right)
f^{\prime}}{\left(  1+f\right)  ^{2}}\right) \\
&  +O\left(  \frac{\varepsilon}{1+\varepsilon l}\right)  \left(
\frac{-g^{\prime}\left(  1+f\right)  -\left(  s-g\right)  f^{\prime}}{\left(
1+f\right)  ^{2}}\right)  ^{\prime},
\end{align*}
which yields the desired estimate:
\begin{align*}
&  \left\Vert \left(  \Delta_{\Gamma_{0}}w-\Delta_{\Gamma_{s}}w\right)
|_{\left(  f_{1},g_{1}\right)  }-\left(  \Delta_{\Gamma_{0}}w-\Delta
_{\Gamma_{s}}w\right)  |_{\left(  f_{2},g_{2}\right)  }\right\Vert _{\beta
_{0}+2,0;\varepsilon}\\
&  =O\left(  \varepsilon^{2}\right)  \left\Vert f_{1}-f_{2}\right\Vert
_{\beta_{0},2;\varepsilon}+O\left(  \varepsilon^{3}\right)  \left\Vert \bar
{g}_{1}-\bar{g}_{2}\right\Vert _{\beta_{0},2;1}.
\end{align*}
The proof is thus completed.
\end{proof}

\begin{lemma}
\label{f}Given $f,\bar{g},$ with $\left\Vert f-f_{0}\right\Vert _{\beta
_{0},2;\varepsilon}\leq C\varepsilon^{2},\left\Vert \bar{g}\right\Vert
_{\beta_{0},2;1,\symbol{94}}\leq C\varepsilon$, problem $\left(
\ref{nonlinear1}\right)  $ has a unique solution $\phi$ with
\[
\left\Vert \phi-\phi_{0}\right\Vert _{\beta_{0}+1,2;\ast}\leq C\varepsilon
^{2}.
\]
If we write this solution as $\Phi\left(  f,\bar{g}\right)  ,$ then
\[
\left\Vert \Phi\left(  f_{1},\bar{g}_{1}\right)  -\Phi\left(  f_{2},\bar
{g}_{2}\right)  \right\Vert _{\beta_{0}+1,2;\ast}\leq C\left\Vert f_{1}%
-f_{2}\right\Vert _{\beta_{0},2;\varepsilon}+C\varepsilon^{2}\left\Vert
\bar{g}_{1}-\bar{g}_{2}\right\Vert _{\beta_{0},2;1,\symbol{94}}.
\]

\end{lemma}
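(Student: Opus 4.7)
The plan is to recast (\ref{nonlinear1}) as a fixed point equation for $\psi := \phi - \phi_0$ and solve it via Banach's contraction principle in a small ball of $\mathcal{B}_{\beta_0+1, 2; \ast}$. Subtracting the defining equation of $\phi_0$ from (\ref{nonlinear1}) and using the definition (\ref{P}) of $P$, the problem is equivalent to
\[
\psi = L_1\!\left(J_{\Gamma_0} g + t\,\Delta_{\Gamma_0} f + P(f, \bar{g}, \phi_0 + \psi)\right) =: T(\psi),
\]
with $L_1$ the inverse operator furnished by Lemma \ref{estimate1}.

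First I would show that $T$ maps the closed ball $\{\|\psi\|_{\beta_0+1, 2; \ast} \leq K\varepsilon^2\}$ into itself for a suitably large constant $K$. Via the rescaling $g(y) = \bar{g}(\varepsilon y)$, the Jacobi operator produces an explicit factor $\varepsilon^2$, and together with the hypothesis $\|\bar{g}\|_{\beta_0, 2; 1, \symbol{94}} \leq C\varepsilon$ this yields $\|J_{\Gamma_0} g\|_{\beta_0+2, 0; \varepsilon} \leq C\varepsilon^3$. For the term $t\Delta_{\Gamma_0} f$, splitting $f = f_0 + (f - f_0)$ and invoking the defining Fourier identity (\ref{fg2}) of $f_0$ together with the hypothesis $\|f - f_0\|_{\beta_0, 2; \varepsilon} \leq C\varepsilon^2$ keeps this term within the required size. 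Finally $P(f, \bar{g}, \phi_0 + \psi)$ is controlled by applying Lemma \ref{P1} to compare with the reference value $P(f_0, 0, \phi_0)$, the latter itself being of order $\varepsilon^2$ by direct inspection of the explicit formulas for $E_1, E_2$ combined with the tangential Laplacian bound of Lemma \ref{Lap}. Composition with $L_1$ and the operator bound from Lemma \ref{estimate1} then close the self-map property.

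The contraction property of $T$ follows from the observation that the $\psi$-dependence of $T(\psi)$ enters only through $P$, and inside $P$ the only $\phi$-dependence lives in the discrepancy $(\partial_t^2 + \partial_l^2 + \tfrac{3}{l}\partial_l - \Delta)(\phi_0 + \psi)$; by Lemma \ref{Lap} this defines an operator of norm $O(\varepsilon)$ from $\mathcal{B}_{\beta_0+1, 2; \ast}$ into $\mathcal{B}_{\beta_0+1, 0; \ast}$. After composition with $L_1$ the contraction constant is $O(\varepsilon)$, so $T$ is a contraction for $\varepsilon$ small and Banach's theorem produces a unique fixed point, yielding the desired solution $\Phi(f, \bar{g}) = \phi_0 + \psi$ satisfying the stated norm estimate.

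For the Lipschitz estimate in $(f, \bar{g})$, I would subtract the fixed point equations for $\Phi(f_1, \bar{g}_1)$ and $\Phi(f_2, \bar{g}_2)$ to obtain
\[
\Phi(f_1, \bar{g}_1) - \Phi(f_2, \bar{g}_2) = L_1\!\left(J_{\Gamma_0}(g_1 - g_2) + t\,\Delta_{\Gamma_0}(f_1 - f_2) + [P_1 - P_2]\right).
\]
The linear contributions are bounded by $C\|f_1 - f_2\|_{\beta_0, 2; \varepsilon}$ from the tangential-Laplacian piece and by $C\varepsilon^2 \|\bar{g}_1 - \bar{g}_2\|_{\beta_0, 2; 1, \symbol{94}}$ from the Jacobi piece after the $\varepsilon^2$-rescaling. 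The nonlinear difference $P_1 - P_2$ is split into the part where only $(f, \bar{g})$ varies, handled by Lemma \ref{P1}, and the part where only $\Phi$ varies, which is of size $O(\varepsilon)\|\Phi(f_1, \bar g_1) - \Phi(f_2, \bar g_2)\|_{\beta_0+1, 2; \ast}$ and hence can be absorbed into the left side by the contraction estimate above. The principal technical obstacle is weight bookkeeping across the three distinct scales ($\beta_0$ on $S_\varepsilon$, $\beta_0$ on $S_1$ with the $\symbol{94}$-norm, and $\beta_0+1$ for $\psi$), ensuring that each term produced by $P$ and the rescaling identities lands in a space compatible with the mapping properties of $L_1$, so that the fixed-point iteration actually closes at size $\varepsilon^2$.
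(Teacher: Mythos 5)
Your proposal is essentially the paper's own argument: the paper likewise rewrites (\ref{nonlinear1}) as a fixed point problem for $\phi^{\ast}=\phi-\phi_{0}$ through the operator $L_{1}$ of Lemma \ref{estimate1}, obtains the contraction constant $O\left(\varepsilon\right)$ from the Fermi-coordinate expansion of $\Delta$ (your ``discrepancy operator''; the paper's expansion also records the $\frac{1}{\left(1+f\right)^{2}}\partial_{t}^{2}$ and mean-curvature terms beyond Lemma \ref{Lap}, which are $O\left(\varepsilon^{2}\right)$ and harmless), and gets the Lipschitz dependence on $\left(f,\bar{g}\right)$ by subtracting the two fixed-point equations and using Lemma \ref{P1} together with the explicit forms of $E_{1},E_{2}$. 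The weighted-norm bookkeeping that you flag as the main technical point is left at essentially the same level of detail in the paper, so nothing is missing relative to its proof.
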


\begin{proof}
We may recast $\left(  \ref{nonlinear1}\right)  $ as
\[
\phi=L_{1}\left[  J_{\Gamma_{0}}g+\left(  \Delta_{\Gamma_{0}}f+\left\vert
A\right\vert ^{2}\right)  t\right]  +L_{1}\left[  P\left(  f,\bar{g}%
,\phi\right)  \right]  ,
\]
where $\phi=\phi_{0}+\phi^{\ast},\phi^{\ast}\in\mathcal{B}_{\beta_{0}%
+1,2;\ast}.$ In other words,
\[
\phi^{\ast}=\bar{L}_{1}\left(  f,\bar{g},\phi^{\ast}\right)  :=L_{1}\left[
J_{\Gamma_{0}}g+\left(  \Delta_{\Gamma_{0}}f+\left\vert A\right\vert
^{2}\right)  t\right]  +L_{1}\left[  P\left(  f,\bar{g},\phi_{0}+\phi^{\ast
}\right)  \right]  -\phi_{0},
\]
We regard it as a fixed point problem of $\phi^{\ast}$ for the map $\bar
{L}_{1}.$ Observe that although $\phi_{0}$ only belongs to $\mathcal{B}%
_{2,2;\ast},$ the function $P\left(  f,\bar{g},\phi_{0}+\phi^{\ast}\right)  $
actually lies in $\mathcal{B}_{\beta_{0}+1,0;\ast}.$ Now we show $\bar{L}_{1}$
is a contraction map. Indeed, by Lemma \ref{Lap},
\begin{align*}
\Delta\phi &  =\partial_{s}^{2}\phi+\Delta_{\Gamma_{s}}\phi-H_{\Gamma_{s}%
}\partial_{s}\phi\\
&  =\frac{1}{\left(  1+f\right)  ^{2}}\partial_{t}^{2}\phi+\Delta_{\Gamma_{0}%
}\phi+O\left(  \frac{\varepsilon}{\left(  1+\varepsilon l\right)  ^{2}%
}\right)  \partial_{l}\phi\\
&  +O\left(  \frac{\varepsilon}{1+\varepsilon l}\right)  \partial_{l}^{2}%
\phi+O\left(  \sum k_{i}^{2}\right)  \partial_{t}\phi.
\end{align*}
Using this expansion, we can verify that
\[
\left\Vert \bar{L}_{1}\left(  f,\bar{g},\phi_{1}^{\ast}\right)  -\bar{L}%
_{1}\left(  f,\bar{g},\phi_{2}^{\ast}\right)  \right\Vert _{\beta_{0}%
+1,2;\ast}\leq C\varepsilon\left\Vert \phi_{1}^{\ast}-\phi_{2}^{\ast
}\right\Vert _{\beta_{0}+1,2;\ast}.
\]
This implies that $\bar{L}_{1}$ is a contraction mapping provided that
$\varepsilon$ is small enough. It follows that $\left(  \ref{nonlinear1}%
\right)  $ has a solution.

To see the Lipschitz dependence of $\Phi$ on $f,\bar{g},$ we subtract the
equations satisfied by $\Phi\left(  f_{1},\bar{g}_{1}\right)  $ and
$\Phi\left(  f_{2},\bar{g}_{2}\right)  $. Then one can use the explicit
expression for $E_{1},E_{2}$ to get the desired estimate.
\end{proof}

If we write $\Phi\left(  f,\bar{g}\right)  =\phi_{1}+L_{1}\left(  P\left(
f,\bar{g},\Phi\left(  f,\bar{g}\right)  \right)  \right)  ,$ then our original
nonlinear problem will be transformed into
\begin{equation}
\left\{
\begin{array}
[c]{l}%
\partial_{t}^{2}\phi_{1}+\partial_{l}^{2}\phi_{1}+\frac{3}{l}\partial_{l}%
\phi_{1}=J_{\Gamma_{0}}g+\left(  \Delta_{\Gamma_{0}}f+\left\vert A\right\vert
^{2}\right)  t+P(f,\bar{g},\phi_{1}),\text{ in }\Omega_{h},\\
\phi_{1}=0\text{ and }\partial_{t}\phi_{1}-f=E_{3,i}-\partial_{t}\left[
L_{1}\left(  P\left(  f,\bar{g},\Phi\left(  f,\bar{g}\right)  \right)
\right)  \right]  ,\text{on }\Gamma_{i+h_{i}}.
\end{array}
\right.  \label{Fi1}%
\end{equation}
With all these preparations, we are now ready to prove Theorem \ref{simons}.

\begin{proof}
[Proof of Theorem \ref{simons} ]Let us set $f=f_{0}+\widetilde{f}.$ Using
Proposition \ref{L2}, we find that to solve $\left(  \ref{Fi1}\right)  ,$ it
suffices to get a solution for the following fixed point problem for $\left(
\widetilde{f},g\right)  $ :
\[
\left(  \widetilde{f},\bar{g}\right)  =\bar{L}_{2}\left(  \widetilde{f}%
,\bar{g}\right)  :=L_{2}\left(  \Upsilon_{-1},\Upsilon_{1}\right)  -\left(
f_{0},0\right),
\]
where
\[
\Upsilon_{i}=E_{3,i}-\partial_{t}\left[  L_{1}\left(  P\left(  f,\bar{g}%
,\Phi\left(  f,\bar{g}\right)  \right)  \right)  \right]  |_{t=i},i=\pm1.
\]

Let us define the space
\[
\mathcal{B}:\mathcal{=}\left\{  \left(  \widetilde{f},\bar{g}\right)
|,\left(  \widetilde{f},\bar{g}\right)  \in\mathcal{B}_{\beta_{0}%
,2;\varepsilon}\times\mathcal{B}_{\beta_{0},2;1,\symbol{94}}\right\}  ,
\]
equipped with the norm
\[
\left\Vert \left(  \widetilde{f},\bar{g}\right)  \right\Vert :=\varepsilon
\left\Vert \widetilde{f}\right\Vert _{\beta_{0},2;\varepsilon}+\varepsilon
^{2}\left\Vert \bar{g}\right\Vert _{\beta_{0},2;1,\symbol{94}}.
\]
We claim that $\bar{L}_{2}$ is a contraction mapping in the set
\[
B_{1}:=\left\{  \left(  \widetilde{f},\bar{g}\right)  \in\mathcal{B}%
:\left\Vert \left(  \widetilde{f},\bar{g}\right)  \right\Vert \leq
C_{0}\varepsilon^{3}\right\}  ,
\]
where $C_{0}$ is a fixed large constant. Indeed, let
\[
\eta_{\pm}\left(  f,\bar{g}\right)  :=\partial_{t}\left[  L_{1}\left(
P\left(  f,\bar{g},\Phi\left(  f,\bar{g}\right)  \right)  \right)  \right]
|_{t=-1}\pm\partial_{t}\left[  L_{1}\left(  P\left(  f,\bar{g},\Phi\left(
f,\bar{g}\right)  \right)  \right)  \right]  |_{t=1},
\]
and
\[
f_{1}=f_{0}+\widetilde{f}_{1},f_{2}=f_{0}+\widetilde{f}_{2}.
\]
Using Proposition \ref{L2}, we can show%
\begin{align*}
&  \left\Vert \eta_{+}\left(  f_{1},\bar{g}_{1}\right)  -\eta_{+}\left(
f_{2},\bar{g}_{2}\right)  \right\Vert _{\beta_{0},2;\varepsilon}+\left\Vert
\eta_{-}\left(  f_{1},\bar{g}_{1}\right)  -\eta_{-}\left(  f_{2},\bar{g}%
_{2}\right)  \right\Vert _{\beta_{0},2;\varepsilon}\\
&  =O\left(  \varepsilon^{2}\right)  \left\Vert f_{1}-f_{2}\right\Vert
_{\beta_{0},2;\varepsilon}+O\left(  \varepsilon^{3}\right)  \left\Vert \bar
{g}_{1}-\bar{g}_{2}\right\Vert _{\beta_{0},2;1,\symbol{94}}.
\end{align*}
It then follows from Proposition \ref{L2}, Lemma \ref{P1} and Lemma \ref{f}
that
\[
\left\Vert \bar{L}_{2}\left(  \widetilde{f}_{1},\bar{g}_{1}\right)  -\bar
{L}_{2}\left(  \widetilde{f}_{2},\bar{g}_{2}\right)  \right\Vert \leq
C\varepsilon\left\Vert \left(  \widetilde{f}_{1},\bar{g}_{1}\right)  -\left(
\widetilde{f}_{2},\bar{g}_{2}\right)  \right\Vert .
\]
This proves the claim.

To prove the existence of a fixed point for $\bar{L}_{2},$ it remains to show
that $\bar{L}_{2}\left(  B_{1}\right)  \subset B_{1}.$ Since $\left(
\widetilde{f},\bar{g}\right)  \in B_{1},$ we have $\left\Vert \widetilde{f}%
\right\Vert _{\beta_{0},2;\varepsilon}\leq C_{0}\varepsilon^{2},\left\Vert
\bar{g}\right\Vert _{\beta_{0},2;1,\symbol{94}}\leq C_{0}\varepsilon.$ Observe
that due to the presence of the term $\left\vert A\right\vert ^{2}t$ and
$t^{3}\sum k_{i}^{3},$ the function $L_{1}\left(  P\left(  f,\bar{g}%
,\Phi\left(  f,\bar{g}\right)  \right)  \right)  |_{\pm1}$ does not have
enough decay and only belongs to $\mathcal{B}_{2,2;\varepsilon,\ast}.$
However, since these two terms are odd, their contribution to the boundary
derivative at $t=\pm1$ cancel and therefore
\[
\left\Vert \eta_{+}\right\Vert _{\beta_{0},2;\varepsilon}\leq C\varepsilon
^{2},\left\Vert \eta_{-}\right\Vert _{\beta_{0}+2,2;\varepsilon}\leq
C\varepsilon^{3}.
\]
Hence by Propositon \ref{L2},
\[
\bar{L}_{2}\left(  \widetilde{f},\bar{g}\right)  \leq C\varepsilon^{3},
\]
which implies that $\bar{L}_{2}\left(  B_{1}\right)  \subset B_{1}$, provided
that $C_{0}$ is chosen large enough.

The solution $w_{h}+\phi$ depends smoothly on $\varepsilon.$ Let us take the
derivatives of $w_{h}+\phi$ with respect to $\varepsilon.$ Note that the main
order of $w_{h}+\phi$ is $\frac{s-g}{1+f},$ where $s$ is the Fermi coordinate
around the minimal hypersurface $S_{\varepsilon}.$ Using the fact that
$S_{\varepsilon}$ is a minimal foliation associated to the Simons' cone, we
find that $\frac{d\left(  w_{h}+\phi\right)  }{d\varepsilon}$ is positive and
satisfy the system $\left(  \ref{stable}\right)  $ (see \cite{Jerison1}). This
proves that our solution of the free boundary problem is stable. This finishes
the proof of Theorem \ref{simons}.
\end{proof}

\section{\bigskip Existence of an energy minimizer in $\mathbb{R}^{8}$---Proof
of Theorem \ref{mini}}

In the previous section, we have shown that if $\varepsilon_{0}>0$ is small
enough, then for each $\varepsilon<\varepsilon_{0},$ we have a solution for
the free boundary problem whose nodal set is asymptotic to $S_{\varepsilon
}^{+}.$ By symmetry, one also has solutions whose nodal sets are asymptotic to
$S_{\varepsilon}^{-}.$ We denote these two continuous families of solutions by
$u_{\varepsilon}^{+}$ and $u_{\varepsilon}^{-},$ with $u_{\varepsilon}%
^{-}<u_{\varepsilon}^{+}.$ In this section, we will use variational arguments
to show the existence of an energy minimizer $U$ in $\mathbb{R}^{8},$ lying
between $u_{\varepsilon_{0}}^{+}$ and $u_{\varepsilon_{0}}^{-}.$ The arguments
in this section are very similar to that of \cite{Liu}, where the global
minimizers of the Allen-Cahn equation in dimension $n\geq8$ are constructed.

We use $B_{a}$ to denote the open ball of radius $a$ in $\mathbb{R}^{8}.$
Choose a Lipschitz function $b_{a}$ which is invariant under the natural
$O\left(  4\right)  \times O\left(  4\right)  $ action on $\mathbb{R}^{8}$ and%
\[
u_{\varepsilon_{0}}^{-}<b_{a}<u_{\varepsilon_{0}}^{+}\text{ on }\partial
B_{a}.
\]
Let us consider the minimizing problem
\begin{equation}
\min_{\eta-b_{a}\in H_{0}^{1}\left(  B_{a}\right)  }J\left(  \eta\right)  .
\label{min}%
\end{equation}

\begin{lemma}
\label{minimizer}The minimizing problem $\left(  \ref{min}\right)  $ has a
solution $u_{a}$ which is invariant under $O\left(  4\right)  \times O\left(
4\right)  .$
\end{lemma}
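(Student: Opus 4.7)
The plan is a direct-method argument restricted to the $O(4)\times O(4)$-invariant subclass
\[
\mathcal{A}_{\mathrm{sym}} := \{\eta \in b_a + H_0^1(B_a) : \eta \circ g = \eta \text{ a.e. for every } g \in O(4)\times O(4)\}.
\]
This class is nonempty since $b_a \in \mathcal{A}_{\mathrm{sym}}$, and $J(b_a) < \infty$ because $b_a$ is Lipschitz on the bounded set $B_a$. Hence $\inf_{\mathcal{A}_{\mathrm{sym}}} J$ is finite and I can pick a minimizing sequence $\{\eta_k\} \subset \mathcal{A}_{\mathrm{sym}}$. The inequality $\int_{B_a} |\nabla \eta_k|^2 \leq J(\eta_k) \leq J(b_a) + 1$, combined with $\eta_k - b_a \in H_0^1(B_a)$ and Poincar\'e's inequality, gives a uniform bound on $\|\eta_k\|_{H^1(B_a)}$. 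Extracting a subsequence, I obtain $\eta_k \rightharpoonup u_a$ weakly in $H^1(B_a)$, strongly in $L^2(B_a)$, and pointwise a.e. The limit $u_a$ still lies in $\mathcal{A}_{\mathrm{sym}}$: the Dirichlet boundary condition $u_a - b_a \in H_0^1(B_a)$ is preserved because $H_0^1$ is weakly closed, and pointwise $O(4)\times O(4)$-invariance is preserved under a.e.\ limits.

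The remaining task is lower semicontinuity of $J$ along the sequence. The Dirichlet part is the classical weak-$H^1$ l.s.c.\ of $\int|\nabla \cdot|^2$. The non-convex potential $\int \chi_{(-1,1)}(\eta)$ I would treat pointwise: at any $x$ with $\eta_k(x)\to u_a(x)$, if $u_a(x)\in(-1,1)$ then $\eta_k(x)\in(-1,1)$ for all large $k$, while if $|u_a(x)|\ge 1$ the left-hand side vanishes; in both cases
\[
\chi_{(-1,1)}(u_a(x)) \leq \liminf_{k\to\infty}\chi_{(-1,1)}(\eta_k(x)).
\]
Fatou's lemma then yields $\int\chi_{(-1,1)}(u_a) \leq \liminf_k \int\chi_{(-1,1)}(\eta_k)$. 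Adding the two pieces gives $J(u_a) \leq \liminf_k J(\eta_k) = \inf_{\mathcal{A}_{\mathrm{sym}}} J$, and so $u_a$ is the required invariant minimizer.

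The main obstacle here is precisely the lower semicontinuity of the characteristic-function term, which is not convex and in general fails to be l.s.c.\ under mere weak $L^2$ convergence; this is why I insist on extracting an a.e.\ convergent subsequence (available because $B_a$ is bounded, so the Rellich embedding gives $L^2$ strong convergence). A secondary issue, important for the later application of this lemma in constructing a global minimizer on $\mathbb{R}^8$, is the comparison $u_{\varepsilon_0}^- \leq u_a \leq u_{\varepsilon_0}^+$. Naive truncation against the barriers is \textbf{not} safe for the $\chi$ term—truncating a value $\eta_k(x)>1$ down to $u_{\varepsilon_0}^+(x)\in(-1,1)$ would \emph{increase} the measure $|\{-1<u<1\}|$—so this ordering should instead be obtained by a separate comparison argument exploiting that $u_{\varepsilon_0}^{\pm}$ are themselves solutions of the free boundary problem, and is best treated in a subsequent lemma rather than incorporated into the existence proof.
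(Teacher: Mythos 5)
Your direct-method argument is internally correct (the a.e.\ convergence plus Fatou treatment of the $\chi_{(-1,1)}$ term is fine, and invariance and the boundary condition do pass to the weak limit), but it proves a weaker statement than the lemma. Problem $(\ref{min})$ is minimization over \emph{all} $\eta$ with $\eta-b_a\in H_0^1(B_a)$, with no symmetry constraint; the lemma asserts that this unconstrained problem admits a minimizer which happens to be $O(4)\times O(4)$-invariant. By restricting from the start to the invariant class $\mathcal{A}_{\mathrm{sym}}$ you only obtain a minimizer among symmetric competitors, and you never address whether $\inf_{\mathcal{A}_{\mathrm{sym}}}J=\inf J$. This is not a cosmetic point: the later arguments use full minimality of $u_a$ — the blow-up analysis needs the blow-up limits to be \emph{minimizing} cones (perturbations near a free boundary point off the axes are not symmetric), and the proof of Theorem \ref{mini} concludes that the limit $U$ is an energy minimizer ``since each $u_a$ is minimizing.'' With only symmetric minimality, these steps would have to be reworked (e.g.\ by averaging local perturbations over the group orbit), which you have not done.

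The paper's route is different precisely to avoid this issue: it first takes an arbitrary minimizer $u$ of the unconstrained problem (standard direct method), notes that $u$ is continuous, and then symmetrizes it by setting $w_1(x)=\min\{u(gx):g\in O(4)\times O(4)\}$ (and $w_2$ with $\max$). The key fact is that the minimum of two minimizers with the same (invariant) boundary data is again a minimizer, since pointwise $\left\{\min(u,v),\max(u,v)\right\}=\left\{u,v\right\}$ gives $J(\min(u,v))+J(\max(u,v))=J(u)+J(v)$; iterating over finitely many group elements and then covering the compact group by $\varepsilon$-balls and passing to the limit (using continuity of $u$ and compactness of minimizers) shows $w_1$ is an invariant minimizer of the \emph{full} problem. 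To repair your proposal you would need to append exactly such a symmetrization step showing the two infima coincide; as written, the gap between ``minimizer in the symmetric class'' and ``solution of $(\ref{min})$ that is invariant'' remains. Your closing remark that the ordering $u_{\varepsilon_0}^-\le u_a\le u_{\varepsilon_0}^+$ should be handled by a separate comparison/sweeping argument is consistent with what the paper does.
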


\begin{proof}
The existence of a minimizer $u$ for $\left(  \ref{min}\right)  $ follows from
standard arguments. The point is that we need to prove the existence of a
minimizer which is additionally invariant under $O\left(  4\right)  \times
O\left(  4\right)  .$

Since $u$ solves the free boundary problem, it is continuous. We define%
\begin{align*}
w_{1}\left(  x\right)   &  =\min\left\{  u\left(  gx\right)  :g\in O\left(
4\right)  \times O\left(  4\right)  \right\}  ,\\
w_{2}\left(  x\right)   &  =\max\left\{  u\left(  gx\right)  :g\in O\left(
4\right)  \times O\left(  4\right)  \right\}  .
\end{align*}
Then $w_{1}$ and $w_{2}$ are invariant under $O\left(  4\right)  \times
O\left(  4\right)  .$ We claim that $w_{1}$ and $w_{2}$ are also minimizers.
Indeed, for each $k\in\mathbb{N}$ and a finite set $\{g_{1},\cdots,g_{k}\}\in
O\left(  4\right)  \times O\left(  4\right)  $, let
\[
\bar{w}_{k}=\min\left\{  u\left(  g_{i}x\right)  :g_{i}\in O\left(  4\right)
\times O\left(  4\right)  ,i=1,...,k\right\}.
\]
Then $\bar{w}_{k}$ is a minimizer. We cover $O\left(  4\right)  \times O\left(
4\right)  $ by finitely many balls with radius $\varepsilon.$ Denote by
$n_{\varepsilon}$ the number of balls. In each ball, let us choose a $g_{i}\in
O\left(  4\right)  \times O\left(  4\right)  $. We will define
\[
q_{\varepsilon}\left(  x\right)  :=\min\left\{  u\left(  g_{i}x\right)
:i=1,...,n_{\varepsilon}\right\}  .
\]
Then $q_{\varepsilon}$ is also a minimizer. We observe that by the continuity
of a minimizer,
\[
w_{1}\left(  x\right)  =\lim_{\varepsilon\rightarrow0}q_{\varepsilon}\left(
x\right)  .
\]
On the other hand, let $\left\{  \varepsilon_{k}\right\}  $ be a sequence
converge to $0.$ Then standard arguments yield that $q_{\varepsilon_{k}%
}\left(  x\right)  $ converges a.e. to minimizer $q.$ This $q$ must be
$w_{1}.$ This proves that $w_{1}$ is also a minimizer. Similarly, $w_{2}$ is
also a minimizer.
\end{proof}

\subsection{Regularity of the free boundary}

We would like to analyze the regularity property of the free boundary of the
solution $u_{a}.$

\begin{lemma}
The free boundary of $u_{a}$ is smooth in $B_{a}\backslash\left\{  0\right\}
.$
\end{lemma}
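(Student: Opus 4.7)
The plan is a Federer dimension-reduction argument combining Alt--Caffarelli regularity with the $O(4)\times O(4)$ equivariance of $u_{a}$. First, I would apply the classical Alt--Caffarelli theory to each of the two components of the free boundary of $u_{a}$: since $u_{a}$ minimizes $J$ and is continuous, near the level set $\{u_{a}=-1\}$ the function $u_{a}+1$ is a local minimizer of the standard one-phase functional, and similarly near $\{u_{a}=1\}$ for $1-u_{a}$. Hence each of $\partial\{u_{a}>-1\}$ and $\partial\{u_{a}<1\}$ decomposes into a relatively open real-analytic regular part and a closed singular set $\Sigma^{\pm}$.

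Second, I would invoke Weiss's monotonicity formula to show that blow-ups at any singular point converge to a homogeneous degree-one minimizing cone. The Jerison--Savin regularity theorem rules out nontrivial such cones in $\mathbb{R}^{k}$ for $k\leq 4$. Federer's dimension reduction then yields
\[
\dim_{\mathcal{H}}(\Sigma^{\pm})\leq n-5=3
\]
in our ambient dimension $n=8$.

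Third, I would exploit the $O(4)\times O(4)$ invariance of $u_{a}$, which forces $\Sigma^{\pm}$ to be invariant as well. For any $p=(x,y)\in\Sigma^{\pm}$ with both $|x|>0$ and $|y|>0$, the orbit of $p$ under the group is diffeomorphic to $S^{3}\times S^{3}$, which is six-dimensional, and is contained in $\Sigma^{\pm}$; this contradicts the bound $\dim_{\mathcal{H}}(\Sigma^{\pm})\leq 3$. Consequently any singular point in $B_{a}\setminus\{0\}$ must lie on the axis $\{|x|=0\}\cup\{|y|=0\}$.

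The main obstacle is handling these remaining axis candidates, where the orbit is only three-dimensional and the Hausdorff bound is tight. At such a point the stabilizer grows to $O(4)\times O(3)$ (or $O(3)\times O(4)$), so blow-ups are $\mathbb{R}^{3}$-translation-invariant minimizing cones on a five-dimensional slice, with residual rotational symmetry in the off-axis variable. Jerison--Savin's classification, applied to this reduced symmetric setting, forces such a cone to be a half-space, and Alt--Caffarelli's flatness-implies-regularity theorem then upgrades the blow-up to genuine smoothness of the free boundary near $p$. Combining the four steps, $\Sigma^{\pm}\cap(B_{a}\setminus\{0\})=\emptyset$, which is what is to be proved.
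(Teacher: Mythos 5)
Your proposal is correct, and at the decisive point it coincides with the paper's proof: at an axis point you blow up, use translation invariance along the (three-dimensional) orbit together with the residual $O(4)$ symmetry to reduce the blow-up to an axially symmetric minimizing cone in $\mathbb{R}^{5}$, rule out a nontrivial cone by the Jerison--Savin classification of stable axially symmetric cones, and conclude by flatness-implies-regularity. Where you differ is in the off-axis case: the paper blows up there as well and uses the full six-dimensional orbit to reduce the blow-up cone to a minimizing cone in $\mathbb{R}^{2}$, which is trivial by classical Alt--Caffarelli theory; you instead run Weiss/Federer dimension reduction (using the Jerison--Savin regularity up to $\mathbb{R}^{4}$) to get $\dim_{\mathcal{H}}\Sigma^{\pm}\le 3$ and contradict the six-dimensional orbit of an off-axis singular point. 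Both routes work: the paper's off-axis argument is lighter, needing only two-dimensional triviality of cones and no dimension-reduction machinery, while yours buys an explicit bound on the singular set before symmetry is used. Two minor points to make explicit: the local reduction to the one-phase problem near $\{u_a=\pm1\}$ uses $-1\le u_a\le 1$ (harmless, since truncation at $\pm1$ does not increase $J$), and the translation invariance of blow-ups along orbit directions at an axis point, which both you and the paper use, should be stated as the standard consequence of equivariance under the rescaled group action.
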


\begin{proof}
We shall use the standard arguments in the regularity theory: Blow up analysis
around a free boundary point, cf. \cite{Weiss2, Weiss3}. Let $x_{0}\in B_{a}$
be a point on the free boundary of $u$. Suppose $x_{0}\neq0$ and $u_{a}\left(
x_{0}\right)  =1.$ We distinguish three cases.

Case 1. $x_{0}$ is not on the $x$ axis and not on $y$ axis.

In this case, standard arguments, based on Weiss monotonicity formula
(\cite{Weiss2, Weiss3}), tell us that the sequence $w_{k}:=\frac{u_{a}\left(
x_{0}+\rho_{k}\cdot\right)  -1}{\rho_{k}},$ with $\rho_{k}\rightarrow0,$ has a
subsequence converges in suitable sense to a minimizing cone $\mathfrak{C}$ in
$\mathbb{R}^{8}.$ We observe that $u_{a}$ is invariant under $O\left(
4\right)  \times O\left(  4\right)  .$ Hence $\mathfrak{C}$ reduces to a
minimizing cone in $\mathbb{R}^{2}.$ Therefore it must be a trivial cone. This
implies that around $x_{0}$, the free boundary is flat and the regularity
theory implies that actually it is smooth (analytic).

Case 2. $x_{0}$ is on the $x$ or $y$ axis.

In this case, the cone $\mathfrak{C}$ reduces to a minimizing cone in
$\mathbb{R}^{5}$ which is invariant under the $O\left(  4\right)  $ action of
the last four coordinates. If this cone were not trivial, it would be
unstable, due to the classification of stable cones by Jerison and Savin in
the axial symmetric case (see \cite{Jerison1}). This contradicts with the fact
that $u_{a}$ is a minimizer.
\end{proof}

With this regularity at hand, we now want to prove that these minimizers are
bounded by $u_{\varepsilon_{0}}^{+}$ and $u_{\varepsilon_{0}}^{-},$ by
sweeping the family of ordered solutions $u_{\varepsilon}^{+}$ and
$u_{\varepsilon}^{-}$, similarly as in $\cite{Liu}.$ By our previous
construction, for $\varepsilon$ sufficiently small, we have
\begin{equation}
\left\{
\begin{array}
[c]{l}%
u_{a}\leq u_{\varepsilon}^{+},\text{ in }B_{a},\\
u_{a}<u_{\varepsilon}^{+},\text{ in }\Lambda:=\left\{  X:\left\vert
u_{a}\left(  X\right)  \right\vert <1\right\}  .
\end{array}
\right.  \label{order}%
\end{equation}
We show that actually $\left(  \ref{order}\right)  $ holds for all
$\varepsilon\leq\varepsilon_{0}.$ To see this, we continuously increase the
value of $\varepsilon$. Assume to the contrary that there existed a
$\delta<\varepsilon_{0},$ which were the first value where we have
\begin{equation}
u_{a}\leq u_{\delta}^{+}\text{ in }B_{a},\text{ and }u_{a}\left(  X\right)
=u_{\varepsilon}^{+}\left(  X\right)  \text{ for some }X\in\bar{\Lambda}.
\label{equal}%
\end{equation}
Maximum principle tells us that this $X$ must be on $\partial B_{a}.$ By the
results in \cite{KKS}, the free boundary approaches the fixed boundary
tangentially, this contradicts with the choice of $\delta,$ which is the
smallest value satisfying $\left(  \ref{equal}\right)  .$ This finishes the proof.

\begin{proof}
[Proof of Theorem 2]For each $a$ large, we have a solution $u_{a}$ with
$u_{\varepsilon_{0}}^{-}<u_{a}<u_{\varepsilon_{0}}^{+}.$ Sending $a$ to
infinity, we can find a subsequence of $u_{a}$ which converges to a nontrivial
solution $U$ of $\left(  \ref{FB}\right)  .$ This solution $U$ must be an
energy minimizer of $J,$ since each $u_{a}$ is minimizing.
\end{proof}

\section{From minimizers in $\mathbb{R}^{8}$ to monotone solutions in
$\mathbb{R}^{9}$--Proof of Theorem \ref{mono}}

We have obtained a minimizer of the energy functional in dimension 8. Now we
would like to construct monotone solutions in $\mathbb{R}^{9}$ from $U$,
following the arguments of Jerison-Monneau (\cite{JM}). We use $\left(
x^{\prime},x_{9}\right)  $ to denote the coordinate of a point in
$\mathbb{R}^{9},$ where $x^{\prime}\in$ $\mathbb{R}^{8}.$ We will still use
minimizing argument and work directly in the class of functions which is
invariant w.r.p.t $O\left(  4\right)  \times O\left(  4\right)  $ action on
the first eight variables.

We denote by $v_{1}$ the global minimizer in $\mathbb{R}^{8}$ we constructed
in the last section. We also consider the solution $v_{2}$ which in the
$\left(  x,y\right)  $ coordinate is given by
\[
v_{2}\left(  x,y\right)  =-v_{1}\left(  y,x\right)  .
\]
Since $v_{1}$ is constructed using minimizing argument, we can assume without
loss of generality that $v_{1}\leq v_{2}.$

\begin{proposition}
\label{monotone}Either there exists a nontrivial solution $u:\mathbb{R}%
^{9}\rightarrow\mathbb{R}$ monotone in the $x_{9}$ direction, or for each
$\delta\in\left[  v_{1}\left(  0\right)  ,v_{2}\left(  0\right)  \right]  ,$
there exists a nontrivial global minimizer $v$ in $\mathbb{R}^{8}$ with
$v\left(  0\right)  =\delta.$
\end{proposition}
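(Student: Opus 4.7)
The plan is to adapt the variational construction of Jerison--Monneau \cite{JM}, as executed for the Allen--Cahn equation in \cite{Liu}, to the free boundary setting of (\ref{FB}). The idea is to build a candidate solution in $\mathbb{R}^9$ that interpolates in $x_9$ between the two ordered minimizers $v_1$ and $v_2$, and then show that the candidate is either strictly monotone in $x_9$ (first alternative) or already $x_9$--independent (second alternative).

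First I would, for each large $R>0$, minimize $J$ over the class of $O(4)\times O(4)$--invariant functions on the cylinder $C_R := B_R^{8} \times (-R,R) \subset \mathbb{R}^9$, subject to boundary data $u(x',-R)=v_1(x')$, $u(x',R)=v_2(x')$, with a suitable interpolation on the lateral boundary $\partial B_R^{8} \times (-R,R)$. The ordering constraint $v_1\le u\le v_2$ may be imposed at no cost since $v_1,v_2$ are themselves global minimizers, so the truncation $u\mapsto \min(v_2,\max(v_1,u))$ cannot raise the energy. This produces a minimizer $u_R$; its free boundary is regular in $C_R\setminus\{x'=0\}$ exactly as in Section 3 (the $O(4)\times O(4)$ invariance together with \cite{Jerison1, Weiss2, Weiss3} rules out nontrivial singular blow--ups).

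Next, for each $\delta\in[v_1(0),v_2(0)]$, I would renormalize by a vertical translation: continuity of $t\mapsto u_R(0,t)$ and the boundary data at $x_9=\pm R$ produce, via the intermediate value theorem, a shift $t_R(\delta)$ such that $u_R^{\delta}(x',x_9):=u_R(x',x_9+t_R(\delta))$ satisfies $u_R^{\delta}(0,0)=\delta$. The shifts stay bounded as $R\to\infty$ because $\delta$ lies strictly between the asymptotic values $v_1(0)$ and $v_2(0)$, preventing the sequence from degenerating into a pure translate of $v_1$ or $v_2$. Standard compactness for free boundary minimizers then extracts a limit $u^{\delta}$ which is an $O(4)\times O(4)$--invariant global minimizer of $J$ on $\mathbb{R}^9$, with $v_1\le u^{\delta}\le v_2$ and $u^{\delta}(0,0)=\delta$.

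A sliding argument applied to $u_R$ (and inherited in the limit) then yields monotonicity in $x_9$: for any $\tau>0$, comparing $u^{\delta}(x',x_9+\tau)$ with $u^{\delta}(x',x_9)$ through min/max rearrangement shows that both rearrangements are still admissible minimizers with the same energy, forcing $u^{\delta}(x',x_9+\tau)\ge u^{\delta}(x',x_9)$ everywhere. Now a dichotomy is imposed by the strong maximum principle applied on $\{-1<u^{\delta}<1\}$, supplemented by a Hopf--type lemma at the free boundary (smooth off the axis by the previous step): either $u^{\delta}$ genuinely depends on $x_9$, in which case it is strictly monotone, yielding the first alternative of the proposition; or $u^{\delta}$ is independent of $x_9$, in which case the $\mathbb{R}^8$ function $u^{\delta}(\cdot,0)$ is a global minimizer with the prescribed value $\delta$ at the origin, yielding the second alternative for that $\delta$. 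If strict monotonicity occurs for even a single $\delta$ we are done; otherwise the $x_9$--independent case holds for every $\delta\in[v_1(0),v_2(0)]$, producing the desired continuous family of $\mathbb{R}^8$ minimizers.

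The main obstacle is justifying the strong maximum principle and Hopf lemma near the free boundary of $u^{\delta}$ so that weak monotonicity can be upgraded to strict monotonicity (or, respectively, to $x_9$--independence). This requires the regularity theory of Weiss \cite{Weiss2,Weiss3} together with the classification of axially symmetric stable cones of Jerison--Savin \cite{Jerison1}, invoked already in Section 3; the $O(4)\times O(4)$ symmetry imposed throughout ensures these tools apply uniformly in $R$ and pass to the limit.
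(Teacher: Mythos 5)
Your overall scheme is the Jerison--Monneau scheme the paper itself follows: minimize $J$ on cylinders in the $O(4)\times O(4)$--invariant class with boundary data interpolating between $v_{1}$ (bottom) and $v_{2}$ (top), obtain minimizers monotone in $x_{9}$, normalize by a vertical translation so the value at the origin is the prescribed $\delta$, pass to the limit, and read off the dichotomy ``depends on $x_{9}$'' versus ``$x_{9}$--independent, hence a new global minimizer in $\mathbb{R}^{8}$ with $v(0)=\delta$''. In that sense the route is the same as the paper's. (The extra effort you spend upgrading weak to strict monotonicity via a Hopf-type lemma at the free boundary is not needed for the statement; the paper only uses the alternative ``independent of $x_{9}$ or not''.)

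There is, however, one genuine gap: your treatment of the normalizing shifts. You work on a single finite cylinder $B_{R}\times(-R,R)$ and take one simultaneous limit $R\to\infty$, translating by $t_{R}(\delta)$ so that $u_{R}^{\delta}(0,0)=\delta$, and you assert that the shifts stay bounded ``because $\delta$ lies strictly between $v_{1}(0)$ and $v_{2}(0)$''. That assertion is not justified, and it is precisely in the regime of the second alternative (transition layer spreading out, limit becoming $x_{9}$--independent) that one expects the level $\{u_{R}(0,\cdot)=\delta\}$ to drift without control; nothing in the minimization pins it near $x_{9}=0$. If $R-|t_{R}(\delta)|$ stays bounded, your translated domains converge to a half-infinite cylinder/half-space rather than to $\mathbb{R}^{9}$, and the compactness step does not deliver an entire solution. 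The paper avoids this entirely by decoupling the limits: first fix the radius $R'$ and let the height $l\to+\infty$, using the global minimality of $v_{1},v_{2}$ to show that the monotone solution $u_{R'}$ on the full infinite cylinder $B_{R'}\times\mathbb{R}$ satisfies $u_{R'}\to v_{1}$ as $x_{9}\to-\infty$ and $u_{R'}\to v_{2}$ as $x_{9}\to+\infty$; the intermediate value theorem then produces the height $h_{R'}$ where the value $\delta$ is attained, and the translation is harmless because the domain $B_{R'}\times\mathbb{R}$ is invariant under $x_{9}$--translations. Only afterwards does one let $R'\to\infty$, so no bound on $h_{R'}$ is ever needed. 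You should either adopt this two-step limit or supply an argument that $R-|t_{R}(\delta)|\to\infty$; as written, the compactness step is not secured.
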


\begin{proof}
Let $\rho$ be a smooth decreasing cutoff function which satisfies
\[
\rho\left(  s\right)  =\left\{
\begin{array}
[c]{c}%
1,s<1,\\
0,s>2.
\end{array}
\right.
\]
Define the function $w\left(  x^{\prime},x_{9}\right)  =\rho\left(
x_{9}\right)  v_{1}\left(  x^{\prime}\right)  +\left(  1-\rho\left(
x_{9}\right)  \right)  v_{2}\left(  x^{\prime}\right)  .$ For each cylinder
$C_{R^{\prime},l}=B_{R^{\prime}}\times\left[  -l,l\right]  ,$ consider the
minimization problem which equals $w$ on $\partial B_{R^{\prime}}\times\left[
-l,l\right]  $ and equals $v_{1}$ on $B_{R^{\prime}}\times\left\{  -l\right\}
,$ equals $v_{2}$ on $B_{R^{\prime}}\times\left\{  l\right\}  ,$ in the class
of functions which are invariant under $O\left(  4\right)  \times O\left(
4\right)  $ with respect to the first eight variables. We can find a minimizer
$u_{R^{\prime},l}$ that is monotone in the $x_{9}$ direction with this
boundary condition. By the gradient bound of De Silva-Jerison (\cite{Jerison3}%
), the free boundary is smooth in the interior of the cylinder.

Let $l\rightarrow+\infty,$ we get a solution $u_{R^{\prime}}$ on the whole
cylinder $B_{R^{\prime}}\times\mathbb{R},$ still monotone in $x_{9}$ and
invariant under $O\left(  4\right)  \times O\left(  4\right)  .$ We observe
that
\begin{equation}
\lim_{x_{9}\rightarrow+\infty}u_{R^{\prime}}=v_{2},\lim_{x_{9}\rightarrow
-\infty}u_{R^{\prime}}=v_{1}, \label{limit}%
\end{equation}
otherwise it will contradict with the fact that $v_{1}$ and $v_{2}$ are global
minimizer. Now fix an $a\in\left(  v_{1}\left(  0\right)  ,v_{2}\left(
0\right)  \right)  .$ By $\left(  \ref{limit}\right)  ,$ there exists
$h_{R^{\prime}}$ such that
\[
u_{R^{\prime}}\left(  x^{\prime},h_{R^{\prime}}\right)  =a.
\]
Let $\bar{u}_{R^{\prime}}\left(  x^{\prime},x_{9}\right)  =u_{R^{\prime}%
}\left(  x^{\prime},x_{9}-h_{R^{\prime}}\right)  .$ Then $\bar{u}_{R^{\prime}%
}\left(  x^{\prime},0\right)  =a.$ Let $R^{\prime}\rightarrow+\infty,$ we get
a solution $u$ monotone in $x_{9},$ invariant under $O\left(  4\right)  \times
O\left(  4\right)  ,$ and
\[
u\left(  0\right)  =a,v_{1}\leq u\leq v_{2}.
\]
If $u$ is independent on $x_{9},$ then $u$ is a global minimizer in
$\mathbb{R}^{8}.$\ This proves the proposition.
\end{proof}

\ \ Finally we are ready to prove Theorem \ref{mono}.

\begin{theorem}
There exists a solution $u$ to our free boundary problem such that $u$ is
invariant w.r.p.t $O\left(  4\right)  \times O\left(  4\right)  ,$ monotone in
$x_{9}$ and $u$ is not one dimensional.
\end{theorem}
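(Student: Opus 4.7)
The plan is to deduce the theorem directly from Proposition \ref{monotone}. First I would fix any value $a \in (v_1(0), v_2(0))$ and instantiate the construction behind that proposition: $O(4) \times O(4)$-invariant energy minimizers on expanding cylinders, pass to the limit, and renormalize so that the limiting solution $u$ obeys $u(0) = a$, $v_1 \leq u \leq v_2$, and is monotone in $x_9$. The dichotomy of Proposition \ref{monotone} then splits as follows: either $u$ genuinely depends on $x_9$, in which case we have our candidate monotone solution in $\mathbb{R}^9$; or $u$ is independent of $x_9$, in which case $u$ descends to a new $O(4) \times O(4)$-invariant global minimizer in $\mathbb{R}^8$ with $u(0) = a$.

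In the first case I would check at once that $u$ cannot be one-dimensional. If it were, $O(4) \times O(4)$-invariance on the first eight coordinates would force $u(x', x_9) = g(x_9)$ for a single-variable function $g$, so the sandwich $v_1(x') \leq g(0) \leq v_2(x')$ would hold uniformly in $x' \in \mathbb{R}^8$. Since $v_1$ is a nontrivial solution of (\ref{FB}) its values approach $1$, forcing $g(0) \geq 1$, which contradicts $a < v_2(0) < 1$. So whenever the first alternative of Proposition \ref{monotone} is realized, it already produces the required solution.

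The remaining, and main, task will be to rule out the possibility that the second alternative of Proposition \ref{monotone} holds for every admissible $a$. Were that so, one would obtain a continuum $\{v_a\}_{a \in (v_1(0), v_2(0))}$ of $O(4) \times O(4)$-invariant global minimizers in $\mathbb{R}^8$, continuous in $a$, with $v_a(0) = a$ and $v_1 \leq v_a \leq v_2$. A sliding and strong maximum principle argument for minimizers of $J$ would force this family to be totally ordered, producing a continuous foliation of the region between the free boundaries of $v_1$ and $v_2$ by free boundaries of $O(4) \times O(4)$-invariant minimizers. The hard part will be extracting a contradiction from this foliation. The intended route is to invoke the Hardt-Simon strict area minimality of the Simons cone together with the axially symmetric free boundary regularity of Jerison-Savin: on each side of $S$ the canonical family $\{S_\varepsilon^{\pm}\}$ is the unique ordered one-parameter foliation by minimizing hypersurfaces asymptotic to $S$, so the free boundaries of $\{v_a\}$ must agree with $\{S_\varepsilon^{\pm}\}$ up to reparametrization. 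This forces $a \mapsto v_a(0)$ to take values in the discrete set already realized by the solutions $u_\varepsilon^{\pm}$ from Section 2, contradicting the assumption that it covers the entire open interval. Choosing any $a$ outside this exceptional set then returns us to the first alternative and completes the proof.
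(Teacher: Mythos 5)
Your treatment of the first alternative of Proposition \ref{monotone} is essentially fine (an $O(4)\times O(4)$-invariant solution that is one dimensional must be a function of $x_9$ alone, and the sandwich $v_1\leq u\leq v_2$ with $u(0)=a<1$ excludes this). The proof breaks down, however, in your plan for the second alternative. You propose to \emph{rule it out}, but there is no basis for doing so, and indeed it is not expected to be false: the paper itself remarks that, with more work, one can produce an energy minimizer of $J$ whose nodal set is asymptotic to each leaf $S_\delta^{+}$, so a continuum of $O(4)\times O(4)$-invariant global minimizers in $\mathbb{R}^8$ taking a continuum of values at the origin is entirely consistent with everything proved. Your contradiction argument rests on two unsupported claims: first, that the free boundaries of the minimizers $v_a$ must coincide with the Hardt--Simon foliation $S_\varepsilon^{\pm}$ --- but free boundaries of minimizers of $J$ are not minimal hypersurfaces (they are two-component surfaces satisfying the overdetermined condition $\left\vert \nabla u\right\vert =1$), so strict area minimality of the Simons cone and the Jerison--Savin classification of stable cones say nothing about uniqueness of such a family; second, that the values at the origin realized by the solutions of Section 2 form a discrete set, which is not established anywhere (those solutions form a continuous family in $\varepsilon$, and no classification of invariant global minimizers of $J$ is available).

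The missing idea is that the second alternative is not an obstruction but the input to a further construction: following Jerison--Monneau, one takes the invariant global minimizer $v$ in $\mathbb{R}^8$ with $-1<v(0)<1$ furnished by that alternative, interpolates between the one-dimensional solution $\Theta$ and $v$ through the boundary data $\Theta_t(x^{\prime},x_9)=\Theta\left(tv(x^{\prime})+(1-t)x_9\right)$, minimizes $J$ in the invariant class on cylinders $C_{R^{\prime},l}$ with these data, passes to the limit $l\rightarrow+\infty$ to obtain monotone solutions $U_{t,R^{\prime}}$ normalized by $U_{t,R^{\prime}}(0)=v(0)$, and then exploits the continuity of $t\mapsto\partial_{x_9}U_{t,R^{\prime}}(0)$ (proved by a sliding/comparison argument, including at $t=1$) to produce, after letting $R^{\prime}\rightarrow+\infty$, a solution in $\mathbb{R}^9$ that is monotone in $x_9$, invariant, and not one dimensional. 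Without this step your argument only covers the case in which the first alternative happens to hold, so the theorem is not proved.
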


\begin{proof}
Suppose the second possibility of Proposition \ref{monotone} occurs. Then we
can assume there is a global minimizer $v$ in $\mathbb{R}^{8},$ invariant
under $O\left(  4\right)  \times O\left(  4\right)  $ and $-1<v\left(
0\right)  <1.$

By $\Theta$ we shall denote the standard one dimensional solution to our free
boundary problem:
\[
\Theta\left(  x\right)  =\left\{
\begin{array}
[c]{c}%
x,x\in\left[  -1,1\right]  ,\\
1,x>1,\\
-1,x<-1.
\end{array}
\right.
\]
Note that $\Theta$ is monotone, but not strictly monotone. We would like to
pose suitable boundary condition on the cylinder $C_{R^{\prime},l}.$ For each
$t\in\left[  0,1\right]  ,$ let
\[
\Theta_{t}\left(  x^{\prime},x_{9}\right)  =\Theta\left(  tv\left(  x^{\prime
}\right)  +\left(  1-t\right)  x_{9}\right)  .
\]
Then $\Theta_{1}\left(  x^{\prime},x_{9}\right)  =\Theta\left(  v\left(
x^{\prime}\right)  \right)  =v\left(  x^{\prime}\right)  .$ $\Theta_{t}$ is a
connection between $\Theta$ and $v.$ Certainly, $\Theta_{t}\left(  x^{\prime
},x_{9}\right)  \in\left[  -1,1\right]  .$ We check that $\Theta_{t}$ is
continuous and monotone in the $x_{9}$ direction, since $\Theta$ itself is
monotone. Consider those points where
\begin{equation}
tv\left(  x^{\prime}\right)  +\left(  1-t\right)  x_{9}=1. \label{boundary}%
\end{equation}
For each fixed $x^{\prime},$ there is a unique point $x_{9}$ satisfying
$\left(  \ref{boundary}\right)  .$

Let $U_{t,R^{\prime},l}$ be the minimizer of $J$ in the symmetric (invariant
under $O\left(  4\right)  \times O\left(  4\right)  $ action) class of
functions defined on $C_{R^{\prime},l}$ with boundary condition
\[
U_{t}|_{\partial C_{R^{\prime},l}}=\Theta_{t}|_{\partial C_{R^{\prime},l}}.
\]
After a possible translation in the $x_{9}$ direction, we can assume that
\[
U_{t,R^{\prime},l}\left(  0\right)  =v\left(  0\right)  .
\]
For each $R^{\prime},$ letting $l\rightarrow+\infty,$ $U_{t,R^{\prime},l}$
converges pointwisely to a solution $U_{t,R^{\prime}}$, defined on the
infinite cylinder $C_{R^{\prime},+\infty}.$ $U_{t,R^{\prime}}$ is monotone in
$x_{9}$ on the boundary of $C_{R^{\prime},+\infty}.$ Then one can show that
$U_{t,R^{\prime}}$ is monotone in $x_{9}$ in $C_{R^{\prime},+\infty}$, with
\[
U_{t,R^{\prime}}\left(  0\right)  =v\left(  0\right)  .
\]

We claim that the map $t\rightarrow\partial_{x_{9}}U_{t,R^{\prime}}\left(
0\right)  $ is a continuous map. We first show that it is continuous at the
points where $t\neq1.$ In this case, let $t_{n}\rightarrow t.$ Then the
sequence $U_{t_{n},R^{\prime}}$ converges to a monotone solution $W.$ This $W$
must be equal to $U_{t,R^{\prime}}.$ Indeed, since $w$ and $U_{t,R^{\prime}}$
are equal to each other on the boundary of the cylinder and the boundary value
are monotone in the $x_{9}$ direction, we can infer that $W\geq U_{t,R^{\prime
}}$ and $W\leq U_{t,R^{\prime}}$ by the sliding method.

The continuity at $t=1$ also follows from similar arguments as that of
Jerison-Monneau \cite{JM}. The proof is thus completed.
\end{proof}

\section{Solutions from Catenoids}

In this section, we shall construct solutions of the free boundary problem
starting from another type of minimal surfaces---Catenoids. Since most of the
arguments are similar to the Simons' cone case, we will only sketch the proof
and point out the difference if necessary.

We remark that it is possible to do the construction for more general minimal
surfaces, but this is beyond the scope of this paper.

\subsection{The geometry of the catenoids}

To begin with, let us choose an \textquotedblleft arc-length\textquotedblright%
\ parametrization for the catenoid, this choice of coordinate will simplify
the computation. Let $\left(  x_{1},...,x_{n}\right)  $ be the coordinate in
$\mathbb{R}^{n}.$ Let $\left(  r,\theta\right)  $ be the polar coordinate in
$\mathbb{R}^{n-1},$ where $\theta$ is the coordinate on the unit sphere
$S^{n-2}$ in $\mathbb{R}^{n-1}.$ As we mentioned before, the generalized
catenoid $\mathcal{C}_{\varepsilon}$ in $\mathbb{R}^{n}$ can be described by
\[
x_{n}=\bar{\omega}_{\varepsilon}\left(  r\right)  ,r\in\lbrack r_{0}%
,+\infty).
\]
Introduce
\[
l=l\left(  r\right)  :=\int_{r_{0}}^{r}\sqrt{1+\bar{\omega}_{\varepsilon
}^{\prime}\left(  s\right)  ^{2}}ds.
\]
Then locally the catenoid can also be described by the coordinate $\left(
l,\theta\right)  .$ We would like to write the Laplacian-Beltrami operator
$\Delta_{\mathcal{C}_{\varepsilon}}$ on $\mathcal{C}_{\varepsilon}$ in this
coordinate$.$ In the $\left(  r,\theta\right)  $ variable, the metric tensor
on $\mathcal{C}$ is given by
\[
\left[  1+\bar{\omega}_{\varepsilon}^{\prime}\left(  r\right)  ^{2}\right]
dr^{2}+r^{2}d\theta^{2}.
\]
It follows that the metric $\mathtt{g}$ in the $\left(  l,\theta\right)  $
coordinate is $dl^{2}+r^{2}d\theta^{2}.$ Observe that $\det\mathtt{g}%
=r^{2\left(  n-2\right)  }$. For rotationally symmetric function
$\varphi=\varphi\left(  l\right)  ,$ the Laplacian-Beltrami operator is given
by
\begin{align}
\Delta_{\mathcal{C}_{\varepsilon}}\varphi &  =\frac{1}{\sqrt{\det\mathtt{g}}%
}\partial_{i}\left(  \sqrt{\det\mathtt{g}}\mathtt{g}^{ij}\partial_{j}%
\varphi\right) \nonumber\\
&  =\varphi^{\prime\prime}\left(  l\right)  +\frac{n-2}{r}\varphi^{\prime
}\left(  l\right) \nonumber\\
&  =\varphi^{\prime\prime}\left(  l\right)  +O\left(  \frac{\varepsilon
}{1+\varepsilon l}\right)  \varphi^{\prime}\left(  l\right)  . \label{lap2}%
\end{align}

Using $s$ to denote the signed distance of a point $P$ to $\mathcal{C}%
_{\varepsilon}.$ Then we can write
\[
P=X+s\nu\left(  X\right)  ,
\]
where $X=X\left(  l,\theta\right)  $ designates a point on the $\mathcal{C}%
_{\varepsilon}$, $\nu\left(  \cdot\right)  $ is the unit normal of
$\mathcal{C}_{\varepsilon}$ at $X.$ We also put
\[
\Gamma_{s}:=\left\{  X+s\nu\left(  X\right)  :X\in\mathcal{C}_{\varepsilon
}\right\}  .
\]
Note that actually $\Gamma_{s}$ depends on $\varepsilon,$ although it is not
explicit in the notation. To understand the Laplacian-Beltrami operator
$\Delta_{\Gamma_{s}}$, we need to analyze the metric on the surface
$\Gamma_{s}.$ Let $\nu_{1}=\partial_{l}\nu,\nu_{2}=\partial_{\theta}\nu,$ and
$X_{1}=\partial_{l}X,X_{2}=\partial_{\theta}X.$ Define the matrix
$B_{0}=\left[  X_{1}+s\nu_{1},X_{2}+s\nu_{2}\right]  $ and
\[
B:=\left[  X_{1}+s\nu_{1},X_{2}+s\nu_{2},v\right]  .
\]
Then the matrix of the induced metric $\mathfrak{g}$ in a tubular
neighbourhood of $\mathcal{C}$ in $\left(  l,\theta,s\right)  $ coordinate has
the form
\[
B^{T}B=\left[
\begin{array}
[c]{cc}%
B_{0}^{T}B_{0} & 0\\
0 & 1
\end{array}
\right]  .
\]
For more details, we refer to \cite{DW}.

\subsection{Proof of Theorem \ref{catenoid}}

In this part, we sketch the proof of Theorem \ref{catenoid}.

Let $h_{-1},h_{1}\in C_{loc}^{2,\alpha}\left(  \mathcal{C}_{\varepsilon
}\right)  ,$ small in certain sense. As before, define an approximate solution
$w_{h}$ in $\Omega_{h},$ which is a region trapped between $\Gamma_{-1+h_{-1}%
}$ and $\Gamma_{1+h_{1}}:$
\[
w_{h}\left(  s,l\right)  =\frac{s-g\left(  l\right)  }{1+f\left(  l\right)
},
\]
where
\[
f=\frac{h_{1}-h_{-1}}{2},g=\frac{h_{1}+h_{-1}}{2}.
\]
Still set
\[
t=\frac{s-g\left(  l\right)  }{1+f\left(  l\right)  }.
\]
The solution $u$ we are looking for will have the form $u=w_{h}+\phi.$

We have the same formulas as in Lemma $\ref{l1},$ Lemma \ref{l2} and
Lemma \ref{l3} and will not restate them in this section again.

\begin{lemma}
We have the following estimate for the Laplacian operator acting on functions
depending on $s$ and $l:$%
\[
\Delta_{\Gamma_{0}}\eta- \partial_{l}^{2}\eta=O\left(  \frac{\varepsilon
}{1+\varepsilon l}\right)  \partial_{l}\eta,
\]
and%
\[
\Delta_{\Gamma_{s}}\eta-\Delta_{\Gamma_{0}}\eta=O\left(  \frac{\varepsilon
^{2}}{\left(  1+\varepsilon l\right)  ^{2}}\right)  \partial_{l}\eta+O\left(
\frac{\varepsilon}{1+\varepsilon l}\right)  \partial_{l}^{2}\eta.
\]

\end{lemma}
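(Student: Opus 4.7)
The plan is to mimic the Simons cone proof of Lemma \ref{Lap}, using the catenoid computations already laid out in the preceding subsection. The first estimate is almost immediate from equation (\ref{lap2}): since we are looking at functions of $s$ and $l$ only (i.e.\ rotationally symmetric in $\theta$), and since $\Gamma_0 = \mathcal{C}_\varepsilon$, we have
$$\Delta_{\Gamma_0}\eta = \partial_l^2\eta + \tfrac{n-2}{r(l)}\,\partial_l\eta,$$
so the entire task reduces to verifying $\frac{1}{r(l)} = O\bigl(\frac{\varepsilon}{1+\varepsilon l}\bigr)$. For this I would use the homothety identity $\mathcal{C}_\varepsilon = \varepsilon^{-1}\mathcal{C}_1$: at the waist $r(0) = r_0/\varepsilon$, and for large $l$ the asymptotics $x_n \sim c_n - c_n' r^{3-n}$ for $\mathcal{C}_1$ imply $r(l)/l \to 1$ on $\mathcal{C}_1$, hence $r(l) \sim l$ on $\mathcal{C}_\varepsilon$ for $\varepsilon l$ large. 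Interpolating these two regimes gives the required bound uniformly in $l$.

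For the second estimate, I would carry over line by line the computation from the proof of Lemma \ref{Lap}. Using the tubular-neighbourhood metric identity $\sqrt{\det\mathfrak{g}_s} = \sqrt{\det\mathfrak{g}_0}\prod_{i=1}^{n-2}(1-k_i s)$ recalled in the preceding subsection, one expands
$$\Delta_{\Gamma_s}\eta-\Delta_{\Gamma_0}\eta = \partial_l\!\Bigl(\ln\!\prod_i(1-k_i s)\Bigr)\mathfrak{g}_s^{1,1}\partial_l\eta + \partial_l\bigl(\ln\sqrt{\det\mathfrak{g}_0}\bigr)\bigl(\mathfrak{g}_s^{1,1}-\mathfrak{g}_0^{1,1}\bigr)\partial_l\eta + \partial_l\bigl((\mathfrak{g}_s^{1,1}-\mathfrak{g}_0^{1,1})\partial_l\eta\bigr).$$
The first piece contributes the $O(\varepsilon/(1+\varepsilon l))\partial_l^2\eta$ factor (after pulling one $\partial_l$ onto $\eta$), while the other two produce the $O(\varepsilon^2/(1+\varepsilon l)^2)\partial_l\eta$ remainder, provided one has the appropriate pointwise control on the $k_i$'s and their arc-length derivatives.

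The one nontrivial input is the curvature bound
$$|k_i(l)| = O\!\Bigl(\tfrac{\varepsilon}{1+\varepsilon l}\Bigr),\qquad \bigl|\tfrac{dk_i}{dl}(l)\bigr| = O\!\Bigl(\tfrac{\varepsilon^2}{(1+\varepsilon l)^2}\Bigr),$$
on $\mathcal{C}_\varepsilon$, which I expect to be the main (though still routine) obstacle. I would extract it by scaling from $\mathcal{C}_1$: a direct computation from the defining ODE $\omega''/(1+\omega'^2) = (n-2)/\omega$ shows that on $\mathcal{C}_1$ the principal curvatures are bounded near the waist and decay like $r^{1-n}$ at infinity, with an analogous one-derivative-better bound for $dk_i/dl$; the homothety $\mathcal{C}_\varepsilon = \varepsilon^{-1}\mathcal{C}_1$ scales curvatures by $\varepsilon$, so both estimates follow (with considerable room to spare at infinity, since $\varepsilon^{n-2}l^{1-n}$ is far smaller than $\varepsilon/(1+\varepsilon l)$ once $n \geq 3$). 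Substituting these estimates into the expansion above yields the claimed bounds exactly as in the Simons cone case.
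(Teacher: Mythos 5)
Your proposal follows essentially the same route as the paper, which simply notes that the first identity is the computation already recorded in (\ref{lap2}) and that the second is proved exactly as Lemma \ref{Lap}, i.e.\ by the determinant expansion $\sqrt{\det\mathfrak{g}_s}=\sqrt{\det\mathfrak{g}_0}\prod_i(1-k_i s)$ together with the curvature and curvature-derivative bounds obtained by scaling from $\mathcal{C}_1$, all of which you spell out. Two cosmetic slips do not affect the argument: the $O\!\left(\frac{\varepsilon}{1+\varepsilon l}\right)\partial_l^2\eta$ term actually comes from the third piece $\partial_l\bigl((\mathfrak{g}_s^{1,1}-\mathfrak{g}_0^{1,1})\partial_l\eta\bigr)$ (the first piece only contributes to the $\partial_l\eta$ remainder), and the large-$l$ curvature size on $\mathcal{C}_\varepsilon$ is $\varepsilon^{2-n}l^{1-n}=\varepsilon(\varepsilon l)^{1-n}$ rather than $\varepsilon^{n-2}l^{1-n}$.
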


\begin{proof}
The first equation has already been proved in $\left(  \ref{lap2}\right)  .$
The proof of the second equation is same as that of Lemma \ref{Lap}.
\end{proof}

Let us introduce the functional framework to work with. Let $\alpha\in\left(
0,1\right)  $ be a fixed constant.

\begin{definition}
For $\mu=0,1,2,$ $\beta\geq0,\delta>0,$ the space $\mathcal{E}_{\beta
,\mu;\delta}$ consists of those functions $\eta$ defined on $\mathcal{C}%
_{\delta}$ such that
\[
\sup_{\left\vert z\right\vert =l}\left[  \left(  1+\delta l\right)  ^{\beta
}\left\Vert \eta\right\Vert _{C^{\mu,\alpha}\left(  S_{\delta}\cap
B_{1}\left(  z\right)  \right)  }\right]  <+\infty.
\]
\end{definition}

Same as before, we also regard $\phi$ as the restriction of a function
$\mathcal{T}\left(  \phi\right)  $ on $\Xi:=\left[  -1,1\right]
\times\mathbb{[}0,+\infty),$ where $\mathcal{T}\left(  \phi\right)  $ is a
function of $t$ and $l$ defined for $\left(  t,l\right)  \in\bar{\Xi}:=\left[
-1,1\right]  \times\mathbb{R}$, even in the variable $l.$

\begin{definition}
For $\mu=0,1,2,$ $\beta\geq0$, the space $\mathcal{E}_{\beta,\mu;\ast}$
consists of those functions $\phi$ such that
\[
\left\Vert \phi\right\Vert _{\beta,\mu;\ast}:=\sup_{l\in\mathbb{R};z\in
\bar{\Xi},\left\vert z\right\vert =\left\vert l\right\vert }\left[  \left(
1+\varepsilon\left\vert l\right\vert \right)  ^{\beta}\left\Vert
\mathcal{T}\left(  \phi\right)  \right\Vert _{C^{\mu,\alpha}\left(  \bar{\Xi
}\cap B_{1}\left(  z\right)  \right)  }\right]  <+\infty.
\]

\end{definition}

Let $v\left(  \cdot\right)  $ be an even smooth function such that%
\[
v\left(  l\right)  =\left\{
\begin{array}
[c]{l}%
\left\vert l\right\vert ^{3-n},\left\vert l\right\vert >2,\\
0,\left\vert l\right\vert <1.
\end{array}
\right.
\]
The one dimensional space spanned by this function will be denoted by
$\mathcal{D}.$ Let $\bar{g}\left(  \cdot\right)  =g\left(  \frac{\cdot
}{\varepsilon}\right)  .$ If $n\geq4,$ we shall assume a priori $\bar{g}%
\in\mathcal{E}_{2n-6,2;1}\oplus\mathcal{D},$ $f\in\mathcal{E}%
_{2n-4,2;\varepsilon},$ with $\left\Vert \bar{g}\right\Vert _{\mathcal{E}%
_{2n-6,2;1}\oplus\mathcal{D}}\leq C\varepsilon,\left\Vert f\right\Vert
_{2n-4,2;\varepsilon}\leq C\varepsilon^{2}$. For notational simplicity, the
norm of $\mathcal{E}_{2n-6,2;1}\oplus\mathcal{D}$ will be denoted by
$\left\Vert \cdot\right\Vert .$ In the case $n=3,$ we assume $\bar{g}%
\in\mathcal{E}_{2,2;1}\oplus\mathcal{D},$ $f\in\mathcal{E}_{4,2;\varepsilon},$
with $\left\Vert \bar{g}\right\Vert _{\mathcal{E}_{2,2;1}\oplus\mathcal{D}%
}\leq C\varepsilon,$ $\left\Vert f\right\Vert _{4,2;\varepsilon}\leq
C\varepsilon^{2},$ and in this case, the norm of $\mathcal{E}_{2,2;1}%
\oplus\mathcal{D}$ will also be denoted by $\left\Vert \cdot\right\Vert .$

With these choice of function spaces, we can verify that $\left\Vert \Delta
w\right\Vert _{2n-4,2;\ast}\leq C\varepsilon^{2}$ if $n\geq4;$ while
$\left\Vert \Delta w\right\Vert _{4,2;\ast}\leq C\varepsilon^{2}$ if $n=3.$

Recall that the Jacobi operator on $\mathcal{C}_{\delta}$ is given by
\[
J_{\mathcal{C}_{\delta}}\left(  \eta\right)  =\Delta_{\mathcal{C}_{\delta}%
}\eta+\left\vert A\right\vert ^{2}\eta.
\]
Here $\left\vert A\right\vert ^{2}=\sum k_{i}^{2}$ is the squared norm of the
second fundamental form. Using the asymptotic behavior of $\bar{\omega},$ we
deduce $\left\vert A\right\vert ^{2}=O\left(  \frac{1}{\left(  1+l\right)
^{2n-2}}\right)  $ as $l\rightarrow+\infty.$ We need the following lemma,
which states that the Jacobi operator on the catenoid $\mathcal{C}_{1}$ is
invertible in suitable functional spaces.

\begin{lemma}
For each function $\xi\in\mathcal{E}_{2n-4,2;1},$ there is a solution $\eta
\in\mathcal{E}_{2n-6,2;1}\oplus\mathcal{D}$ such that%
\[
J_{\mathcal{C}_{1}}\left(  \eta\right)  =\xi,
\]
with%
\[
\left\Vert \eta\right\Vert \leq C\left\Vert \xi\right\Vert _{2n-4,0;1}.
\]

\end{lemma}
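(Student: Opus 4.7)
The plan is to exploit the $O(n-1)$-rotational symmetry to reduce $J_{\mathcal{C}_{1}}\eta = \xi$ to a weighted second-order linear ODE in the arc-length parameter $l\in[0,+\infty)$, and then invert it by variation of parameters. For rotationally symmetric $\eta$ the equation reads, by $(\ref{lap2})$,
\[
\eta''(l) + \frac{(n-2)r'(l)}{r(l)}\,\eta'(l) + |A|^{2}(l)\,\eta(l) = \xi(l),
\]
with $r(l)\sim l$ and $|A|^{2}(l)=O(l^{-(2n-2)})$ as $l\to+\infty$, so that the operator is a relatively compact perturbation of the Euclidean radial Laplacian $\partial_{l}^{2}+\frac{n-2}{l}\partial_{l}$. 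The latter admits fundamental solutions $1$ and $l^{3-n}$ (with $\log l$ replacing $l^{3-n}$ in the critical case $n=3$). By standard asymptotic ODE analysis, the full homogeneous equation $J_{\mathcal{C}_{1}}\psi=0$ therefore has two independent solutions $\psi_{+}$ and $\psi_{-}$ with $\psi_{+}(l)=1+o(1)$ and $\psi_{-}(l)=l^{3-n}(1+o(1))$ at infinity; the nondecaying mode $\psi_{+}$ is, modulo a compactly supported correction, the Jacobi field generated by translating $\mathcal{C}_{1}$ along its axis of rotation, and this is the model for the direction $\mathcal{D}$.

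Next I would invert via variation of parameters. Using the Wronskian $W(\psi_{+},\psi_{-})(l)\asymp l^{2-n}$, one writes an explicit integral particular solution
\[
\eta_{p}(l) = \psi_{-}(l)\int_{0}^{l}\frac{\psi_{+}(s)\,\xi(s)\,r(s)^{n-2}}{W(s)}\,ds \;-\;\psi_{+}(l)\int_{l}^{+\infty}\frac{\psi_{-}(s)\,\xi(s)\,r(s)^{n-2}}{W(s)}\,ds.
\]
The assumed decay $|\xi(s)|\le \|\xi\|_{2n-4,0;1}(1+s)^{-(2n-4)}$ makes both integrals absolutely convergent, and a direct estimate yields $\eta_{p}(l)=c\,v(l)+O(l^{-(2n-6)})$ for some constant $c$ with $|c|\le C\|\xi\|_{2n-4,0;1}$, where $v$ spans $\mathcal{D}$. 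This gives the decomposition $\eta=\eta_{1}+cv\in\mathcal{E}_{2n-6,2;1}\oplus\mathcal{D}$ with the required pointwise bound. The $C^{2,\alpha}$ part of the norm then follows from interior Schauder estimates applied to $J_{\mathcal{C}_{1}}\eta=\xi$ on unit balls $B_{1}(z)\subset\mathcal{C}_{1}$, using smoothness of the coefficients. For non-rotationally symmetric data one decomposes $\xi$ into spherical harmonics $Y_{k}$ on the $S^{n-2}$-factor; the $k$-mode satisfies an ODE with an additional $-\frac{k(k+n-3)}{r^{2}}$ term whose homogeneous solutions decay strictly faster than $\psi_{\pm}$ for $k\geq 1$, so no $\mathcal{D}$-correction is required for these modes and the estimates are uniform in $k$ by orthogonality, summing to the full norm bound.

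The main obstacle is precisely the presence of the bounded nondecaying Jacobi mode $\psi_{+}$: the operator $J_{\mathcal{C}_{1}}$ is \emph{not} surjective from $\mathcal{E}_{2n-6,2;1}$ onto $\mathcal{E}_{2n-4,0;1}$ alone, because any purely fast-decaying particular solution would carry a nontrivial projection along $\psi_{+}$ incompatible with the decay rate $(1+l)^{-(2n-6)}$. Enlarging the target of the inverse to $\mathcal{E}_{2n-6,2;1}\oplus\mathcal{D}$ absorbs exactly this obstruction by allowing a slowly decaying component proportional to $v$; this is the direct analogue of how the axial-translation Jacobi field is handled in the catenoid moduli analysis of Tam \cite{Tam}, and parallels Lemma \ref{lem inverse Jacobian op} for the Simons cone. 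A secondary technical point is the behavior at $l=0$, where two sheets of the catenoid meet: the evenness of $v$ in $l$ and the Neumann condition $\eta'(0)=0$ coming from rotational symmetry allow a clean reduction to the one-sided ODE analyzed above, after which the estimate is propagated across $l=0$ by standard elliptic regularity.
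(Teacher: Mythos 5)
Your proposal follows essentially the same route as the paper, whose proof simply invokes the variation-of-parameters analysis of the Jacobi operator on the higher-dimensional catenoid from \cite{Oscar}: reduce to the radial ODE in $l$, invert by an explicit variation-of-parameters formula, absorb the slowly decaying $|l|^{3-n}$ component into $\mathcal{D}$, and get the H\"older part of the norm from Schauder estimates. One slip in your narrative: $\mathcal{D}$ is spanned by $v\sim|l|^{3-n}$, so it models the decaying mode $\psi_{-}$ rather than the bounded mode $\psi_{+}$ (and for even, rotationally symmetric functions the globally regular bounded Jacobi field comes from dilation, the axial-translation field being odd in $l$), but this does not affect your integral construction, which correctly produces $\eta=c\,v+O\left(l^{6-2n}\right)$ with $|c|\leq C\left\Vert \xi\right\Vert _{2n-4,0;1}$.
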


\begin{proof}
Detailed analysis of the Jacobi operator on the higher dimensional catenoid
can be found in \cite{Oscar}. The proof of this Lemma follows from similar
arguments there. The basic idea is using variation of parameter formula to get
the desired estimates.
\end{proof}

With this functional framework at hand, we now deal with the corresponding
linear theory for our nonlinear problem. Given functions $\gamma_{1}%
,\gamma_{-1},$ consider the problem
\begin{equation}
\left\{
\begin{array}
[c]{l}%
\partial_{t}^{2}\phi+\partial_{l}^{2}\phi=J_{\Gamma_{0}}g+\left(
\Delta_{\Gamma_{0}}f+\left\vert A\right\vert ^{2}\right)  t,\text{ in }%
\Omega_{h},\\
\phi=0\text{ on }\partial\Omega_{h},\\
\partial_{t}\phi-f=\gamma_{-1},\text{on }\Gamma_{-1+h_{-1}},\\
\partial_{t}\phi-f=\gamma_{1},\text{ on }\Gamma_{1+h_{1}}.
\end{array}
\right.  \label{clinear}%
\end{equation}

\begin{proposition}
Suppose $\gamma_{1}\pm\gamma_{-1}$ is in $\mathcal{E}_{2n-4,1;\varepsilon}$
for $n\geq4$ and in $\mathcal{E}_{4,1;\varepsilon}$ for $n=3.$ Then the system
$\left(  \ref{clinear}\right)  $ has a solution $\left(  f,\bar{g}\right)  $
such that
\begin{align*}
\left\Vert f\right\Vert _{2n-4,2;\varepsilon}  &  \leq C\left\Vert \gamma
_{1}+\gamma_{-1}\right\Vert _{2n-4,1;\varepsilon}+C\left\Vert \left\vert
A\right\vert ^{2}\right\Vert _{2n-4,1;\varepsilon},n\geq4,\\
\left\Vert f\right\Vert _{2n-4,2;\varepsilon}  &  \leq C\left\Vert \gamma
_{1}+\gamma_{-1}\right\Vert _{4,1;\varepsilon}+C\left\Vert \left\vert
A\right\vert ^{2}\right\Vert _{4,1;\varepsilon},n=3,
\end{align*}
and
\begin{align*}
\left\Vert \bar{g}\right\Vert  &  \leq C\varepsilon^{-2}\left\Vert \gamma
_{1}-\gamma_{-1}\right\Vert _{2n-4,1;\varepsilon},n\geq4,\\
\left\Vert \bar{g}\right\Vert  &  \leq C\varepsilon^{-2}\left\Vert \gamma
_{1}-\gamma_{-1}\right\Vert _{4,1;\varepsilon},\text{ }n=3.
\end{align*}

\end{proposition}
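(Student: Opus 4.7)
The argument will parallel Proposition \ref{L2}, and I will focus only on the places where the catenoid geometry forces a change. First I would pass to the rescaled catenoid $\mathcal{C}_1$ and apply the $(n-1)$-dimensional radial Fourier transform in the $l$-variable (using the even extension of rotationally symmetric functions of $l\in[0,\infty)$ to view them as radial functions on $\mathbb{R}^{n-1}$). Since the principal part of $\Delta_{\Gamma_0}$ on rotationally symmetric $\eta$ is $\partial_l^2\eta + \tfrac{n-2}{r}\partial_l\eta$, this transform diagonalizes the model operator. The system (\ref{clinear}) then reduces, frequency by frequency $\xi\in\mathbb{R}^{n-1}$, to
\begin{equation*}
\partial_t^2\hat\phi-|\xi|^2\hat\phi=(J_{\Gamma_0}g)^{\wedge}+(\Delta_{\Gamma_0}f+|A|^2)^{\wedge}t,\qquad t\in[-1,1],
\end{equation*}
with $\hat\phi(\pm 1,\xi)=0$ and $\partial_t\hat\phi(\pm 1,\xi)-\hat f(\xi)=\hat\gamma_{\pm 1}(\xi)$. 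Exactly as in the Simons case, the solution decomposes as $\hat\phi=(J_{\Gamma_0}g)^{\wedge}p_{1,\xi}+(\Delta_{\Gamma_0}f+|A|^2)^{\wedge}p_{2,\xi}$ with $p_{1,\xi}$ even and $p_{2,\xi}$ odd (Lemma \ref{pi}), and adding/subtracting the two Neumann conditions decouples the system into the analog of (\ref{fg1}):
\begin{equation*}
(J_{\Gamma_0}g)^{\wedge}=\frac{\hat\gamma_1-\hat\gamma_{-1}}{2p'_{1,\xi}(1)},\qquad (\Delta_{\Gamma_0}f+|A|^2)^{\wedge}=\frac{2\hat f+\hat\gamma_{-1}+\hat\gamma_1}{2p'_{2,\xi}(1)}.
\end{equation*}

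Next I would handle the two equations in turn. For $g$, the multiplier $1/p'_{1,\xi}(1)$ behaves like $|\xi|$ at infinity and is analytic, and the inverse Fourier transform of $|\xi|$ on $\mathbb{R}^{n-1}$ is a Riesz kernel $c|x|^{-n}$, so a pointwise potential-theoretic estimate, split in the three regions $|z-y|>|z|/2$, $1<|z-y|<|z|/2$, $|z-y|<1$ exactly as in (\ref{c01})--(\ref{c3}), gives a gain of one order of weighted decay. Plugging the result into the catenoid Jacobi invertibility lemma yields $\bar g \in \mathcal{E}_{2n-6,2;1}\oplus\mathcal{D}$ with the claimed bound, the factor $\varepsilon^{-2}$ being exactly the rescaling of $J_{\mathcal{C}_\varepsilon}$ to $J_{\mathcal{C}_1}$. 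For $f$, I would rewrite the second equation as
\begin{equation*}
\hat f(\xi)=-\frac{(|A|^2)^{\wedge}}{|\xi|^2-1/p'_{2,\xi}(1)}+\frac{\hat\gamma_{-1}+\hat\gamma_1}{2(|\xi|^2p'_{2,\xi}(1)-1)},
\end{equation*}
invert the two Fourier multipliers, and use the asymptotics $1/(|\xi|^2-1/p'_{2,\xi}(1))\sim |\xi|^{-2}$ and $1/(|\xi|^2 p'_{2,\xi}(1)-1)\sim |\xi|^{-1}$ for large $|\xi|$ (the second kernel has a $|x|^{-(n-2)}$ singularity at the origin). The weighted estimates in $\mathcal{E}_{2n-4,2;\varepsilon}$ (resp.\ $\mathcal{E}_{4,2;\varepsilon}$ for $n=3$) then follow from routine potential theory, with a perturbation argument based on the preceding Laplacian lemma absorbing the difference between $\Delta_{\Gamma_0}$ and $\partial_l^2$.

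The main obstacle is the presence of the one-dimensional space $\mathcal{D}$ in the target space for $\bar g$. The Jacobi operator on the entire catenoid $\mathcal{C}_1$ has nontrivial bounded Jacobi fields generated by vertical translation of $\mathcal{C}_1$, which decay precisely like $l^{3-n}$, i.e.\ like the function $v$ spanning $\mathcal{D}$. Consequently, when inverting $J_{\mathcal{C}_1}$ against a right-hand side with the fast decay produced above, the natural solution is of the form "faster-decaying term in $\mathcal{E}_{2n-6,2;1}$ plus a scalar multiple of $v$", and one has to pin down this scalar by a solvability/orthogonality condition against the Jacobi field. A secondary delicate point is the borderline case $n=3$: here $|A|^2$ only decays like $l^{-4}$ and the Riesz kernel $|x|^{-1}$ is only mildly integrable, so the weight exponent $\beta=4$ is critical and the potential-theory estimates must be run carefully so that the bound closes with constants independent of $\varepsilon$.
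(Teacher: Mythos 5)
There is a genuine gap in your reduction step. You take the model operator on the catenoid to be $\partial_{l}^{2}+\frac{n-2}{r}\partial_{l}$ and claim that the $(n-1)$-dimensional radial Fourier transform in $l$ diagonalizes it; but that transform diagonalizes $\partial_{l}^{2}+\frac{n-2}{l}\partial_{l}$, and on $\mathcal{C}_{\varepsilon}$ the profile radius satisfies $r(l)\gtrsim \varepsilon^{-1}+l$, so the true first-order coefficient is $O\bigl(\frac{\varepsilon}{1+\varepsilon l}\bigr)$, not $\frac{n-2}{l}$. This is exactly the point where the catenoid differs from the Simons' cone (where the distance to the origin is comparable to the arclength, so the $\mathbb{R}^{4}$ radial model is correct): here the waist has radius $\varepsilon^{-1}$, the angular term is a uniformly small perturbation, and the system $\left(\ref{clinear}\right)$ you must solve is written with the pure $\partial_{t}^{2}+\partial_{l}^{2}$ on the left. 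Applying the $(n-1)$-dimensional transform to $\left(\ref{clinear}\right)$ does \emph{not} decouple it frequency by frequency into $\partial_{t}^{2}\hat{\phi}-|\xi|^{2}\hat{\phi}=\cdots$; the extra term $\frac{n-2}{l}\partial_{l}\phi$ appears, and it is not small in the weighted norms (it is $O(1)$ for $l=O(1)$ and its coefficient blows up at the waist, where the genuine coefficient $\frac{(n-2)r'(l)}{r(l)}$ vanishes), so it cannot be absorbed by the "routine perturbation" you invoke. Consequently your kernel computations ($|x|^{-n}$ and $|x|^{-(n-2)}$ Riesz kernels on $\mathbb{R}^{n-1}$) address the wrong model. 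The paper's proof instead extends everything evenly in $l$ and uses the one-dimensional Fourier transform, $\xi\in\mathbb{R}$; the multiplier $1/p_{1,\xi}^{\prime}(1)$ is then compared with $\xi\tanh\xi$ (real analytic, equal up to $O(e^{-|\xi|/2})$), whose inverse transform $-\frac{\cosh(\pi\xi)}{2\sinh^{2}(\pi\xi)}$-type kernel has an $O(x^{-2})$ singularity at the origin, and the potential-theory estimates are run with this kernel; the estimate for $f$ is then as in Proposition \ref{L2}.

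A secondary, more minor point: your discussion of the deficiency space $\mathcal{D}$ is off. No orthogonality or solvability condition against the translation Jacobi field is needed; the target space for $\bar{g}$ is taken to be $\mathcal{E}_{2n-6,2;1}\oplus\mathcal{D}$ precisely so that the slowly decaying component proportional to $v(l)\sim |l|^{3-n}$ is allowed as part of the solution, its coefficient coming out of the variation-of-parameters formula in the invertibility lemma for $J_{\mathcal{C}_{1}}$, not out of a compatibility condition on the data. Your caution about the borderline decay in $n=3$ is reasonable, but it must be carried out for the one-dimensional kernels above, not for the two-dimensional Riesz kernel.
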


\begin{proof}
By even reflection, we can regard $\left(  \ref{clinear}\right)  $ as a
problem in $\left(  t,l\right)  \in\left[  -1,1\right]  \times\mathbb{R}.$
Take the Fourier transform%
\[
\hat{\eta}\left(  t,\xi\right)  :=\int_{\mathbb{R}}e^{-2\pi i\xi l}\eta\left(
t,l\right)  dl.
\]
It is worth mentioning that here $\xi\in\mathbb{R},$ unlike the Simons' cone
case where the Fourier transform is taken in $\mathbb{R}^{4}.$ We are lead to the problem%
\begin{equation}
\left\{
\begin{array}
[c]{l}%
\partial_{t}^{2}\hat{\phi}-\left\vert \xi\right\vert ^{2}\hat{\phi}=\left(
J_{\mathcal{C}_{\varepsilon}}g\right)  ^{\symbol{94}}+\left(  \Delta
_{\mathcal{C}_{\varepsilon}}f+\left\vert A\right\vert ^{2}\right)
^{\symbol{94}}t,\text{ }t\in\left[  -1,1\right]  ,\\
\hat{\phi}\left(  -1,\xi\right)  =\hat{\phi}\left(  1,\xi\right)  =0,\\
\partial_{t}\hat{\phi}\left(  -1,\xi\right)  -\hat{f}\left(  \xi\right)
=\hat{\gamma}_{-1}\left(  \xi\right)  ,\\
\partial_{t}\hat{\phi}\left(  1,\xi\right)  -\hat{f}\left(  \xi\right)
=\hat{\gamma}_{1}\left(  \xi\right)  \text{.}%
\end{array}
\right.  \label{clinear1}%
\end{equation}
The solution $\hat{\phi}$ of the first equation in $\left(  \ref{clinear1}%
\right)  $ can be written in the form
\[
\hat{\phi}\left(  t,\xi\right)  =\left(  J_{\mathcal{C}_{\varepsilon}%
}g\right)  ^{\symbol{94}}p_{1,\xi}\left(  t\right)  +\left(  \Delta
_{\mathcal{C}_{\varepsilon}}f+\left\vert A\right\vert ^{2}\right)
^{\symbol{94}}p_{2,\xi}\left(  t\right)  .
\]
This implies that
\[
\left\{
\begin{array}
[c]{l}%
\left(  J_{\mathcal{C}_{\varepsilon}}g\right)  ^{\symbol{94}}=\ \frac
{\hat{\gamma}_{1}\left(  \xi\right)  -\hat{\gamma}_{-1}\left(  \xi\right)
}{2p_{1,\xi}^{\prime}\left(  1\right)  },\\
\left(  \Delta_{\mathcal{C}_{\varepsilon}}f+\left\vert A\right\vert
^{2}\right)  ^{\symbol{94}}=\frac{2\hat{f}\left(  \xi\right)  +\hat{\gamma
}_{-1}\left(  \xi\right)  +\hat{\gamma}_{1}\left(  \xi\right)  }{2p_{2,\xi
}^{\prime}\left(  1\right)  }.
\end{array}
\right.
\]
Observe that $\frac{1}{p_{1,\xi}^{\prime}\left(  1\right)  }-\xi\tanh\xi$ is
real analytic and of the order $O\left(  e^{-\frac{\left\vert \xi\right\vert
}{2}}\right)  $ as $\left\vert \xi\right\vert \rightarrow+\infty.$\ According
to the proof of Lemma \ref{L2}, one need to estimate the inverse Fourier
transform of $\xi\tanh\xi\left[  \hat{\gamma}_{1}\left(  \xi\right)
-\hat{\gamma}_{-1}\left(  \xi\right)  \right]  .$ To do this, we can apply the
fact that the Fourier transform of $x\tanh\left(  \pi x\right)  $ is equal to
$-\frac{\cosh\left(  \pi\xi\right)  }{2\sinh^{2}\left(  \pi\xi\right)  }$,
which has a singularity of order $O\left(  \xi^{-2}\right)  $ near the origin.
The estimate of $f$ is similar as before.
\end{proof}

Once we have established the functional framework and the linear solvability
theory, we can proceed in the same way as the Simons' cone case.

\end{document}